\theoremstyle{plain}
\newtheorem{cor}{Corollary}
\newtheorem{prop}[cor]{Proposition}
\newtheorem{thm}[cor]{Theorem}
\newtheorem*{thm*}{Theorem}
\theoremstyle{definition}
\newtheorem{remark}[cor]{Remark}
\numberwithin{cor}{section}
\numberwithin{equation}{section}
\DeclareMathOperator{\C}{C}
\DeclareMathOperator{\Supp}{Supp}
\newcommand{\E}{\mathbb{E}}
\renewcommand{\d}{\delta}
\renewcommand{\and}{\quad\textrm{ and }\quad}
\renewcommand{\P}{\mathbb{P}}
\renewcommand{\a}{\alpha}
\renewcommand{\o}{\omega}
\renewcommand{\O}{\Omega}
\newcommand{\F}{\mathcal F}
\newcommand{\Exc}{\textrm{Exc}}
\newcommand{\R}{\mathbb{R}}
\newcommand{\N}{\mathbb{N}}
\newcommand{\norm}[1]{\left\| #1 \right\|}
\newcommand{\ve}{\varepsilon}
\newcommand{\abs}[1]{\left|#1\right|}
\providecommand{\ud}[1]{\, \mathrm{d} #1}
\providecommand{\dx}{\ud{x}}
\providecommand{\dy}{\ud{y}}
\providecommand{\dr}{\ud{r}}
\providecommand{\ds}{\ud{s}}
\providecommand{\dt}{\ud{t}}
\providecommand{\dd}{\ud}
\def\XXint#1#2#3{{\setbox0=\hbox{$#1{#2#3}{\int}$ }
\vcenter{\hbox{$#2#3$ }}\kern-.6\wd0}}
\begin{document}

\title{Large-scale regularity in stochastic homogenization with divergence-free drift}

\author{Benjamin Fehrman}

\begin{abstract}
We provide a simple proof of quenched stochastic homogenization for random environments with a mean zero, divergence-free drift under the assumption that the drift admits a stationary $L^d$-integrable stream matrix in $d\geq 3$ or an $L^{2+\d}$-integrable stream matrix in $d=2$.  In addition, we prove that the environment almost surely satisfies a large-scale H\"older regularity estimate and first-order Liouville principle.
\end{abstract}

\maketitle

\section{Introduction}

In this paper, we prove the quenched homogenization of the equation
\begin{equation}\label{intro_eq}-\nabla\cdot a(\nicefrac{x}{\ve},\o)\nabla u^\ve+\frac{1}{\ve}b(\nicefrac{x}{\ve},\o)\cdot \nabla u^\ve=f\;\;\textrm{in}\;\;U\;\;\textrm{with}\;\;u^\ve=g\;\;\textrm{on}\;\;\partial U,\end{equation}
for a uniformly elliptic matrix $a$ and a mean zero, divergence-free drift $b$.  The coefficients are jointly-measurable, stationary, and ergodic random variables defined on some probability space $(\O,\F,\P)$.  Stationarity asserts that the random environment is statistically homogenous:  there exists a measure preserving transformation group $\{\tau_x\colon\O\rightarrow\O\}_{x\in\R^d}$ such that
\begin{equation}\label{intro_rvs}(a(x,\omega),b(x,\omega))=(A(\tau_x\omega),B(\tau_x\omega))\;\textrm{for random variables}\;A\colon\O\rightarrow\R^{d\times d}, B\colon\O\rightarrow\R^d.\end{equation}
The ergodicity is a qualitative form of mixing:  for $g\in L^\infty(\O)$,
\begin{equation}\label{intro_ergodic}
g(\omega)=g(\tau_x\omega)\;\;\textrm{almost surely for every}\;\;x\in\R^d\;\;\textrm{if and only if}\;\;g\;\;\textrm{is almost surely constant.}
\end{equation}
In terms of the coefficients, we will assume that the matrix $A$ is bounded and uniformly elliptic:  there exist $\lambda,\Lambda\in(0,\infty)$ such that, almost surely for every $\xi\in\R^d$,
\begin{equation}\label{intro_ue}\abs{A\xi}\leq \Lambda\abs{\xi}\;\;\textrm{and}\;\;A\xi\cdot \xi\geq \lambda\abs{\xi}^2.\end{equation}
We will assume that, for some $\d\in(0,1)$, the random drift $B$ admits a stationary $L^{d\vee(2+\d)}$-integrable stream matrix:  there exists a skew-symmetric $S\in L^{d\vee(2+\d)}(\O;\R^{d\times d})$ which satisfies
\begin{equation}\label{intro_stream} \nabla\cdot S=B\;\;\textrm{for}\;\;(\nabla\cdot S)_i=\partial_kS_{ik},\end{equation}
fixed by the choice of gauge
\begin{equation}\label{intro_gauge}-\Delta S_{ij}=\partial_iB_j-\partial_j B_i.\end{equation}
We prove in Proposition~\ref{prop_stream} below that a stream matrix exists in $d\geq 3$ if $B$ is $L^d$-integrable and satisfies a finite range of dependence.  A stationary stream matrix does not exist in general if $d=2$, and for this reason the homogenization of \eqref{intro_eq} in $d=2$ remains largely an open problem.

In the symmetric case, for sufficiently regular coefficients and a sufficiently regular domain, solutions of \eqref{intro_eq} are related by the Feynman-Kac formula to a rescaling of the stochastic differential equation
\begin{equation}\label{intro_sde}\dd X_t=\sigma\left(X_t,\omega\right)\dd B_t+\left(\nabla\cdot a^t\left(X_t,\omega\right)-b\left(X_t,\omega\right)\right)\dt,\end{equation}
for $\sigma\sigma^t=2a$.  Precisely, for the exit time $\tau^\ve$ from the dilated domain $\nicefrac{U}{\ve}$,
\[u^\ve(x)=E_{\frac{x}{\ve},\omega}\left[g\left(\ve X_{\tau^\ve}\right)+\ve^2\int_0^{\tau^\ve}f\left(\ve X_s\right)\ds\right].\]
The homogenization of \eqref{intro_eq} is therefore equivalent to characterizing the asymptotic behavior of the exit distributions and exit times of \eqref{intro_sde} from large domains.  Furthermore, in the case $A=I$, equation \eqref{intro_sde} is the passive tracer model
\[\dd X_t=\sqrt{2}\dd B_t+b\left(X_t,\omega\right)\dt,\]
which is a simple approximation for the transport of a passive tracer particle in a turbulent, incompressible flow.  This model has applications to hydrology, meteorological sciences, and oceanography, and we point the reader, for instance, to Csanady \cite{Csa1973}, Frish \cite{Fri1995}, and Monin and Yaglom \cite{MonYag2007,MonYag2007II} for more details.

The stream matrix allows equation \eqref{intro_eq} to be rewritten in the form
\begin{equation}\label{intro_transform}-\nabla\cdot \left(a^\ve+s^\ve\right)\cdot\nabla u^\ve=f\;\;\textrm{in}\;\;U\;\;\textrm{with}\;\;u^\ve=g\;\;\textrm{on}\;\;\partial U,\end{equation}
for $s^\ve(x,\omega)=S(\tau_{\nicefrac{x}{\ve}}\omega)$.  The transformation \eqref{intro_transform} formally justifies the two-scale expansion familiar from the homogenization of divergence-form equations without drift.  That is, for the standard orthonormal basis $\{e_i\}_{i\in\{1,\ldots,d\}}$ of $\R^d$, we expect the corrector $\phi_i$ to be defined by a stationary gradient $\nabla\phi_i$ that solves
\begin{equation}\label{intro_corrector}-\nabla\cdot \left(A+S\right)\left(\nabla\phi_i+e_i\right)=0,\end{equation}
and we expect the homogenized coefficient $\overline{a}\in \R^{d\times d}$ to be defined by
\[\overline{a}e_i=\E\left[(A+S)(\nabla\phi_i+e_i)\right].\]
This is the case if $S$ is bounded, for which the methods of Papanicolaou and Varadhan \cite{PapVar1981} and Osada \cite{Osa1983} yield readily that, for the solution
\begin{equation}\label{intro_homogenized}-\nabla\cdot \overline{a}\nabla v = f \;\;\textrm{in}\;\;U\;\;\textrm{with}\;\;v=g\;\;\textrm{on}\;\;\partial U,\end{equation}
we have weak $H^1$-convergence of $u^\ve$ to $v$, and strong $H^1$-convergence of the two-scale expansion in the sense that almost surely
\[\lim_{\ve\rightarrow 0}\norm{u^\ve-(v+\ve\phi^\ve_i\partial_i v)}_{H^1(U)}=0,\]
for the rescaled correctors $\phi^\ve_i=\phi_i(\nicefrac{x}{\ve},\omega)$, where here and throughout the paper we use Einstein's summation convention over repeated indices.

The case of an unbounded stream matrix $S$ is fundamentally different.  Proving the existence of a solution to \eqref{intro_corrector} is essentially straightforward, arguing by approximation and the skew-symmetry of $S$.  Uniqueness is however not clear and was posed as an open problem in Avellaneda and Majda \cite{AveMaj1991}.  The reason for this is that, while the equation defines $S\nabla\phi_i$ as an element of the dual for any solution $\nabla\phi_i$, it is not clear that this rule extends to a skew-symmetric operator on the solution space.  Issues related to this fact explain the strong regularity assumptions used in Oelschl\"ager \cite{Oel1988} and form the technical core of the more recent works Kozma and T\'oth \cite{KozTot2017} and T\'oth \cite{Tot2018}.  In this paper we take a different approach based on the methods of \cite{Oel1988}.  In Proposition~\ref{prop_corrector} below, assuming the existence of a square integrable stream matrix, we prove that there exists a unique stationary gradient $\nabla\phi_i$ satisfying \eqref{intro_corrector}.  The quenched uniqueness of correctors under the higher $L^{d\vee(2+\d)}$-integrability assumption then follows from the Liouville theorem and Proposition~\ref{prop_sublinear} below.

Our first result is the quenched homogenization of \eqref{intro_transform} under the assumptions of uniform ellipticity and the existence of a $L^{d\vee(2+\d)}$-integrable stream matrix:
\begin{equation}\label{steady}  \textrm{Assume}\;\eqref{intro_rvs},\; \eqref{intro_ergodic},\; \eqref{intro_ue},\; \eqref{intro_stream},\; \textrm{and}\; \eqref{intro_gauge}.\end{equation}
This result provides a new approach to the results of \cite{Oel1988} and \cite{AveMaj1991}.  In comparison to \cite{KozTot2017,Tot2018}, which respectively assume the existence of an $L^2$/$L^{2+\d}$-integrable stream matrix to prove an annealed/quenched central limit theorem for the analogous random walk on the whole space, the methods of this paper explain how the higher $L^{d\vee(2+\d)}$-integrability assumption arises naturally to prove the strong convergence of the two-scale expansion in $H^1$.  This is done by introducing the homogenization flux correctors $\sigma_i$ satisfying
\begin{equation}\label{intro_flux}\nabla\cdot \sigma_i = (a+s)(\nabla\phi_i+e_i)-\overline{a}e_i.\end{equation}
The flux correction was used originally in the context of stochastic homogenization by Gloria, Neukamm, and Otto \cite{GloNeuOtt2020}, and allows the residuum of the two-scale expansion to be written in the form
 \begin{equation}\label{intro_00000}-\nabla\cdot (a^\ve+s^\ve)\nabla (u^\ve-v-\ve\phi^\ve_i\partial_i v) = \nabla\cdot\left[\left(\ve\phi_i^\ve(a^\ve+s^\ve)-\ve\sigma_i^\ve\right)\nabla(\partial_i v)\right].\end{equation}
The $L^{d\vee(2+\d)}$-integrability of $S$ is exactly the threshold which guarantees that the righthand side of \eqref{intro_00000} vanishes strongly in $L^2$ as $\ve\rightarrow 0$, using the sublinearity of $\phi_i$ and $\sigma_i$ proven in Proposition~\ref{prop_sublinear} below.  These methods apply without change to the elliptic and parabolic settings, and thereby establish an invariance principle on the whole space while also characterizing asymptotically the exit times and exit distributions of \eqref{intro_sde} from large domains.

\begin{thm*}[cf.\ Theorem~\ref{thm_ts} below]  Assume \eqref{steady}.  For some $\a\in(0,1)$ let $U\subseteq\R^d$ be a bounded $\C^{2,\a}$-domain, let $f\in\C^\a(U)$, and let $g\in \C^{2,\a}(\partial U)$.  For each $\ve\in(0,1)$ let $u^\ve\in H^1(U)$ be the unique solution of \eqref{intro_eq} and let $v\in H^1(U)$ be the unique solution of \eqref{intro_homogenized}.  Then, almost surely as $\ve\rightarrow 0$,
\[\lim_{\ve\rightarrow 0}\norm{u^\ve-v-\ve\phi^\ve_i\partial_iv}_{H^1(U)}=0.\]
\end{thm*}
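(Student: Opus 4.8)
The plan is to use the flux corrector identity \eqref{intro_00000} together with the sublinearity of the correctors to show that the residuum $w^\ve := u^\ve - v - \ve\phi^\ve_i\partial_i v$ vanishes in $H^1(U)$. First I would record the standard a priori estimate: testing the equation \eqref{intro_00000} for $w^\ve$ (in which the operator $-\nabla\cdot(a^\ve + s^\ve)\nabla$ appears, with $s^\ve$ skew-symmetric so that the quadratic form still satisfies the $\lambda$-coercivity of \eqref{intro_ue}) against $w^\ve$ itself, we get
\[
\lambda\|\nabla w^\ve\|_{L^2(U)}^2 \leq \int_U \left(\ve\phi_i^\ve(a^\ve+s^\ve) - \ve\sigma_i^\ve\right)\nabla(\partial_i v)\cdot\nabla w^\ve\,\dx + (\textrm{boundary terms}),
\]
so by Young's inequality it suffices to control $\|\ve\phi_i^\ve(a^\ve+s^\ve)\nabla(\partial_i v)\|_{L^2(U)}$ and $\|\ve\sigma_i^\ve\nabla(\partial_i v)\|_{L^2(U)}$, plus to handle the fact that $w^\ve$ does not vanish on $\partial U$ because the corrector term $\ve\phi^\ve_i\partial_i v$ is nonzero there. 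For the latter I would subtract off a suitable harmonic (or $\overline a$-harmonic) extension of the boundary data $\ve\phi^\ve_i\partial_i v\big|_{\partial U}$, or equivalently use a cutoff near $\partial U$ at scale $\ve^{1/2}$; this is where the regularity hypotheses on $U$, $f$, and $g$ enter, since they give $v\in\C^{2,\a}(\ov U)$ and hence $\partial_i v\in\C^{1,\a}(\ov U)$, a fixed bounded Lipschitz function independent of $\ve$.

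The two key convergences are: (i) $\ve\phi^\ve_i \to 0$ strongly in $L^2_{\mathrm{loc}}$, which is the sublinearity of $\phi_i$ from Proposition~\ref{prop_sublinear}; and (ii) $\ve\sigma^\ve_i\to 0$ strongly in $L^2_{\mathrm{loc}}$, the sublinearity of the flux corrector $\sigma_i$, again from Proposition~\ref{prop_sublinear}. The subtlety is the product $\ve\phi^\ve_i\, s^\ve$: unlike the bounded case, $s^\ve = S(\tau_{x/\ve}\omega)$ is only $L^{d\vee(2+\d)}$-integrable, so one cannot simply bound $\phi_i^\ve s^\ve$ in $L^2$ by $L^\infty\times L^2$. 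Here I would use H\"older's inequality: write $|\ve\phi^\ve_i s^\ve\nabla(\partial_i v)| \leq \|\nabla(\partial_i v)\|_{L^\infty}\,|\ve\phi^\ve_i|\,|s^\ve|$, and control $\ve\phi^\ve_i$ in the exponent dual to $d\vee(2+\d)$ — namely in $L^{p}$ with $\tfrac1p + \tfrac1{d\vee(2+\d)} = \tfrac12$. For $d\geq 3$ this is $p = \tfrac{2d}{d-2} = 2^*$, the Sobolev exponent, so Sobolev embedding upgrades the $L^2$-sublinearity of $\ve\nabla\phi_i$ (via Poincar\'e, since $\ve\phi^\ve_i\to 0$ in $L^2$ as well) to $L^{2^*}$-control of $\ve\phi^\ve_i$; and the ergodic theorem gives $\|s^\ve\|_{L^{d}(U)}$ bounded almost surely. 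For $d=2$ the analogous computation uses $\ve\phi^\ve_i \in L^{2(2+\d)/\d}$, again available by Sobolev embedding in $d=2$. This is precisely the point the introduction flags: $L^{d\vee(2+\d)}$ is the sharp threshold making $\ve\phi^\ve_i s^\ve \to 0$ in $L^2$.

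Assembling: combining the coercivity estimate with (i), (ii), the H\"older/Sobolev bound on $\ve\phi^\ve_i s^\ve$, the boundary-layer correction, and the almost-sure boundedness of $\|s^\ve\|$, $\|a^\ve\|$, $\|\nabla\phi^\ve_i\|_{L^2(U)}$ (the latter from the ergodic theorem applied to the stationary field $\nabla\phi_i$), yields $\|\nabla w^\ve\|_{L^2(U)}\to 0$ almost surely; Poincar\'e on the boundary-corrected function then gives $\|w^\ve\|_{L^2(U)}\to 0$ as well, hence the claimed $H^1$-convergence. The main obstacle is the term involving $\ve\phi_i^\ve s^\ve$ together with the boundary layer: one must make the cutoff/extension argument near $\partial U$ compatible with the fact that $\phi_i^\ve$ is only controlled in $L^{2^*}$ and not uniformly, so the boundary strip of width $\sim\ve^{1/2}$ must be chosen so that both the strip contribution and the commutator from the cutoff are $o(1)$; once the exponents are tracked carefully this is routine, but it is the place where the proof is genuinely more delicate than in the classical bounded-coefficient case.
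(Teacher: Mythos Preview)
Your overall architecture matches the paper's: derive the flux-corrector identity for the two-scale residuum, use the skew-symmetry of $s^\ve$ to retain coercivity, split the product $\ve\phi^\ve_i s^\ve$ via H\"older with the pair $(2_*,q_d)$ where $q_d=d\vee(2+\delta)$ and $\tfrac{1}{2_*}=\tfrac12-\tfrac1{q_d}$, and invoke the $L^{2_*}$-sublinearity of $\phi_i$ and $L^2$-sublinearity of $\sigma_i$ from Proposition~\ref{prop_sublinear}. On that level the proposal is correct.

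The one genuine gap is the boundary-layer mechanism. You propose a cutoff at scale $\ve^{1/2}$ (or an extension of the boundary trace $\ve\phi^\ve_i\partial_i v$). Under the purely \emph{qualitative} ergodicity assumption \eqref{intro_ergodic}, the sublinearity statements give no rate whatsoever: one only knows $\bigl(\fint_{B_1}|\ve\phi^\ve_i|^{2_*}\bigr)^{1/2_*}=o(1)$ as $\ve\to 0$. If the cutoff width $\rho$ is coupled to $\ve$, the term $\bigl(\ve\phi^\ve_i(a^\ve+s^\ve)-\ve\sigma^\ve_i\bigr)\nabla(\eta_\rho\partial_i v)$ picks up a factor $\|\nabla\eta_\rho\|_{L^\infty}\sim\rho^{-1}$, and with $\rho=\ve^{1/2}$ you are asking for $\ve^{-1}\cdot o(1)\to 0$, which need not hold. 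The same obstruction kills the extension approach, since controlling the $H^1$-norm of an extension of $\ve\phi^\ve_i\partial_i v|_{\partial U}$ requires some quantitative smallness of the trace.

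The paper's fix is to \emph{decouple} the two parameters: introduce a cutoff $\eta_\rho$ at a \emph{fixed} scale $\rho\in(0,1)$, set $w^{\ve,\rho}=u^\ve-v-\ve\phi^\ve_i\eta_\rho\partial_i v\in H^1_0(U)$, and pass to the limit $\ve\to 0$ first. All corrector terms vanish by sublinearity and the ergodic theorem, leaving only the boundary-strip contribution
\[
\limsup_{\ve\to 0}\int_U|\nabla w^{\ve,\rho}|^2 \;\lesssim\; \rho\,\|\nabla v\|_{L^\infty}^2\,\E\bigl[|A|^2+|S|^2\bigr],
\]
which comes from $\int_U(1-\eta_\rho)^2(|a^\ve|^2+|s^\ve|^2)$ and is $O(\rho)$ by the ergodic theorem. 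Then send $\rho\to 0$. Finally one compares $w^\ve$ with $w^{\ve,\rho}$ directly, using that the difference $\nabla\phi^\ve_i(1-\eta_\rho)\partial_i v+\ve\phi^\ve_i\nabla((1-\eta_\rho)\partial_i v)$ is again $O(\rho)$ in the $\ve\to 0$ limit by the same mechanism. So the only correction needed to your sketch is to replace ``cutoff at scale $\ve^{1/2}$'' by ``cutoff at a fixed scale $\rho$, then a double limit $\ve\to 0$ followed by $\rho\to 0$.''
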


The second main result of this work is an almost sure large-scale $\a$-H\"older regularity estimate for whole space solutions $u\in H^1_{\textrm{loc}}(\R^d)$ of the equation
\begin{equation}\label{intro_whole_space}-\nabla\cdot(a+s)\nabla u=0\;\;\textrm{in}\;\;\R^d.\end{equation}
Following \cite{GloNeuOtt2020}, based on the equivalence of Morrey, Campanato, and H\"older spaces (cf.\ eg.\ Giaquinta and Martinazzi \cite{GiaMar2012}), we introduce a version of the large-scale $\a$-H\"older semi-norm defined with respect to the intrinsic $(a+s)$-harmonic coordinates $(x_i+\phi_i)$:  the excess $\Exc(u;R)$ is defined by
\begin{equation}\label{intro_excess}\Exc(u;R)=\inf_{\xi\in\R^d}\left(R^{-2\a}\fint_{B_R}\abs{\nabla u-\xi-\nabla\phi_\xi}^2\right),\end{equation}
for $\phi_\xi=\xi_i\phi_i$.  The following theorem proves that there exists an almost surely finite radius $R_0\in(0,\infty)$ after which point the solutions of \eqref{intro_whole_space} enter the regime of $\a$-H\"older regularity.  The radius $R_0$ is quantified precisely by the sublinearity of the correctors in Proposition~\ref{prop_excess} below.

\begin{thm*}[cf.\ Proposition~\ref{prop_excess}, Theorem~\ref{thm_excess} below] Assume \eqref{steady}.  On a subset of full probability, for every $\a\in(0,1)$ there exists a random radius $R_0\in(0,\infty)$ and a deterministic $c\in(0,\infty)$ depending on $\a$ such that, for every weak solution $u\in H^1_{\textrm{loc}}(\R^d)$ of \eqref{intro_whole_space}, for every $R_1<R_2\in(R_0,\infty)$,
\[R^{-2\a}_1\Exc(u;R_1)\leq cR_2^{-2\a}\Exc(u;R_2),\]
for the excess $\Exc(u;R)$ defined in \eqref{intro_excess}.  \end{thm*}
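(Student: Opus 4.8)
The plan is to follow the Campanato-iteration strategy of Gloria--Neukamm--Otto, adapted to the intrinsic $(a+s)$-harmonic coordinates. The heart of the matter is a \emph{one-step excess-decay estimate}: I claim there is a deterministic $\theta\in(0,\tfrac12)$ and, once the correctors are sufficiently sublinear on the relevant scales, a constant $C$ such that for any weak solution $u$ of \eqref{intro_whole_space} and all large $R$,
\begin{equation*}
\Exc(u;\theta R)\leq \tfrac12\,\theta^{2\a}\,\Exc(u;R)+C\,\delta(R)^2\bigl(\Exc(u;R)+\text{(lower-order)}\bigr),
\end{equation*}
where $\delta(R)$ measures the sublinear growth of $(\phi_i,\sigma_i)$ at scale $R$ (the quantity controlled in Proposition~\ref{prop_excess}). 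The first term reflects the $C^{1,\a}$-regularity of $\overline{a}$-harmonic functions, and the error term is what one pays for replacing the constant-coefficient equation by the true one via the two-scale expansion. Iterating this inequality from $R_2$ down to $R_1$ along the geometric sequence $\theta^k R_2$, and absorbing the error terms using $\sum_k \delta(\theta^k R_2)^2<\infty$ (which holds on a set of full probability once $R_0$ is chosen large enough, by the almost-sure sublinearity), yields the stated comparison $R_1^{-2\a}\Exc(u;R_1)\leq c\,R_2^{-2\a}\Exc(u;R_2)$.

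The main steps, in order, are as follows. \emph{Step 1:} Fix a ball $B_R$ and let $\xi$ be the near-optimizer in the definition of $\Exc(u;R)$, so that $u$ is close in $H^1(B_R)$ to the affine-plus-corrector profile $\xi\cdot x+\phi_\xi$. \emph{Step 2:} Compare $u$ on $B_{R/2}$ with the solution $w$ of the constant-coefficient homogenized equation $-\nabla\cdot\overline{a}\nabla w=0$ having the same boundary data; here is where the flux corrector $\sigma_i$ from \eqref{intro_flux} enters, exactly as in \eqref{intro_00000}, to write the difference $u-(\xi\cdot x+\phi_\xi)-(w-\xi\cdot x)$ as the solution of an equation whose right-hand side is in divergence form with a factor $(\phi_i(a+s)-\sigma_i)\nabla(\partial_i w)$. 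Energy estimates then bound $\nabla(u-w-\nabla\phi_\xi\cdot(\text{stuff}))$ in $L^2(B_{R/2})$ by $\delta(R)$ times the Dirichlet energy, after using interior $C^2$-estimates for $w$ to control $\nabla^2 w$. \emph{Step 3:} Apply the classical $C^{1,\a}$ interior estimate for $w$ on $B_{\theta R}$ to get $\fint_{B_{\theta R}}|\nabla w-\nabla w(0)|^2\leq C\theta^{2\a}\fint_{B_{R/2}}|\nabla w-(\nabla w)_{B_{R/2}}|^2$. \emph{Step 4:} Choose the new slope $\xi'=\nabla w(0)$, add and subtract, and use the triangle inequality together with Step 2 to convert the estimate for $w$ into the claimed decay for $\Exc(u;\theta R)$, fixing $\theta$ small so that $C\theta^{2\a}\leq\tfrac12\theta^{2\a}\cdot\text{(adjusted)}$, i.e.\ so the leading geometric factor has the right exponent.

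The step I expect to be the genuine obstacle is \emph{Step 2}, the harmonic approximation in intrinsic coordinates: one must control the cross terms $\fint_{B_{R/2}}|\sigma_i-\phi_i(a+s)|^2|\nabla^2 w|^2$ using only the $L^2$-sublinearity of the correctors, which is exactly why the excess is defined with the corrector built in — it makes the linearization error genuinely small rather than merely bounded. Closely related is the bookkeeping that $\delta(R)$ as controlled by Proposition~\ref{prop_excess} is summable along a dyadic (or $\theta$-adic) scale, and that the implicit constants do not degenerate; this is where the $L^{d\vee(2+\d)}$-integrability of $S$, inherited through Proposition~\ref{prop_sublinear}, is used to guarantee the requisite algebraic rate of decay of $\delta$. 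Once the one-step estimate and the summability are in hand, the iteration and the passage from discrete scales $\theta^k R_2$ to arbitrary $R_1<R_2$ is routine (interpolating over the dyadic annulus containing $R_1$), and the deterministic constant $c$ depends only on $\a$, $d$, $\lambda$, $\Lambda$.
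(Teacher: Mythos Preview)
Your overall Campanato strategy---harmonic approximation via the two-scale expansion with flux corrector, interior $C^{1,\a}$-estimate for the $\overline{a}$-harmonic comparison, update the slope to $\nabla w(0)$, iterate---is exactly the paper's route (Proposition~\ref{prop_hom_energy} and Proposition~\ref{prop_excess}). The structure of Steps~1--4 is correct.

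There is, however, a genuine gap in how you close the iteration. You write the one-step estimate with an additive error $C\,\delta(R)^2\,\Exc(u;R)$ and propose to ``absorb the error terms using $\sum_k \delta(\theta^k R_2)^2<\infty$,'' claiming that the $L^{d\vee(2+\d)}$-integrability of $S$ yields ``the requisite algebraic rate of decay of $\delta$.'' Under the paper's hypotheses---stationarity and \emph{qualitative} ergodicity---Proposition~\ref{prop_sublinear} gives only $\delta(R)\to 0$ almost surely, with no rate whatsoever; the integrability exponent $d\vee(2+\d)$ determines in which $L^q$-norm the sublinearity holds, not how fast it kicks in. So $\sum_k \delta(\theta^k R_2)^2<\infty$ is not available and your iteration does not close as written. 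The paper sidesteps this entirely: it first fixes $\theta_0$ from the constant-coefficient estimate, then chooses a deterministic smallness threshold $\nicefrac{1}{C_\a}$ for $\delta$ so that, whenever $\delta(R)\leq\nicefrac{1}{C_\a}$, the error term is absorbed into the leading term and one obtains the \emph{exact} inequality $\Exc(u;\theta_0 R)\leq \theta_0^{2\a}\Exc(u;R)$ (see \eqref{lsr_42}). The random radius $R_0$ is then simply the scale beyond which $\delta(R)\leq\nicefrac{1}{C_\a}$ holds, which exists almost surely by $\delta(R)\to 0$. No summability is needed.

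A secondary point on Step~2: taking $w$ with ``the same boundary data'' as $u$ is delicate here because $s$ is unbounded, so the boundary flux term $\int_{\partial B_r}(u-w)\,\nu\cdot(a+s)\nabla u$ does not obviously vanish or stay small. The paper (Proposition~\ref{prop_hom_energy}) handles this by mollifying $u$ on a good sphere $\partial B_r$ at scale $\ve$ before imposing it as Dirichlet data for $v^\ve$, and then plays the three parameters $\ve$, $\rho$ (boundary-layer cutoff), and the corrector smallness against each other; the exponent $q_d=d\vee(2+\d)$ enters precisely to control $\left(\fint\abs{s}^{q_d}\right)^{\nicefrac{1}{q_d}}\left(\fint\abs{\phi_i}^{2_*}\right)^{\nicefrac{1}{2_*}}$ via H\"older. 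Your sketch is compatible with this, but the boundary treatment is where the unboundedness of $S$ actually bites and should not be glossed over.
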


The final result of this work is a first-order Liouville theorem.  In analogy with the classical first-order Liouville theorem, the $(a+s)$-harmonic coordinates $(x_i+\phi_i)$ are the linear functions in the random geometry of the space, and every subquadratic $(a+s)$-harmonic function is a corrector.  The sublinearity is quantified with respect to the $L^{2_*}$-norm, for $2_*>2$ defined below, as opposed to the $L^2$-norm used in \cite{GloNeuOtt2020}.  This stronger condition is necessary for our arguments due to the unboundedness of the stream matrix.  In combination, the Liouville theorem and Proposition~\ref{prop_sublinear} below prove the quenched uniqueness of the homogenization correctors and thereby provide a strong answer to the original question of \cite{AveMaj1991} on the physical space.

\begin{thm*}[cf.\ Theorem~\ref{thm_lvl} below]  Assume \eqref{steady}, let $q_d=d\vee(2+\d)$, and let $\nicefrac{1}{2_*}=\nicefrac{1}{2}-\nicefrac{1}{q_d}$.  Then, on a subset of full probability, every weak solution $u\in H^1_{\textrm{loc}}(\R^d)$ of \eqref{intro_whole_space} that is strictly subquadratic in the sense that, for some $\a\in(0,1)$,
\[\lim_{R\rightarrow\infty}\frac{1}{R^{1+\a}}\left(\fint_{B_R}\abs{u}^{2_*}\right)^\frac{1}{2_*}=0,\]
satisfies $u=c+\xi\cdot x+\phi_\xi$ in $H^1_{\textrm{loc}}(\R^d)$ for some $c\in\R$ and $\xi\in\R^d$.
\end{thm*}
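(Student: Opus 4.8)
The plan is to derive the first-order Liouville principle from two ingredients already available: the large-scale excess decay of Theorem~\ref{thm_excess}, and a Caccioppoli-type energy estimate adapted to the unbounded stream matrix; the latter is the only place where the exponents $q_d=d\vee(2+\d)$ and $2_*=2q_d/(q_d-2)$ intervene. I will work on the subset of full probability on which, simultaneously: (i) the excess decay of Theorem~\ref{thm_excess} holds; (ii) the spatial ergodic theorem applies to $|S|^{q_d}\in L^1(\O)$, so that $\fint_{B_{2R}}(\L+|s|)^{q_d}$ stays bounded for $R$ large, and to the $L^1(\O)$ quantities $(e_i+\nabla\phi_i)\cdot(e_j+\nabla\phi_j)$ (integrable since $\nabla\phi_i\in L^2(\O)$ by Proposition~\ref{prop_corrector}), so that $\fint_{B_R}(e_i+\nabla\phi_i)\cdot(e_j+\nabla\phi_j)\to\E[(e_i+\nabla\phi_i)\cdot(e_j+\nabla\phi_j)]$, the limiting matrix being positive definite because its quadratic form at $\xi\in\R^d$ equals $\E|\xi+\nabla\phi_\xi|^2\geq|\xi|^2$; and (iii) the correctors $\phi_i\in H^1_{\textrm{loc}}(\R^d)$ exist with the sublinearity of Proposition~\ref{prop_sublinear}.

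\emph{Step 1 (Caccioppoli with the stream matrix).} Given a weak solution $u\in H^1_{\textrm{loc}}(\R^d)$ of \eqref{intro_whole_space}, I would test the equation against $\eta^2\big(u-\fint_{B_{2R}}u\big)$, where $\eta\in C^\infty_c(B_{2R})$ equals $1$ on $B_R$ with $|\nabla\eta|\leq 2/R$. Using that $s\nabla u\cdot\nabla u=0$ pointwise by skew-symmetry, together with uniform ellipticity and Young's inequality, one obtains
\[\fint_{B_R}|\nabla u|^2\lesssim\frac1{R^2}\fint_{B_{2R}}(\L+|s|)^2\big|u-\textstyle\fint_{B_{2R}}u\big|^2.\]
Then H\"older's inequality with exponents $q_d/2$ and $q_d/(q_d-2)$, the identity $2q_d/(q_d-2)=2_*$, the ergodic-theorem bound on $\fint_{B_{2R}}(\L+|s|)^{q_d}$, and $\fint_{B_{2R}}|u-\fint_{B_{2R}}u|^{2_*}\lesssim\fint_{B_{2R}}|u|^{2_*}$ yield
\[\fint_{B_R}|\nabla u|^2\lesssim\frac1{R^2}\Big(\fint_{B_{2R}}|u|^{2_*}\Big)^{2/2_*}\qquad\textrm{for all }R\textrm{ large}.\]
I expect this to be the main obstacle. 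One must first justify that the weak formulation of \eqref{intro_whole_space} extends to the unbounded test function $\eta^2(u-\fint_{B_{2R}}u)$, which is legitimate since $s\in L^{q_d}$ with $q_d\geq 2$ forces $s\nabla u\in L^1_{\textrm{loc}}$ and the identity passes to the limit after truncating $u$; more substantively, it is precisely here that the $L^{q_d}$-integrability of $S$ (rather than mere square integrability) is forced, and where the threshold exponent $2_*$ originates.

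\emph{Step 2 (the excess vanishes).} By strict subquadraticity, $\big(\fint_{B_{2R}}|u|^{2_*}\big)^{1/2_*}=o(R^{1+\a})$, so Step~1 gives $\fint_{B_R}|\nabla u|^2=o(R^{2\a})$; since $\phi_0=0$, choosing $\xi=0$ in \eqref{intro_excess} gives $\Exc(u;R)\leq R^{-2\a}\fint_{B_R}|\nabla u|^2=o(1)$. Feeding this into Theorem~\ref{thm_excess}: for a fixed $R_1>R_0$ and any $R_2>R_1$ one has $R_1^{-2\a}\Exc(u;R_1)\leq cR_2^{-2\a}\Exc(u;R_2)$, and letting $R_2\to\infty$ the right-hand side tends to $0$ because $\Exc(u;R_2)\to 0$; hence $\Exc(u;R_1)=0$, i.e.\ $\inf_{\xi\in\R^d}\fint_{B_{R_1}}|\nabla u-\xi-\nabla\phi_\xi|^2=0$, for every $R_1>R_0$.

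\emph{Step 3 (identification of the affine profile).} For $R$ large the functional $\xi\mapsto\fint_{B_R}|\nabla u-\xi-\nabla\phi_\xi|^2$ is, by (ii), a coercive quadratic in $\xi$, so the infimum in Step~2 is attained at a unique $\xi(R)\in\R^d$ and $\nabla u=\xi(R)+\nabla\phi_{\xi(R)}$ a.e.\ on $B_R$. Comparing this on $B_R$ and $B_{R'}$ for $R<R'$ (both large) and using linearity of $\xi\mapsto\nabla\phi_\xi$ gives $\fint_{B_R}\big|(\xi(R)-\xi(R'))+\nabla\phi_{\xi(R)-\xi(R')}\big|^2=0$, so coercivity forces $\xi(R)=\xi(R')=:\xi$. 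Thus $\nabla u=\xi+\nabla\phi_\xi$ a.e.\ on every sufficiently large ball, hence a.e.\ on $\R^d$; since $\R^d$ is connected, $u-\xi\cdot x-\phi_\xi$ is a.e.\ equal to a constant $c$, i.e.\ $u=c+\xi\cdot x+\phi_\xi$ in $H^1_{\textrm{loc}}(\R^d)$, using $\phi_\xi\in H^1_{\textrm{loc}}(\R^d)$ from Proposition~\ref{prop_sublinear}.
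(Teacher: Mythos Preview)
Your proposal is correct and follows essentially the same route as the paper: a Caccioppoli inequality adapted to the unbounded stream matrix (the paper's Proposition~\ref{prop_lvl_est}), combined with the excess decay of Theorem~\ref{thm_excess} to force $\Exc(u;R_1)=0$ for all large $R_1$, and then the coercivity coming from $\E|\xi+\Phi_\xi|^2\geq|\xi|^2$ to pin down a unique $\xi$. The only cosmetic differences are that you subtract the mean of $u$ in the Caccioppoli step and bundle $\Lambda$ and $|s|$ into a single factor, whereas the paper tests with $\eta_R^2 u_n$ (no mean subtracted) and keeps the $a$- and $s$-contributions separate; both lead to the same $R^{-2}(\fint|s|^{q_d})^{2/q_d}(\fint|u|^{2_*})^{2/2_*}$ control after H\"older.
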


\subsection{The organization of the paper}  In Section~\ref{sec_prob}, we describe how equations \eqref{intro_stream}, \eqref{intro_gauge}, and \eqref{intro_corrector} are lifted to the probability space:  in Section~\ref{sec_hom_cor} we construct the homogenization correctors, in Section~\ref{sec_flux_cor} we construct the homogenization flux correctors, and in Section~\ref{sec_stream} we prove the existence of a stationary stream matrix.  The proof of quenched homogenization is presented in Section~\ref{sec_stoch_hom}, where we also prove the well-posedness of \eqref{intro_eq} and the uniform ellipticity of the homogenized coefficient field.  In Section~\ref{sec_lsr}, we first obtain an energy estimate for the homogenization error in Proposition~\ref{prop_hom_energy} and then prove the large-scale regularity estimate.  We prove the Liouville theorem in Section~\ref{sec_lvl}, which is a consequence of the large-scale regularity estimate and a version of the Caccioppoli inequality adapted to the divergence-free setting.

\subsection{Overview of the literature}  The foundational theory of homogenization for elliptic and parabolic equations with periodic coefficients can be found in the references Bensoussan, Lions, and Papanicolaou \cite{BenLioPap2011} and Jikov, Kozlov, and Ole\u{\i}nik \cite{JikKozOle1994}.  The stochastic homogenization of divergence-form equations, and non-divergence-form equations without drift, was initiated by Papanicolaou and Varadhan \cite{PapVar1981,PapVar1982}, Osada \cite{Osa1983}, and Kozlov \cite{Koz1985}.  In the absence of additional assumptions on the drift, the general question of stochastic homogenization for diffusion equations of the type
\begin{equation}\label{lit_eq}-\nabla\cdot a(\nicefrac{x}{\ve},\o)\nabla u^\ve + \frac{1}{\ve}b(\nicefrac{x}{\ve},\o)\cdot\nabla u^\ve = f\;\;\textrm{in}\;\;U\;\;\textrm{with}\;\;u^\ve =g \;\;\textrm{on}\;\;\partial U\end{equation}
remains open.  The difficulty lies in constructing the invariant measure for the process from the point of view of the particle.  Thus far, the construction of this measure has required additional assumptions on the drift, such as the case when $b=\nabla U$ is the gradient of a stationary field, which has been treated, for instance, by Olla in \cite{Oll1994}, and the case when $b$ is divergence-free, which will be discussed in detail below.  The only other known results apply to a perturbative, strongly mixing, and isotropic regime in $d\geq 3$, which have been obtained in the discrete case by Bricmont and Kupiainen \cite{BriKup1991}, Bolthausen and Zeitouni \cite{BolZei2007}, Baur and Bolthausen \cite{BauBol2015}, and Baur \cite{Bau2016} and in the continuous case by Sznitman and Zeitouni \cite{SznZei2006} and the author \cite{Feh2017, Feh2017a,Feh2019} where \cite{Feh2017a} constructs the invariant measure.  A general overview can be found in the reference \cite{Oll1994} and the book Komorowski, Landim, and Olla \cite{KomLanOll2012}.

The homogenization of \eqref{lit_eq} with divergence-free drift was initiated by \cite{Osa1983}, who considered the case of a bounded stream matrix, and Oelschl\"ager \cite{Oel1988}, who proved an annealed invariance principle and the annealed homogenization of equations like \eqref{intro_eq} on the whole space assuming the existence of an $L^2$-integrable, $\C^2$-smooth stream matrix.  More recently, in the discrete case, T\'oth and Kozma \cite{KozTot2017} have proven an annealed invariance principle for the analogous discrete random walk under the so-called $\mathcal{H}_{-1}$-condition, which is equivalent to the existence of a stationary, $L^2$-integrable stream matrix.  T\'oth \cite{Tot2018} then proved a quenched central limit theorem in this setting assuming the existence of an $L^{2+\d}$-integrable stream matrix, using an adaptation of Nash's moment bound.  The higher $L^{d\vee(2+\d)}$-integrability assumption was introduced in Avellaneda and Majda \cite{AveMaj1991} to prove the quenched homogenization of the parabolic version of \eqref{intro_eq} on the whole space.  In \cite{AveMaj1991} correctors are constructed by approximation, and therefore lack an intrinsic characterization.  Related problems under more restrictive integrability assumptions have been considered by Fannjiang and Komorowski \cite{FanKom1997}, and time-dependent problems have been considered by Landim, Olla, and Yau \cite{LanOllYau1998}, Fannjiang and Komorowski \cite{FanKom1999,FanKom2002}, and Komorowski and Olla \cite{KomOll2001}.  Komorowski and Olla \cite{KomOll2002} have provided a counterexample to the annealed homogenization of equations like \eqref{intro_eq} on the whole space for drifts that do not admit a square-integrable stream matrix.

The relationship between Schauder estimates and Liouville theorems for constant-coefficient elliptic equations without drift was shown by Simon \cite{Sim1997}.  In the context of the periodic homogenization of divergence-form elliptic equations without drift Avellaneda and Lin \cite{AveLin1989} obtained a full hierarchy of Liouville theorems based on the large-scale regularity theory in H\"older- and $L^p$-spaces of the same authors Avellaneda and Lin \cite{AveLin1987,AveLin1987a}.   Armstrong and Smart \cite{ArmSma2016} first adapted the approach of \cite{AveLin1987a} to the stochastic case and obtained a large-scale regularity theory for environments satisfying a finite range of dependence.  Their methods are based on the variational characterization of solutions and quantify the convergence of certain sub- and super-additive energies.   Armstrong and Mourrat \cite{ArmMou2016} extended the results of \cite{ArmSma2016} to more general mixing conditions and these works have given rise to a significant literature on the subject.  A complete account of these developments can be found in the monograph Armstrong, Kuusi and Mourrat \cite{ArmKuuMou2019}, which includes applications to percolation clusters Armstrong and Dario \cite{ArmDar2018} and time-dependent environments Armstrong, Bordas, and Mourrat \cite{ArmBorMou2018}.

The results of this work are most closely related to those of Gloria, Neukamm and Otto \cite{GloNeuOtt2020}, who established a large-scale regularity theory and first-order Liouville principle for divergence-form equations without drift under the qualitative assumption of ergodicity.   In particular, the homogenization flux-correction introduced in \cite{GloNeuOtt2020} is used essentially in the proof of every result in this work, and their introduction of an intrinsic excess decay is used to obtain the large-scale regularity estimate and Liouville theorem.  Marahrens and Otto \cite{MarOtt2015} had previously obtained a Liouville theorem assuming a quantified form of ergodicity.  Fischer and Otto \cite{FisOtt2016, FisOtt2017} extended the results of \cite{GloNeuOtt2020}  to obtain a full hierarchy of Liouville theorems under a mild quantification of ergodicity.  Degenerate environments were considered by Bella, the author, and Otto \cite{BelFehOtt2018} and time-dependent environments by Bella, Chiarini, and the author \cite{BelChiFeh2019}.  The work \cite{GloNeuOtt2020} has similarly given rise to a substantial literature on the subject including, for instance, Gloria and Otto \cite{GloOtt2017} and Duerinckx, Gloria, and Otto \cite{DueGloOtt2020}.

\section{The extended homogenization corrector}\label{sec_prob}

In this section, we will describe how the equations \eqref{intro_stream}, \eqref{intro_gauge}, and \eqref{intro_corrector} are lifted to the probability space.  Following \cite{PapVar1981}, the transformation group $\{\tau_x\}_{x\in\R^d}$ is used to define so-called horizontal derivatives $\{D_i\}_{i\in\{1,\ldots d\}}$:  for each $i\in\{1,\ldots,d\}$,
\[\mathcal{D}(D_i)=\{f\in L^2(\O)\colon \lim_{h\rightarrow 0}\nicefrac{f(\tau_{he_i}\o)-f(\o)}{h}\;\textrm{exists strongly in}\;L^2(\O)\},\]
and $D_i\colon \mathcal{D}(D_i)\rightarrow L^2(\O)$ is defined by $D_if=\lim_{h\rightarrow 0}\nicefrac{f(\tau_{he_i}\o)-f(\o)}{h}.$
The $D_i$ are closed, densely defined operators on $L^2(\O)$.  We define $\mathcal{H}^1(\O)=\cap_{i=1}^d\mathcal{D}(D_i)$ and we will write $\mathcal{H}^{-1}(\O)$ for the dual of $\mathcal{H}^1(\O)$.  For $\phi\in \mathcal{H}^1(\O)$ we will write $D\phi=(D_1\phi,\ldots,D_d\phi)$ for the horizontal gradient.

A natural class of test functions can be constructed by convolution.  For each $\psi\in\C^\infty_c(\R^d)$ and $f\in L^\infty(\O)$ we define $\psi_f\in L^\infty(\O)$ as the convolution
\[\psi_f(\o)=\int_{\R^d}f(\tau_x\o)\psi(x)\dx,\]
and we will write $\mathcal{D}(\O)$ for the space of all such functions.  The space $\mathcal{D}(\O)$ is dense in $L^p(\O)$ for every $p\in[1,\infty)$.  We will write $\mathcal{D}'(\O)$ for the dual of $\mathcal{D}(\O)$, and we will understand distributional inequalities in $\mathcal{D}'(\O)$ in the sense that, for $f\in L^1(\O)$,
\[D_if=0\;\;\textrm{if and only if}\;\;\E[fD_i\psi]=0\;\;\textrm{for every}\;\;\psi\in\mathcal{D}(\O).\]
For a vector field $V=(V_i)_{i\in\{1,\ldots,d\}}\in L^2(\O;\R^d)$ we define the distributional divergence $D\cdot V=D_iV_i$.  The space of vector fields $L^2(\O;\R^d)$ then admits the following Helmoltz decomposition.  The space of potential or curl-free fields on $\O$ is defined by
\[L^2_{\textrm{pot}}(\O)=\overline{\left\{D\psi\in L^2(\O;\R^d)\colon \psi\in \mathcal{H}^1(\O)\right\}}^{L^2(\O;\R^d)},\]
which is the $L^2(\O;\R^d)$-closure of the space of $\mathcal{H}^1$-gradients.  The space of solenoidal or divergence-free fields is defined by
\[L^2_{\textrm{sol}}(\O)=\{V\in L^2(\O;\R^d)\colon D\cdot V=0\}.\]
The space $L^2(\O;\R^d)$ then admits the orthogonal decomposition
\[L^2(\O;\R^d)=L^2_{\textrm{pot}}(\O)\oplus L^2_{\textrm{sol}}(\O),\]
which can be deduced from Proposition~\ref{prop_flux} below.  We will now use this framework to lift equations like \eqref{intro_stream}, \eqref{intro_gauge}, and \eqref{intro_corrector} to the probability space.

\subsection{The homogenization corrector}\label{sec_hom_cor}  We will construct the homogenization corrector as a stationary gradient $\Phi_i$ in $L^2_{\textrm{pot}}(\O)$ satisfying
\begin{equation}\label{re_1}-D\cdot(A+S)(\Phi_i+e_i)=0\;\;\textrm{in}\;\;\mathcal{D}'(\O).\end{equation}
The solution is identified by approximation.  We first prove that for every $\a\in(0,1)$ there exists a unique $\Phi_{i,\a}\in \mathcal{H}^1(\O)$ satisfying the equation
\begin{equation}\label{cor_approx_cor_eq}\a\Phi_{i,\a}-D\cdot (A+S)(D\Phi_{i,\a}+e_i)=0,\end{equation}
where here, in comparison to \eqref{re_1}, the proof of uniqueness is simpler and relies crucially on the stationarity of $\Phi_{i,\a}$ itself.  We will then show that the $D\Phi_{i,\a}$ converge along the full sequence $\a\rightarrow 0$ in $L^2_{\textrm{pot}}(\O)$ to the unique solution of \eqref{re_1}.

The subsection is organized as follows.  We will first present a general proof of sublinearity for the homogenization correctors in Proposition~\ref{prop_sublinear} below.  We analyze \eqref{cor_approx_cor_eq} in Proposition~\ref{prop_approx_cor} below.  Finally,  in Proposition~\ref{prop_corrector} below, we prove that there exists a unique stationary gradient satisfying \eqref{re_1}.  The proof of Proposition~\ref{prop_corrector} is strongly motivated by \cite[Lemma~3.27]{Oel1988} and extends \cite[Lemma~3.27]{Oel1988} to the case of a general $L^2$-integrable stream matrix.  The proof of sublinearity is essentially well-known, but we include details here, in particular, to handle the less standard case $q=p_*$ below.

\begin{prop}\label{prop_sublinear}  Assume \eqref{steady}.  Let $p\in(1,\infty)$, let $F\in L^p(\O;\R^d)$ satisfy
\[D_iF_j=D_jF_i\;\;\textrm{for every}\;\;i,j\in\{1,\ldots,d\}\;\;\textrm{and}\;\;\E\left[F\right]=0,\]
and let $\phi\colon\R^d\times\O\rightarrow \R$ almost surely satisfy $\phi\in W^{1,p}_{\textrm{loc}}(\R^d)$ with $\nabla\phi(x,\omega)=F(\tau_x\omega)$.  If $p<d$ and $\nicefrac{1}{p_*}=\nicefrac{1}{p}-\nicefrac{1}{d}$ we have almost surely that, for every $q\in[1,p_*]$,
\begin{equation}\label{sub_00000}\lim_{R\rightarrow\infty}\frac{1}{R}\left(\fint_{B_R}\abs{\phi}^q\right)^\frac{1}{q}=0.\end{equation}
If $p\geq d$ then \eqref{sub_00000} holds for every $q\in[1,\infty)$.  If $p>d$ then $\lim_{R\rightarrow\infty}\nicefrac{\norm{\phi}_{L^\infty(B_R)}}{R}=0$.
 \end{prop}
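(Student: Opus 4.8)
The plan is to reduce, via the scale-invariant Sobolev--Poincar\'e inequality and the spatial ergodic theorem, to the case of a bounded stationary potential---where sublinearity is immediate---and then to pass from $\phi-\fint_{B_R}\phi$ back to $\phi$ by a dyadic telescoping argument. Two elementary facts are used throughout. First, the scale-invariant Sobolev--Poincar\'e inequality: there is $c=c(d,p,q)$ so that for $w\in W^{1,p}(B_R)$,
\[\frac{1}{R}\left(\fint_{B_R}\abs{w-\fint_{B_R}w}^{q}\right)^{1/q}\le c\left(\fint_{B_R}\abs{\nabla w}^{p}\right)^{1/p},\]
valid with $q=p_*$ when $p<d$, with every finite $q$ when $p=d$, and with the left-hand side replaced by $R^{-1}\norm{w-w(x_0)}_{L^\infty(B_R)}$ (any fixed $x_0$) when $p>d$. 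Second, the ergodic theorem: for $g\in L^1(\O)$, almost surely $\fint_{B_R}g(\tau_x\o)\dx\to\E[g]$ as $R\to\8$; applied to $\abs{F}^p$ and, below, to countably many further $L^1(\O)$-functions, it furnishes a single event of full probability on which everything runs. Finally, replacing $\phi$ by $\phi+c(\o)$ alters $R^{-1}(\fint_{B_R}\abs{\phi}^q)^{1/q}$ by at most $R^{-1}\abs{c(\o)}\to0$, so $\phi$ may be normalized freely, and since $L^q$-averages over $B_R$ are nondecreasing in $q$ it suffices to prove the claim for the top exponent in each case: $q=p_*$ when $p<d$, every finite $q$ when $p=d$, and the $L^\infty$-version when $p>d$.

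The heart of the matter is an approximation by bounded potentials. Being curl-free with zero mean, $F$ lies in the $L^p(\O;\R^d)$-closure of the $\mathcal{H}^1(\O)$-gradients (the $L^p$ Helmholtz characterization of the potential fields); truncating a potential $\Psi$ at height $M$ and letting $M\to\8$, with dominated convergence against $\abs{D\Psi}^p$, shows that in fact $\{D\Psi\colon\Psi\in\mathcal{H}^1(\O)\cap L^\infty(\O)\}$ is dense there. Fix such $\Psi_n$ with $D\Psi_n\to F$ in $L^p(\O;\R^d)$ and set $\phi_n(x,\o)=\Psi_n(\tau_x\o)-\Psi_n(\o)$, the stationary lift of $D\Psi_n$: thus $\nabla\phi_n(x,\o)=(D\Psi_n)(\tau_x\o)$, while $\norm{\phi_n(\cdot,\o)}_{L^\infty(\R^d)}\le 2\norm{\Psi_n}_{L^\infty(\O)}$ for a.e.\ $\o$. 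Since $\phi-\phi_n$ has gradient $(F-D\Psi_n)(\tau_{\cdot}\o)$, the scale-$R$ Sobolev--Poincar\'e inequality applied to $\phi-\phi_n$ together with the ergodic theorem applied to $\abs{F-D\Psi_n}^p$ gives, almost surely,
\[\limsup_{R\to\8}\frac{1}{R}\left(\fint_{B_R}\abs{(\phi-\phi_n)-\fint_{B_R}(\phi-\phi_n)}^{p_*}\right)^{1/p_*}\le c\,\norm{F-D\Psi_n}_{L^p(\O)}.\]
Because $\phi_n$ is bounded, $R^{-1}(\fint_{B_R}\abs{\phi_n-\fint_{B_R}\phi_n}^{p_*})^{1/p_*}\to0$; adding the two bounds via the triangle inequality for the linear map $w\mapsto w-\fint_{B_R}w$ and then letting $n\to\8$ yields $R^{-1}(\fint_{B_R}\abs{\phi-\fint_{B_R}\phi}^{p_*})^{1/p_*}\to0$, and likewise in the cases $p\ge d$.

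It remains to control $\fint_{B_R}\phi$. Writing $\eta(R)=R^{-1}\fint_{B_R}\abs{\phi-\fint_{B_R}\phi}\to0$ (previous step, Jensen), one has $\abs{\fint_{B_{2R}}\phi-\fint_{B_R}\phi}\le\fint_{B_R}\abs{\phi-\fint_{B_{2R}}\phi}\le 2^{d+1}R\,\eta(2R)$; telescoping over the dyadic scales, using $\sum_{j\le k}2^{j}\eta(2^{j})=o(2^{k})$ since $\eta(2^{j})\to0$, and interpolating between consecutive scales gives $R^{-1}\abs{\fint_{B_R}\phi}\to0$. Together with the centered estimate this gives \eqref{sub_00000}, while for $p>d$ the value $\phi(x_0)$ subtracted above is already a fixed constant, so $R^{-1}\norm{\phi}_{L^\infty(B_R)}\to0$ directly. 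The main obstacle is precisely the endpoint $q=p_*$ when $p<d$: there the crude bound $R^{-1}(\fint_{B_R}\abs{\phi-\fint_{B_R}\phi}^{p_*})^{1/p_*}\le c(\fint_{B_R}\abs{\nabla\phi}^p)^{1/p}\to c\,\E[\abs{F}^p]^{1/p}$ is merely linear, and the familiar compactness argument for the rescalings $R^{-1}\phi(R\,\cdot\,,\o)$ fails because $W^{1,p}(B_1)\hookrightarrow L^{p_*}(B_1)$ is not compact; the approximation by bounded potentials is exactly what bridges this gap, whereas for subcritical $q<p_*$ that compactness argument already suffices.
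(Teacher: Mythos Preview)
Your proof is correct and reaches the same conclusion, but the mechanism you use for the centered estimate is genuinely different from the paper's.

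The paper proves $\frac{1}{R}\bigl(\fint_{B_R}\abs{\phi-\fint_{B_R}\phi}^{p_*}\bigr)^{1/p_*}\to 0$ by a \emph{spatial} mollification: after rescaling to $\phi^\ve(x)=\ve\phi(x/\ve)$ on $B_1$, it sets $\phi^{\ve,\delta}=\phi^\ve\ast\rho^\delta$ and splits via Sobolev--Poincar\'e into $\bigl(\fint_{B_1}\abs{\nabla\phi^{\ve,\delta}}^p\bigr)^{1/p}+\bigl(\fint_{B_1}\abs{\nabla(\phi^\ve-\phi^{\ve,\delta})}^p\bigr)^{1/p}$. The first term vanishes as $\ve\to 0$ because $\E[F]=0$ and the ergodic theorem force $\nabla\phi^\ve\rightharpoonup 0$ weakly in $L^p_{\mathrm{loc}}$, so $\nabla\phi^{\ve,\delta}(x)=\int\nabla\phi^\ve(y)\rho^\delta(x-y)\,dy\to 0$ pointwise and is dominated; the second term converges by the ergodic theorem to $\E\bigl[\abs{F-F^\delta}^p\bigr]^{1/p}$, which is then sent to zero with $\delta$. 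You instead approximate $F$ on the probability space by gradients $D\Psi_n$ of bounded stationary potentials, so that the corresponding $\phi_n$ is uniformly bounded and hence trivially sublinear, and control the remainder by $\norm{F-D\Psi_n}_{L^p(\O)}$. The dyadic telescoping to remove the mean is essentially identical in both arguments.

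What each approach buys: the paper's route is entirely self-contained---it uses only the ergodic theorem, Sobolev--Poincar\'e, and elementary convolution facts, and in particular never needs to know that a curl-free mean-zero $L^p$ field lies in the $L^p$-closure of stationary gradients. Your route is conceptually cleaner once that density is granted, but it is an external input: the paper only sets up the $L^2$ Helmholtz decomposition, and the $L^p$ characterization you invoke (standard, e.g.\ in Jikov--Kozlov--Ole\u{\i}nik) ultimately rests on a div--curl orthogonality on $\O$ or on Calder\'on--Zygmund bounds for the Riesz projection there. A minor point of phrasing: for $p>2$ the set ``$\mathcal{H}^1(\O)$-gradients'' is a priori only in $L^2$, so the $L^p$-closure you want is really that of $\{D\psi:\psi\in\mathcal{D}(\O)\}$ (whose elements are bounded, so the truncation step is then unnecessary); your truncation argument implicitly assumes the approximating $D\Psi$ already lies in $L^p$.
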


\begin{proof}  We will first consider the case $p<d$ and $q=p_*$.  After rescaling we observe that
\begin{equation}\label{sub_0000}\limsup_{R\rightarrow\infty}\frac{1}{R}\left(\fint_{B_R}\abs{\phi}^{p_*}\right)^\frac{1}{p_*}=\limsup_{\ve\rightarrow 0}\left(\fint_{B_1}\abs{\phi^\ve}^{p_*}\right)^\frac{1}{p_*},\end{equation}
for $\phi^\ve(x)=\ve\phi(\nicefrac{x}{\ve})$.  We will first prove that
\begin{equation}\label{sub_000}\limsup_{\ve\rightarrow 0}\left(\fint_{B_1}\abs{\phi^\ve-\fint_{B_1}\phi^\ve}^{p_*}\right)^\frac{1}{p_*}=0,\end{equation}
and then show that \eqref{sub_000} implies \eqref{sub_00000}.  For every $\d\in(0,1)$ let $\rho^\d$ be a standard convolution kernel of scale $\d$, and for every $\ve,\d\in(0,1)$ let $\phi^{\ve,\d}=\phi^\ve*\rho^\d$.  The triangle inequality proves that, for every $\ve,\d\in(0,1)$,
\begin{align*}
\left(\fint_{B_1}\abs{\phi^\ve-\fint_{B_1}\phi^\ve}^{p_*}\right)^\frac{1}{p_*} & \leq  \left(\fint_{B_1}\abs{\phi^{\ve,\d}-\fint_{B_1}\phi^{\ve,\d}}^{p_*}\right)^\frac{1}{p_*}
\\ \nonumber & \quad + \left(\fint_{B_1}\abs{\left(\phi^\ve-\fint_{B_1}\phi^\ve\right)-\left(\phi^{\ve,\d}-\fint_{B_1}\phi^{\ve,\d}\right)}^{p_*}\right)^\frac{1}{p_*}.
\end{align*}
The Sobolev inequality proves that there exists $c\in(0,\infty)$ such that, for each $\ve,\delta\in(0,1)$,
\begin{align}\label{sub_0}
\left(\fint_{B_1}\abs{\phi^\ve-\fint_{B_1}\phi^\ve}^{p_*}\right)^\frac{1}{p_*} & \leq  c\left(\left(\fint_{B_1}\abs{\nabla \phi^{\ve,\d}}^p\right)^\frac{1}{p}+ \left(\fint_{B_1}\abs{\nabla\left(\phi^\ve-\phi^{\ve,\d}\right)}^{p}\right)^\frac{1}{p}\right).
\end{align}
For the first term on the righthand side of \eqref{sub_0}, since Jensen's inequality proves that, for every $x\in B_1$ and $\ve,\d\in(0,1)$,
\[\abs{\nabla\phi^{\ve,\d}(x,\omega)}^p=\abs{\int_{\R^d}\nabla\phi^\ve(y)\rho^\delta(y-x)\dy}^p\leq\norm{\rho^\delta}_{L^\infty(\R^d)}\int_{B_2}\abs{\nabla\phi^\ve}^p,\]
the ergodic theorem and $F\in L^p(\O;\R^d)$ prove that, almost surely for each $\delta\in(0,1)$,
\begin{equation}\label{sub_3}\sup_{\ve\in(0,1)}\left(\sup_{x\in B_1}\abs{\nabla\phi^{\ve,\d}(x,\omega)}\right)<\infty.\end{equation}
Since the ergodic theorem and $\E[F]=0$ prove almost surely that, as $\ve\rightarrow 0$,
\[\nabla\phi^\ve(x,\omega)\rightharpoonup 0\;\;\textrm{weakly in}\;\;L^p_{\textrm{loc}}(\R^d;\R^d),\]
we have, almost surely for every $\delta\in(0,1)$ and $x\in B_1$,
\begin{equation}\label{sub_1}\lim_{\ve\rightarrow 0}\abs{\nabla \phi^{\ve,\d}(x)}^p=\lim_{\ve\rightarrow 0}\abs{\int_{\R^d}\nabla\phi^\ve(y)\rho^\d(y-x)\dy}^p=0.\end{equation}
The dominated convergence theorem, \eqref{sub_3}, and \eqref{sub_1} then prove that, almost surely for every $\delta\in(0,1)$,
\begin{equation}\label{sub_04}\limsup_{\ve\rightarrow 0}\left(\fint_{B_1}\abs{\nabla \phi^{\ve,\d}}^p\right)^\frac{1}{p}=0.\end{equation}
For the second term on the righthand side of \eqref{sub_0}, the ergodic theorem proves almost surely for every $\delta\in(0,1)$ that
\begin{equation}\label{sub_4}\lim_{\ve\rightarrow 0}\left(\fint_{B_1}\abs{\nabla\left(\phi^\ve-\phi^{\ve,\d}\right)}^{p}\right)^\frac{1}{p}=\E\left[\abs{F-F^\delta}^p\right]^\frac{1}{p},\end{equation}
for $F^\delta(\omega)=\int_{\R^d}F(\tau_y\omega)\rho^\delta(y)\dy$.  Returning to \eqref{sub_0}, it follows from \eqref{sub_04} and \eqref{sub_4} that, for every $\d\in(0,1)$,
\[\limsup_{\ve\rightarrow 0}\left(\fint_{B_1}\abs{\phi^\ve-\fint_{B_1}\phi^\ve}^{p_*}\right)^\frac{1}{p_*} \leq \E\left[\abs{F-F^\delta}^p\right]^\frac{1}{p}.\]
It follows from $F\in L^p(\O;\R^d)$ that $\lim_{\delta\rightarrow 0}\E\left[\abs{F-F^\delta}^p\right]^\frac{1}{p}=0$, which complete the proof of \eqref{sub_000}.

It remains to prove that \eqref{sub_000} implies \eqref{sub_00000}.  The following argument appears in \cite[Lemma~2]{BelFehOtt2018}.  Due to the equivalence \eqref{sub_0000}, almost surely for every $\delta\in(0,1)$ there exists $R_0\in(0,\infty)$ such that, for every $R\geq R_0$,
\[\left(\fint_{B_R}\abs{\phi-\fint_{B_R}\phi}^{p_*}\right)^\frac{1}{p_*}\leq R\delta.\]
By the triangle inequality, for every $R\in[R_0,2R_0]$, for $c\in(0,\infty)$ independent of $R$,
\begin{align*}
\abs{\fint_{B_R}\phi-\fint_{B_{R_0}}\phi} & \leq \left(\fint_{B_{R_0}}\abs{\phi-\fint_{B_R}\phi}^{p_*}\right)^\frac{1}{p_*}+ \left(\fint_{B_{R_0}}\abs{\phi-\fint_{B_{R_0}}\phi}^{p_*}\right)^\frac{1}{p_*}
\\ & \leq \left(\frac{R}{R_0}\right)^\frac{d}{p_*}R\delta+R_0\delta \leq \left(2^{(\nicefrac{d}{p_*}+1)}+1\right)R_0\delta=cR_0\delta.
\end{align*}
Therefore, for every $R\in[R_0,2R_0]$,
\[\abs{\frac{1}{R}\fint_{B_R}\phi}\leq \left(\frac{R_0}{R}\right)\abs{\frac{1}{R_0}\fint_{B_{R_0}}\phi}+c\left(\frac{R_0}{R}\right)\delta.\]
It then follows inductively that, for every $R\in[2^{k-1}R_0,2^kR_0]$,
\begin{align*}
\abs{\frac{1}{R}\fint_{B_R}\phi} & \leq \left(\frac{2^{k-1}R_0}{R}\right)\abs{\frac{1}{2^{k-1}R_0}\fint_{B_{2^{k-1}R_0}}\phi}+c\left(\frac{2^{k-1}R_0}{R}\right)\delta
\\ & \leq \abs{\frac{1}{R}\fint_{B_{R_0}}\phi} + c\left(\sum_{j=0}^\infty2^{-j}\right)\delta= \abs{\frac{1}{R}\fint_{B_{R_0}}\phi}+2c\delta.
\end{align*}
Since $\delta\in(0,1)$ was arbitrary, we have almost surely that
\begin{equation}\label{sub_5}\limsup_{R\rightarrow\infty} \abs{\frac{1}{R}\fint_{B_R}\phi}=0.\end{equation}
The triangle inequality, \eqref{sub_000}, and \eqref{sub_5} prove almost surely that
\[\limsup_{R\rightarrow\infty}\frac{1}{R}\left(\fint_{B_R}\abs{\phi}^{p_*}\right)^\frac{1}{p_*}\leq \limsup_{R\rightarrow\infty}\frac{1}{R}\left(\fint_{B_R}\abs{\phi-\int_{B_R}\phi}^{p_*}\right)^\frac{1}{p_*}+\limsup_{R\rightarrow\infty}\abs{\frac{1}{R}\fint_{B_R}\phi}=0,\]
which completes the proof for the case $p<d$ and $q=p_*$.  The fact that \eqref{sub_00000} holds for the cases $p<d$ and $q\in[1,p^*)$ and $p\geq d$ and $q\in[1,\infty)$ is then a consequence of H\"older's inequality.   If $p>d$, returning to \eqref{sub_000}, the Sobolev embedding theorem implies that the sequence $\{\phi^\ve-\int_{B_1}\phi^\ve\}_{\ve\in(0,1)}$ is almost surely bounded in $\C^\alpha(B_1)$ for $\alpha=1-\nicefrac{d}{p}$.  The Arzel\`a-Ascoli theorem, \eqref{sub_1}, and \eqref{sub_5} then prove almost surely that
\[\lim_{\ve\rightarrow 0}\norm{\phi^\ve}_{L^\infty(B_1)}=\lim_{R\rightarrow\infty}\frac{1}{R}\norm{\phi}_{L^\infty(B_R)}=0.\qedhere\]
\end{proof}

\begin{prop}\label{prop_approx_cor} Assume \eqref{steady} under the weaker assumption $S\in L^2(\O;\R^{d\times d})$.  Let $F\in L^2(\O;\R^d)$ and $\alpha\in(0,1)$.  Then there exists a unique $\Phi\in \mathcal{H}^1(\O)$ satisfying the equation
\[\a\Phi-D\cdot (A+S)D\Phi=-D\cdot F\;\;\textrm{in}\;\;D'(\O).\]
Furthermore, $\Phi$ satisfies the energy identity
\begin{equation}\label{cor_000}\E\left[\alpha \Phi^2+AD\Phi\cdot D\Phi\right]=\E\left[F\cdot D\Phi\right].\end{equation}
\end{prop}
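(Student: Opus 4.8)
The plan is to obtain $\Phi$ by the Lax--Milgram theorem applied in the Hilbert space $\mathcal{H}^1(\O)$ equipped with the inner product $\langle \phi,\psi\rangle = \E[\alpha\phi\psi + D\phi\cdot D\psi]$, which is equivalent to the standard $\mathcal{H}^1$-norm since $\alpha>0$. First I would introduce the bilinear form $B(\phi,\psi)=\E[\alpha\phi\psi + (A+S)D\phi\cdot D\psi]$ on $\mathcal{H}^1(\O)\times\mathcal{H}^1(\O)$ and the linear functional $\ell(\psi)=\E[F\cdot D\psi]$. Boundedness of $\ell$ is immediate from Cauchy--Schwarz and $F\in L^2(\O;\R^d)$. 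For the bilinear form, boundedness in $d=2$ is delicate because $S$ is only $L^2$-integrable, so $(A+S)D\phi\cdot D\psi$ need not be integrable for $\phi,\psi\in\mathcal{H}^1(\O)$; however the key point is that the $S$-contribution to the \emph{diagonal} vanishes by skew-symmetry, $\E[SD\phi\cdot D\phi]=0$ (this identity is first justified on $\mathcal{D}(\O)$ using skew-symmetry of $S$ and the fact that $D_iD_j=D_jD_i$ on test functions, then the relevant consequences are extended by density and the a priori bound below). Coercivity therefore follows from \eqref{intro_ue}: $B(\phi,\phi)=\E[\alpha\phi^2 + AD\phi\cdot D\phi]\geq \min(\alpha,\lambda)\norm{\phi}_{\mathcal{H}^1(\O)}^2$.

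To deal rigorously with the unboundedness of $S$, I would not apply Lax--Milgram directly but instead argue by approximation: replace $S$ by $S^\delta(\omega)=\int_{\R^d}S(\tau_y\omega)\rho^\delta(y)\,\dy$, which is bounded (being a convolution of an $L^2$ function against a smooth compactly supported kernel, it lies in $L^\infty(\O)$) and remains skew-symmetric. For each $\delta$, Lax--Milgram on $(\mathcal{H}^1(\O),\langle\cdot,\cdot\rangle)$ with the now-bounded form $B^\delta$ produces a unique $\Phi^\delta\in\mathcal{H}^1(\O)$ solving $\alpha\Phi^\delta - D\cdot(A+S^\delta)D\Phi^\delta=-D\cdot F$, and testing with $\Phi^\delta$ itself gives, using $\E[S^\delta D\Phi^\delta\cdot D\Phi^\delta]=0$, the uniform bound $\E[\alpha(\Phi^\delta)^2 + AD\Phi^\delta\cdot D\Phi^\delta]=\E[F\cdot D\Phi^\delta]\leq \E[\abs{F}^2]^{1/2}\E[\abs{D\Phi^\delta}^2]^{1/2}$, hence $\E[\alpha(\Phi^\delta)^2 + \lambda\abs{D\Phi^\delta}^2]\leq C(\lambda,\alpha)\E[\abs{F}^2]$ uniformly in $\delta$. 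Extract a weakly convergent subsequence $\Phi^\delta\rightharpoonup\Phi$ in $\mathcal{H}^1(\O)$. To pass to the limit in the term $\E[S^\delta D\Phi^\delta\cdot D\psi]$ for $\psi\in\mathcal{D}(\O)$, I would integrate by parts onto the test function: $\E[S^\delta D\Phi^\delta\cdot D\psi]=-\E[\Phi^\delta\, D\cdot(S^\delta D\psi)]$, and since $D\psi$ is bounded and smooth and $S^\delta\to S$ strongly in $L^2(\O;\R^{d\times d})$ while $\Phi^\delta\rightharpoonup\Phi$ weakly in $L^2(\O)$, the right side converges to $-\E[\Phi\, D\cdot(SD\psi)]=\E[SD\Phi\cdot D\psi]$; the remaining linear-in-$\Phi^\delta$ terms pass to the limit by weak convergence. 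This shows $\Phi$ solves the equation in $\mathcal{D}'(\O)$. The energy identity \eqref{cor_000} follows by testing the limit equation with $\Phi$ (legitimate by density of $\mathcal{D}(\O)$ in $\mathcal{H}^1(\O)$ together with the uniform energy bound, which upgrades weak to strong $L^2$-convergence of $D\Phi^\delta$ via the coercivity: $\E[AD\Phi^\delta\cdot D\Phi^\delta]=\E[F\cdot D\Phi^\delta]\to\E[F\cdot D\Phi]=\E[AD\Phi\cdot D\Phi]$ forces norm convergence) and using $\E[SD\Phi\cdot D\Phi]=0$.

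For uniqueness, suppose $\Phi_1,\Phi_2\in\mathcal{H}^1(\O)$ both solve the equation; the difference $\Psi=\Phi_1-\Phi_2\in\mathcal{H}^1(\O)$ satisfies $\alpha\Psi - D\cdot(A+S)D\Psi=0$ in $\mathcal{D}'(\O)$. The crucial point --- emphasized in the text as relying on the stationarity of $\Psi$ itself --- is that $\Psi\in\mathcal{H}^1(\O)$ is an admissible test object: by density of $\mathcal{D}(\O)$ in $\mathcal{H}^1(\O)$ and the boundedness of the $A$-part, together with the skew-symmetry identity $\E[SD\Psi\cdot D\Psi]=0$ (again established by approximating $\Psi$ with elements of $\mathcal{D}(\O)$ in the $\mathcal{H}^1$-norm and noting the $S$-term drops out in the diagonal even though it is only defined as a limit), testing with $\Psi$ gives $\E[\alpha\Psi^2 + AD\Psi\cdot D\Psi]=0$, whence $\E[\Psi^2]=0$ by \eqref{intro_ue}, i.e.\ $\Psi=0$.

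The main obstacle is precisely the handling of the term involving $S$ when $S$ is merely square-integrable: one must make sense of $\E[SD\Phi\cdot D\psi]$ (done by moving a derivative onto $\psi\in\mathcal{D}(\O)$), must justify the vanishing of the diagonal contribution $\E[SD\Phi\cdot D\Phi]$ for general $\Phi\in\mathcal{H}^1(\O)$ (done by $\mathcal{H}^1$-density and the observation that the bilinear pairing $(\phi,\psi)\mapsto \E[SD\phi\cdot D\psi]$, while possibly unbounded on all of $\mathcal{H}^1$, is nonetheless \emph{skew} on $\mathcal{D}(\O)\times\mathcal{D}(\O)$ so its symmetric part is zero there), and must ensure these manipulations survive the $\delta\to 0$ limit. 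Once the skew-symmetry cancellation is pinned down, coercivity is automatic and the rest is the standard Lax--Milgram / weak-compactness routine.
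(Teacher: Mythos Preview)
Your existence outline is close to the paper's, but there are two problems: a fixable technical slip in the approximation, and a genuine gap in the energy identity.

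The slip: the convolution $S^\delta(\omega)=\int S(\tau_y\omega)\rho^\delta(y)\,dy$ is \emph{not} in $L^\infty(\O)$. Convolution along the group action smooths in the spatial variable but does not improve integrability in $\omega$; for fixed $\omega$ you are averaging $y\mapsto S(\tau_y\omega)$ over a ball, and this average is unbounded in $\omega$ whenever $S$ is. The paper instead truncates componentwise, $S_n=((S_{jk}\wedge n)\vee(-n))$, which is genuinely bounded, skew-symmetric, and converges to $S$ strongly in $L^2$. With this replacement your Lax--Milgram step, uniform bound, and weak-limit passage go through exactly as you wrote.

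The gap is the energy identity (and hence uniqueness). You propose to test the limit equation with $\Phi$ itself, justifying the $S$-term via density of $\mathcal{D}(\O)$ and skew-symmetry on $\mathcal{D}(\O)\times\mathcal{D}(\O)$. But the bilinear form $(\phi,\psi)\mapsto\E[SD\phi\cdot D\psi]$ is \emph{not} continuous on $\mathcal{H}^1\times\mathcal{H}^1$ when $S\in L^2$ --- the integrand $SD\Phi\cdot D\Phi$ is a priori only in $L^{2/3}$ --- so skewness on a dense subspace tells you nothing about its diagonal value at a general $\Phi\in\mathcal{H}^1$. Your alternative, upgrading weak to strong convergence via ``$\E[AD\Phi^\delta\cdot D\Phi^\delta]\to\E[AD\Phi\cdot D\Phi]$'', is circular: that limit is (part of) the identity you are trying to prove, and you have also silently dropped the $\alpha(\Phi^\delta)^2$ term from the approximate energy identity.

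The paper's missing ingredient is to test not with $\Phi$ but with the \emph{bounded} truncations $\Phi_n=(\Phi\wedge n)\vee(-n)$ (first mollified so they lie in $\mathcal{D}(\O)$, then $\ve\to 0$ using boundedness). After an integration by parts the $S$-term becomes $\E[((D\cdot S)\cdot D\Phi)\Phi_n]$, which is well defined since $\Phi_n\in L^\infty$. The algebraic identity $D\Phi\,\Phi_n = D(\Phi\Phi_n-\tfrac12\Phi_n^2)$ then exhibits this as the pairing of the divergence-free field $D\cdot S$ against the gradient of an $\mathcal{H}^1$ function, hence zero \emph{exactly}, for every $n$. The remaining terms involve $D\Phi_n = D\Phi\,\mathbf{1}_{\{|\Phi|\le n\}}$ and pass to the $n\to\infty$ limit by dominated convergence. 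Uniqueness then follows from the energy identity by linearity. This pointwise-truncation trick --- not $\mathcal{H}^1$-density --- is the mechanism that makes the unbounded $S$ harmless.
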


\begin{proof}  We will write $S=(S_{ij})_{i,j\in\{1,\ldots,d\}}\in L^2(\O;\R^{d\times d})$ and for every $n\in\N$ we define
\[S_n=((S_{ij}\wedge n)\vee(-n))_{i,j\in\{1,\ldots,d\}}.\]
The Lax-Milgram theorem proves that there exists a unique $\Phi_n\in \mathcal{H}^1(\O)$ which satisfies
\[\a\Phi_n-D\cdot (A+S_n)D\Phi_n = -D\cdot F\;\;\textrm{in}\;\;\mathcal{H}^{-1}(\O).\]
The boundedness and anti-symmetry of $S_n$, the uniform ellipticity of $A$, H\"older's inequality, and Young's inequality prove that there exists $c\in(0,\infty)$ such that, for each $n\in\N$,
\begin{equation}\label{cor_1}\E\left[\a\Phi_n^2+\abs{D\Phi_n}^2\right]\leq c \E\left[\abs{F}^2\right].\end{equation}
It follows from \eqref{cor_1} that there exists $\Phi\in \mathcal{H}^1(\O)$ such that, after passing to a subsequence, as $n\rightarrow\infty$,
\begin{equation}\label{cor_2}\Phi_n\rightharpoonup\Phi\;\;\textrm{weakly in}\;\;\mathcal{H}^1(\O).\end{equation}
Since $S_n\rightarrow S$ strongly in $L^2(\O,\R^{d\times d})$, it follows from \eqref{cor_2} and $\mathcal{D}(\O)\subseteq \mathcal{H}^1(\O)$ that $\Phi$ solves
\begin{equation}\label{cor_3}\a\Phi-D\cdot(A+S)D\Phi=-D\cdot F\;\;\textrm{in}\;\;\mathcal{D}'(\O).\end{equation}
Uniqueness is an immediate consequence of the linearity, the uniform ellipticity of $A$, and the energy estimate.  Therefore, it remains only to prove the energy estimate \eqref{cor_000}.  The skew-symmetry of $S$ and $D\cdot(D\cdot S)=0$ prove that, for every $\psi\in D(\O)$,
\begin{equation}\label{cor_4}\E\left[SD\Phi\cdot D\psi\right]=-\E\left[(D\cdot S)\cdot D\psi \Phi\right]=\E\left[\left((D\cdot S)\cdot D\Phi\right)\psi\right].\end{equation}
For each $n\in \N$ let $\Phi_n=(\Phi\wedge n)\vee(-n)$ and for every $\ve\in(0,1)$ let $\rho^\ve$ denote a standard convolution kernel of scale $\ve\in(0,1)$.  For every $n\in\N$ and $\ve\in(0,1)$ let
\[\Phi_{n,\ve}(\o)=\int_{\R^d}\Phi_n(\tau_x\o)\rho^\ve(x)\dx.\]
It follows by definition that the $\Phi_{n,\ve}$ are admissible test functions for \eqref{cor_3} and, after using the boundedness of $\Phi_n$ to pass to the limit $\ve\rightarrow 0$, it follows from \eqref{cor_3} and \eqref{cor_4} that
\begin{equation}\label{cor_5}\E\left[\a\Phi\Phi_n+AD\Phi\cdot D\Phi_n\right]=-\E\left[(D\cdot S)D\Phi\Phi_n\right]+\E\left[F\cdot D\Phi_n\right].\end{equation}
Since the distributional equality $D\Phi_n=D\Phi\mathbf{1}_{\{\abs{\Phi}\leq n\}}$ proves the distributional equality $D\Phi\Phi_n=D\left(\Phi\Phi_n-\nicefrac{1}{2}\Phi_n^2\right)$, the boundedness of $\Phi_n$ and $D\cdot(D\cdot S)=0$ prove that
\begin{equation}\label{cor_6}\E\left[(D\cdot S)D\Phi\Phi_n\right]=\E\left[(D\cdot S)\cdot D\left(\Phi\Phi_n-\nicefrac{1}{2}\Phi_n^2\right)\right]=0.\end{equation}
It then follows from \eqref{cor_5}, \eqref{cor_6}, and $D\Phi_n=D\Phi\mathbf{1}_{\{\abs{\Phi}\leq n\}}$ that
\begin{equation}\label{cor_7}\E\left[\a\Phi\Phi_n+AD\Phi\cdot D\Phi\mathbf{1}_{\{\abs{\Phi}\leq n\}}\right]=\E\left[F\cdot D\Phi\textbf{1}_{\{\abs{\Phi}\leq n\}}\right].\end{equation}
The energy estimate \eqref{cor_000} then follows by the dominated convergence theorem, after passing to the limit $n\rightarrow\infty$ in \eqref{cor_7}.  This completes the proof.\end{proof}

\begin{prop}\label{prop_corrector}  Assume \eqref{steady} under the weaker assumption $S\in L^2(\O;\R^{d\times d})$.  Let $F\in L^2(\O;\R^d)$.  Then there exists a unique $\Phi\in L^2_{\textrm{pot}}(\O)$ which satisfies the equation
\begin{equation}\label{cor_0008}-D\cdot (A+S)\Phi=-D\cdot F\;\;\textrm{in}\;\;D'(\O).\end{equation}
Furthermore, $\Phi$ satisfies the energy identity
\begin{equation}\label{cor_008}\E\left[A\Phi\cdot\Phi\right]=\E\left[F\cdot \Phi\right].\end{equation}
\end{prop}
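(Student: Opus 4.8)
The plan is to construct $\Phi$ as the weak limit, along $\alpha\to 0$, of the horizontal gradients $D\Phi_\alpha$ of the approximate correctors from Proposition~\ref{prop_approx_cor}, and then to extract the energy identity \eqref{cor_008} by passing to the limit in \eqref{cor_000}. For each $\alpha\in(0,1)$, let $\Phi_\alpha\in\mathcal{H}^1(\O)$ solve $\alpha\Phi_\alpha-D\cdot(A+S)D\Phi_\alpha=-D\cdot F$, which by Proposition~\ref{prop_approx_cor} obeys \eqref{cor_000}. The uniform ellipticity of $A$, H\"older's inequality, and Young's inequality applied to \eqref{cor_000} give $\E[\abs{D\Phi_\alpha}^2]\leq c\,\E[\abs{F}^2]$ and $\alpha\,\E[\Phi_\alpha^2]\leq c\,\E[\abs{F}^2]$, uniformly in $\alpha$; in particular $\norm{\alpha\Phi_\alpha}_{L^2(\O)}^2=\alpha\cdot\bigl(\alpha\E[\Phi_\alpha^2]\bigr)\to 0$ as $\alpha\to 0$. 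Since $L^2_{\textrm{pot}}(\O)$ is a closed, hence weakly closed, subspace of $L^2(\O;\R^d)$, there is a sequence $\alpha_k\downarrow 0$ with $D\Phi_{\alpha_k}\rightharpoonup\Phi$ weakly in $L^2_{\textrm{pot}}(\O)$.

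To see that $\Phi$ solves \eqref{cor_0008}, fix $\psi\in\mathcal{D}(\O)$. The horizontal gradient $D\psi$ is bounded, so $S^tD\psi\in L^2(\O;\R^d)$ and $\E[SD\Phi_{\alpha_k}\cdot D\psi]=\E[D\Phi_{\alpha_k}\cdot S^tD\psi]\to\E[\Phi\cdot S^tD\psi]=\E[S\Phi\cdot D\psi]$; likewise $\E[AD\Phi_{\alpha_k}\cdot D\psi]\to\E[A\Phi\cdot D\psi]$, while $\alpha_k\E[\Phi_{\alpha_k}\psi]\to 0$ by the strong convergence of $\alpha_k\Phi_{\alpha_k}$ in $L^2(\O)$. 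Passing to the limit in the weak form of the $\alpha_k$-equation gives $\E[(A+S)\Phi\cdot D\psi]=\E[F\cdot D\psi]$ for every $\psi\in\mathcal{D}(\O)$, which is \eqref{cor_0008}; the curl-free constraint and the vanishing of the mean pass to the weak limit, so indeed $\Phi\in L^2_{\textrm{pot}}(\O)$.

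It remains to establish \eqref{cor_008} and uniqueness. Weak lower semicontinuity of $v\mapsto\E[Av\cdot v]$ together with $\E[F\cdot D\Phi_{\alpha_k}]\to\E[F\cdot\Phi]$ give, from \eqref{cor_000}, the inequality $\E[A\Phi\cdot\Phi]\leq\E[F\cdot\Phi]$. For the reverse inequality I would show that $D\Phi_{\alpha_k}$ in fact converges strongly in $L^2(\O;\R^d)$; then \eqref{cor_000} passes to the limit, the term $\alpha_k\E[\Phi_{\alpha_k}^2]$ is forced to vanish, and \eqref{cor_008} follows, after which the same truncation analysis applied to the difference of two solutions (which corresponds to the data $F=0$) delivers $\E[A(\Phi_1-\Phi_2)\cdot(\Phi_1-\Phi_2)]=0$ and hence uniqueness by the uniform ellipticity of $A$. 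The strong convergence is obtained by testing the difference of the $\alpha$- and $\beta$-equations against $\Phi_\alpha-\Phi_\beta$, which is legitimate precisely because $\Phi_\alpha-\Phi_\beta\in\mathcal{H}^1(\O)$ is a genuine function: as in the proof of \eqref{cor_000}, one truncates it at level $n$ to $w_n$, mollifies into $\mathcal{D}(\O)$, and uses the distributional identity $D\bigl((\Phi_\alpha-\Phi_\beta)\,w_n-\tfrac12 w_n^2\bigr)=D(\Phi_\alpha-\Phi_\beta)\,w_n$ together with $D\cdot(D\cdot S)=0$ to cancel the stream-matrix contribution exactly, leaving $\E[AD(\Phi_\alpha-\Phi_\beta)\cdot D(\Phi_\alpha-\Phi_\beta)]=-\E[(\alpha\Phi_\alpha-\beta\Phi_\beta)(\Phi_\alpha-\Phi_\beta)]$, whose right-hand side is controlled as $\alpha,\beta\to 0$ using the bounds from the first step.

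The main obstacle is the handling of the unbounded stream matrix, which the introduction identifies as the technical core (cf.\ \cite[Lemma~3.27]{Oel1988}). Because $S$ is only $L^2(\O)$-integrable and $\Phi\in L^2_{\textrm{pot}}(\O)$, the product $S\Phi$ lies merely in $L^1(\O)$, so the drift energy $\E[S\Phi\cdot\Phi]$ — which by skew-symmetry of $S$ one would like to discard — is not a priori absolutely convergent. One therefore may not test \eqref{cor_0008} against $\Phi$ directly, nor against the non-stationary, possibly unbounded physical potential of $\Phi$: neither is a truncatable object carrying a \emph{stationary} truncation. The resolution, already exploited in the proof of \eqref{cor_000}, is to perform every truncation one level below the limit, at the $\mathcal{H}^1(\O)$-regular approximate correctors $\Phi_\alpha$, which \emph{are} honest functions that can be truncated; at each finite truncation level all relevant products are integrable, the identity $D(\Phi_\alpha w_n-\tfrac12 w_n^2)=D\Phi_\alpha\,w_n$ together with $D\cdot(D\cdot S)=0$ eliminates the drift contribution, and the truncation is removed only afterward. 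Carrying this out with sufficient uniformity in the resolvent parameter to survive the passage $\alpha\to 0$ — notably in the difference-equation estimate of the previous paragraph — is where the real work lies.
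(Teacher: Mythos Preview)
Your construction of the weak subsequential limit $\Phi\in L^2_{\textrm{pot}}(\O)$ solving \eqref{cor_0008} is correct and coincides with the paper's. The gap is in your route to the energy identity. The difference-equation test does yield
\[
\E\bigl[AD(\Phi_\alpha-\Phi_\beta)\cdot D(\Phi_\alpha-\Phi_\beta)\bigr]=-\E\bigl[(\alpha\Phi_\alpha-\beta\Phi_\beta)(\Phi_\alpha-\Phi_\beta)\bigr],
\]
but the only a priori control from Proposition~\ref{prop_approx_cor} is $\alpha\E[\Phi_\alpha^2]\leq c$, i.e.\ $\norm{\Phi_\alpha}_{L^2}\lesssim\alpha^{-1/2}$. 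Expanding the right-hand side and applying Cauchy--Schwarz produces terms bounded by $\sqrt{\alpha/\beta}+\sqrt{\beta/\alpha}\geq 2$, which do not vanish as $\alpha,\beta\to 0$; there is no smallness to extract. The truncation device of Proposition~\ref{prop_approx_cor} lets you \emph{derive} the identity above, but it does not supply the missing decay---you correctly flag this step as ``where the real work lies'', and it is not carried out. Note also that even granting strong convergence, passing to the limit in \eqref{cor_000} only shows that $\alpha\E[\Phi_\alpha^2]$ converges to $\E[F\cdot\Phi]-\E[A\Phi\cdot\Phi]\geq 0$, not that this limit is zero.

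The paper does not attempt to upgrade the convergence of $D\Phi_\alpha$. It proves \eqref{cor_008} for the limit $\Phi$ directly, on physical space: one integrates $\Phi$ to a non-stationary potential $\phi\in H^1_{\textrm{loc}}(\R^d)$, tests the equation on $\R^d$ against $\phi_R\eta_R$ with $\phi_R=(\phi\wedge R)\vee(-R)$ and $\eta_R$ a smooth cutoff at scale $R$, and uses skew-symmetry to push the stream-matrix contribution onto $\nabla\eta_R$ (see \eqref{cor_12}--\eqref{cor_15}). The resulting error terms---schematically $R^{-1}(\fint_{B_{2R}}\abs{s}^2)^{1/2}(\fint_{B_{2R}}\abs{\nabla\phi}^2\phi_R^2)^{1/2}$---are shown to vanish as $R\to\infty$ by combining the ergodic theorem, a splitting on $\{\abs{\nabla\phi}\leq K\}$ exploiting the stationarity of $\nabla\phi$, and the almost-sure \emph{sublinearity} of $\phi$ from Proposition~\ref{prop_sublinear}. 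That last ingredient is what your sketch is missing: sublinear growth of the physical potential substitutes for the unavailable stationary $L^2$ bound on the potential itself, and it is what permits the truncation level to be tied to the spatial scale.
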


\begin{proof}  For every $\a\in(0,1)$ let $\Phi_\a\in \mathcal{H}^1(\O)$ be the unique solution of 
\begin{equation}\label{cor_08}\a\Phi_\a-D\cdot (A+S)D\Phi_\a=-D\cdot F\;\;\textrm{in}\;\;D'(\O).\end{equation}
It follows from \eqref{cor_000}, the uniform ellipticity, H\"older's inequality, and Young's inequality that, for some $c\in(0,\infty)$ independent of $\a\in(0,1)$,
\[\E\left[\a(\Phi_\a)^2+\abs{D\Phi_\a}^2\right]\leq c\E\left[\abs{F}^2\right].\]
Therefore, after passing to a subsequence $\a\rightarrow 0$, there exists $\Phi\in L^2_{\textrm{pot}}(\O)$ such that
\begin{equation}\label{cor_8}a\Phi_\a\rightarrow 0\;\;\textrm{strongly in}\;\;L^2(\O)\;\;\textrm{and}\;\;D\Phi_\a\rightharpoonup \Phi\;\;\textrm{weakly in}\;\;L^2_\textrm{pot}(\O).\end{equation}
It follows from \eqref{cor_08} and \eqref{cor_8} that
\begin{equation}\label{cor_9}-D\cdot (A+S)\Phi=-D\cdot F\;\;\textrm{in}\;\;D'(\O).\end{equation}
This completes the proof of existence.

We will now prove the energy identity \eqref{cor_008}.  Since $\Phi\in L^2_\textrm{pot}(\O)$ is curl-free and satisfies \eqref{cor_9}, by integration let $\phi\colon\R^d\times\O\rightarrow \R$ be the unique function almost surely satisfying $\fint_{B_1}\phi = 0$, $\phi\in H^1_\textrm{loc}(\R^d)$ with $\nabla\phi(x,\o)=\Phi(\tau_x\o)$, and, for every $\psi\in\C^\infty_c(\R^d)$,
\[\int_{\R^d}(a+s)\nabla\phi\cdot\nabla\psi =\int_{\R^d}f\cdot \nabla\psi,\]
for $a(x,\omega)=A(\tau_x\o)$, $s(x,\o)=S(\tau_x\o)$, and $f(x,\o)=F(\tau_x\o)$.   Since $\nabla\cdot(\nabla\cdot s)= 0$ almost surely on $\R^d$, a repetition of the argument from Proposition~\ref{prop_approx_cor} proves that, for every $\psi\in\C^\infty_c(\R^d)$,
\[\int_{\R^d}s\nabla\phi\cdot\nabla\psi = \int_{\R^d}(\nabla\cdot s)\cdot \nabla\phi \psi.\]
Therefore, almost surely for every $\psi\in\C^\infty_c(\R^d)$,
\begin{equation}\label{cor_11}\int_{\R^d}a\nabla\phi\cdot\nabla\psi+\left((\nabla\cdot s)\cdot\nabla\phi\right)\psi =\int_{\R^d}f\cdot \nabla\psi.\end{equation}
Let $\eta\colon\R^d\rightarrow[0,1]$ be a smooth function satisfying $\eta=1$ on $\overline{B}_1$ and $\eta=0$ on $\R^d\setminus B_2$, and for every $R\in(0,\infty)$ let $\eta_R(x)=\eta(\nicefrac{x}{R})$.  For every $\ve\in(0,1)$ let $\rho^\ve\in\C^\infty_c(\R^d)$ be a standard convolution kernel of scale $\ve\in(0,1)$.  For every $n\in\N$ let $\phi_n=(\phi\wedge n)\vee(-n)$.  Then for every $R\in(0,\infty)$, $\ve\in(0,1)$, and $n\in\N$, the function $(\phi_n*\rho^\ve)\eta_R$ is an admissible test function for \eqref{cor_11}.  Using the boundedness of $\phi_n$ to pass to the limit $\ve\rightarrow 0$, we have almost surely for every $n\in\N$ that
\begin{equation}\label{cor_12}\int_{\R^d}a\nabla\phi\cdot\nabla\phi_n\eta_R+a\nabla\phi\cdot \nabla\eta_R\phi_n +\left((\nabla\cdot s)\cdot\nabla\phi\right)\phi_n\eta_R =\int_{\R^d}f\cdot \nabla\phi_n\eta_R+f\cdot \nabla\eta_R\phi_n.\end{equation}
The distributional equality $\nabla\phi\phi_n=\nabla\left(\phi\phi_n-\nicefrac{1}{2}\phi_n^2\right)$, the fact that $(\nabla\cdot s)$ is divergence-free, and the skew-symmetry of $s$ prove that
\begin{equation}\label{cor_014}\int_{\R^d}\left((\nabla\cdot s)\cdot\nabla\phi\right)\phi_n\eta_R=-\int_{\R^d}\left((\nabla\cdot s)\cdot \nabla\eta_R\right)\left(\phi\phi_n-\nicefrac{1}{2}\phi_n^2\right)=\int_{\R^d}\left(s\nabla\phi\cdot \nabla \eta_R\right)\phi_n.\end{equation}
It then follows from \eqref{cor_12}, \eqref{cor_014}, and the distributional equality $\nabla\phi_n=\nabla\phi\mathbf{1}_{\{\abs{\phi}\leq n\}}$ that
\begin{align}\label{cor_14}
& \int_{\R^d}a\nabla\phi\cdot\nabla\phi\mathbf{1}_{\{\abs{\phi}\leq n\}}\eta_R-\int_{\R^d}f\cdot \nabla\phi\mathbf{1}_{\{\abs{\phi}\leq n\}}\eta_R
\\ \nonumber & =\int_{\R^d}f\cdot \nabla\eta_R\phi_n- a\nabla\phi\cdot \nabla\eta_R\phi_n -\left(s\nabla\phi\cdot \nabla \eta_R\right)\phi_n.
\end{align}
For each $R\in(0,\infty)$ let $c_R=\int_{\R^d}\eta_R$.  It follows from the definition of $\eta_R$ that $\abs{B_R}\leq c_R\leq \abs{B_{2R}}$.  We now make the choice $n=R$.  It then follows from the definition of $\eta_R$, the definition of $c_R$, the definition of $\phi_n$, the uniform ellipticity, \eqref{cor_14}, and H\"older's inequality that, for some $c\in(0,\infty)$ independent of $R$,
\begin{align}\label{cor_15}
& \abs{c_R^{-1}\int_{\R^d}a\nabla\phi\cdot\nabla\phi\mathbf{1}_{\{\abs{\phi}\leq R\}}\eta_R-c_R^{-1}\int_{\R^d}f\cdot \nabla\phi\mathbf{1}_{\{\abs{\phi}\leq R\}}\eta_R}
\\ \nonumber & \leq c\left(\frac{1}{R}\left(\fint_{B_{2R}}\abs{\phi}^2\right)^\frac{1}{2}\left(\fint_{B_{2R}}\abs{f}^2+\abs{\nabla\phi}^2\right)^\frac{1}{2}+\frac{1}{R}\left(\fint_{B_{2R}}\abs{s}^2\right)^\frac{1}{2}\left(\fint_{B_{2R}}\abs{\nabla\phi}^2\phi_R^2\right)^\frac{1}{2}\right).
\end{align}
The difficulty in the proof is that, since $\phi$ is not itself stationary, it is not obvious for instance that
\begin{equation}\label{cor_16}\lim_{R\rightarrow\infty}c_R^{-1}\int_{\R^d}a\nabla\phi\cdot\nabla\phi\mathbf{1}_{\{\abs{\phi}\leq R\}}\eta_R =\E\left[a\Phi\cdot\Phi\right],\end{equation}
as is formally suggested by the ergodic theorem.  We will prove \eqref{cor_16} using the sublinearity of $\phi$.  For each $R\in(0,\infty)$ we write
\[c_R^{-1}\int_{\R^d}a\nabla\phi\cdot\nabla\phi\mathbf{1}_{\{\abs{\phi}\leq R\}}\eta_R= c_R^{-1}\int_{\R^d}a\nabla\phi\cdot\nabla\phi\eta_R-c_R^{-1}\int_{\R^d}a\nabla\phi\cdot\nabla\phi\mathbf{1}_{\{\abs{\phi}>R\}}\eta_R.\]
Since the ergodic theorem proves almost surely that
\[\E\left[A\Phi\cdot\Phi\right] =\lim_{R\rightarrow\infty}c_R^{-1}\int_{\R^d}a\nabla\phi\cdot\nabla\phi\eta_R,\]
it remains only to prove almost surely that
\begin{equation}\label{cor_18}\lim_{R\rightarrow\infty}c_R^{-1}\int_{\R^d}a\nabla\phi\cdot\nabla\phi\mathbf{1}_{\{\abs{\phi}>R\}}\eta_R=0.\end{equation}
Chebyshev's inequality and the definition of $\eta_R$ prove that, for $c\in(0,\infty)$ independent of $R\in(0,\infty)$,
\[c_R^{-1}\abs{\{\abs{\phi}> R\}\cap \Supp(\eta_R)}\leq c_R^{-1}\abs{\{\abs{\phi}> R\}\cap B_{2R}}\leq \frac{c}{R^2}\fint_{B_{2R}}\abs{\phi}^2,\]
from which we almost surely conclude using Proposition~\ref{prop_sublinear} that
\begin{equation}\label{cor_19}\limsup_{R\rightarrow\infty}c_R^{-1}\abs{\{\abs{\phi}> R\}\cap \Supp(\eta_R)}\leq \limsup_{R\rightarrow\infty}\frac{c}{R^2}\fint_{B_{2R}}\abs{\phi}^2=0.\end{equation}
We now exploit the stationarity of $\nabla\phi$.  For each $R\in(0,\infty)$ and $K\in\N$, the uniform ellipticity and the definitions of $\eta_R$ and $c_R$ prove that, for some $c\in(0,\infty)$ independent of $R$ and $K$,
\begin{align}\label{cor_0000020}
\abs{c_R^{-1}\int_{\R^d}a\nabla\phi\cdot\nabla\phi\mathbf{1}_{\{\abs{\phi}>R\}}\eta_R} & \leq \abs{c_R^{-1}\int_{\R^d}a\nabla\phi\cdot\nabla\phi\mathbf{1}_{\{\abs{\phi}>R,\abs{\nabla\phi}\leq K\}}\eta_R}
\\ \nonumber &  \quad +c\abs{\fint_{B_{2R}}\abs{\nabla\phi}^2\mathbf{1}_{\{\abs{\nabla\phi}>K\}}}.
\end{align}
After applying the dominated convergence theorem, the ergodic theorem, the stationarity of $\nabla\phi$, and \eqref{cor_19} to \eqref{cor_0000020}, we have almost surely for every $K\in\N$ that
\[\limsup_{R\rightarrow\infty}\abs{c_R^{-1}\int_{\R^d}a\nabla\phi\cdot\nabla\phi\mathbf{1}_{\{\abs{\phi}>R\}}\eta_R} \leq c\E\left[\abs{\Phi}^2\mathbf{1}_{\{\abs{\Phi}>K\}}\right].\]
Therefore, since $\Phi\in L^2_{\textrm{pot}}(\O)$, after passing to the limit $K\rightarrow\infty$ we conclude the proof of \eqref{cor_18} and therefore the proof of \eqref{cor_16}.  The identical proof shows almost surely that
\begin{equation}\label{cor_020}\lim_{R\rightarrow\infty} c_R^{-1}\int_{\R^d}f\cdot\nabla\phi\mathbf{1}_{\{\abs{\phi}\leq R\}}=\E\left[F\cdot\Phi\right].\end{equation}
It remains to treat the two terms on the righthand side of \eqref{cor_15}.  Proposition~\ref{prop_sublinear}, $F,\Phi\in L^2(\O;\R^d)$, and the ergodic theorem prove almost surely that
\begin{equation}\label{cor_20}\lim_{R\rightarrow\infty}\left[\frac{1}{R}\left(\fint_{B_{2R}}\abs{\phi}^2\right)^\frac{1}{2}\left(\fint_{B_{2R}}\abs{f}^2+\abs{\nabla\phi}^2\right)^\frac{1}{2}\right]=0.\end{equation}
For the final term on the righthand side of \eqref{cor_15}, it follows from the definition of $\phi_R$ and the triangle inequality that, for each $K\in\N$ and $R\in(0,\infty)$,
\begin{align*}
& \frac{1}{R}\left(\fint_{B_{2R}}\abs{s}^2\right)^\frac{1}{2}\left(\fint_{B_{2R}}\abs{\nabla\phi}^2\phi_R^2\right)^\frac{1}{2}
\\ & \leq \frac{1}{R}\left(\fint_{B_{2R}}\abs{s}^2\right)^\frac{1}{2}\left(\fint_{B_{2R}\cap\{\abs{\nabla\phi}\leq K\}}\abs{\nabla\phi}^2\phi_R^2\right)^\frac{1}{2}
\\ & \quad+ \frac{1}{R}\left(\fint_{B_{2R}}\abs{s}^2\right)^\frac{1}{2}\left(\fint_{B_{2R}\cap\{\abs{\nabla\phi}>K\}}\abs{\nabla\phi}^2\phi_R^2\right)^\frac{1}{2}
\\ & \leq \frac{K}{R}\left(\fint_{B_{2R}}\abs{s}^2\right)^\frac{1}{2}\left(\fint_{B_{2R}}\phi^2\right)^\frac{1}{2} +\left(\fint_{B_{2R}}\abs{s}^2\right)^\frac{1}{2}\left(\fint_{B_{2R}\cap\{\abs{\nabla\phi}>K\}}\abs{\nabla\phi}^2\right)^\frac{1}{2}.
\end{align*}
The ergodic theorem, the stationarity of $\nabla\phi$, $S\in L^2(\O;\R^{d\times d})$, and Proposition~\ref{prop_sublinear} then prove almost surely that, for some $c\in(0,\infty)$ independent of $K\in\N$,
\[\limsup_{R\rightarrow\infty}\frac{1}{R}\left(\fint_{B_{2R}}\abs{s}^2\right)^\frac{1}{2}\left(\fint_{B_{2R}}\abs{\nabla\phi}^2\phi_R^2\right)^\frac{1}{2}\leq c\E\left[\abs{\Phi}^2\mathbf{1}_{\{\abs{\Phi}>K\}}\right].\]
Since $\Phi\in L^2_{\textrm{pot}}(\O)$, after passing to the limit $K\rightarrow\infty$ we conclude almost surely that
\begin{equation}\label{cor_22}\limsup_{R\rightarrow\infty}\frac{1}{R}\left(\fint_{B_{2R}}\abs{s}^2\right)^\frac{1}{2}\left(\fint_{B_{2R}}\abs{\nabla\phi}^2\phi_R^2\right)^\frac{1}{2}=0.\end{equation}
In combination \eqref{cor_15}, \eqref{cor_16}, \eqref{cor_020}, \eqref{cor_20}, and \eqref{cor_22} prove that
\begin{equation}\label{cor_23}\E\left[A\Phi\cdot\Phi\right]=\E\left[F\cdot\Phi\right],\end{equation}
which complete the proof of the energy identity.

It remains only to prove uniqueness.  Suppose that $\Phi_1, \Phi_2\in L^2_{\textrm{pot}}(\O)$ satisfy \eqref{cor_0008} and \eqref{cor_008}.  Then by linearity the difference $\Phi_1-\Phi_2$ satisfies both \eqref{cor_0008} and \eqref{cor_008} with $F=0$.  The uniform ellipticity and the energy identity \eqref{cor_23} prove that
\[\lambda \E\left[\abs{\Phi_1-\Phi_2}^2\right]\leq \E\left[A\cdot(\Phi_1-\Phi_2)\cdot(\Phi_1-\Phi_2)\right]=0,\]
which proves that $\Phi_1=\Phi_2$ in $L^2_\textrm{pot}(\O)$ and completes the proof.  \end{proof}

\subsection{The homogenization flux corrector}\label{sec_flux_cor}  We will now construct the skew-symmetric flux correctors $\sigma_i$ satisfying \eqref{intro_flux}.  Let $p_d\in(1,2)$ denote the integrability exponent
\begin{equation}\label{pd_exponent}p_d=\frac{2d}{d+2}\;\;\textrm{if}\;\;d\geq 3\;\;\textrm{and}\;\;p_d=\frac{4+2\d}{4+\d}\;\;\textrm{if}\;\;d=2,\end{equation}
and for each $i\in\{1,\ldots,d\}$, using H\"older's inequality, let $Q_i\in L^{p_d}(\O;\R^d)$ be befined by
\[Q_i=(A+S)(\Phi_i+e_i),\]
for the corrector fields $\Phi_i\in L^2_{\textrm{pot}}(\O)$ satisfying \eqref{re_1} constructed in Proposition~\ref{prop_corrector}.  We will identify the flux correctors $\sigma_i=(\sigma_{ijk})$ by their stationary gradients $\Sigma_{ijk}$ satisfying the equation
\[-D\cdot \Sigma_{ijk}=D_jQ_{ik}-D_k Q_{ij}\;\;\textrm{in}\;\;\mathcal{D}'(\O).\]
We construct the $\Sigma_{ijk}$ in Proposition~\ref{prop_flux} below.  In Proposition~\ref{prop_flux_corrector} below, we prove that the resulting skew-symmetric matrices $\sigma_i$ defined on $\R^d$ by integration almost surely satisfy $\nabla\cdot\sigma_i= q_i$ for $q_i(x,\o)=Q_i(\tau_x\o)-\E[Q_i]$.  The proof of existence and uniqueness for the flux correctors is an extension of \cite[Lemma~1]{BelFehOtt2018}.
\begin{prop}\label{prop_flux}  Assume \eqref{steady}.  Let $p\in(1,\infty)$ and let $F\in L^p(\O;\R^d)$.  Then there exists a unique weak solution $\Phi\in L^p(\O;\R^d)$ of the equation
\begin{equation}\label{pf_1}-D\cdot \Phi= -D\cdot F\;\;\textrm{in}\;\;\mathcal{D}'(\O),\end{equation}
with $\E[\Phi]=0$ and such that, for every $i,j\in\{1,2,\ldots,d\}$,
\begin{equation}\label{pf_2}D_i\Phi_j=D_j\Phi_i\;\;\textrm{in}\;\;\mathcal{D}'(\O).\end{equation}
\end{prop}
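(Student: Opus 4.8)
The plan is to reduce the whole statement to one uniform $L^p$-estimate for the Helmholtz (Leray) projection onto curl-free fields, after which existence is an exercise in density and weak compactness and uniqueness follows from a Liouville argument. Since the $D_i$ are generated by the commuting, measure-preserving translations $\{\tau_x\}_{x\in\R^d}$, they are pairwise commuting skew-adjoint operators on $L^2(\O)$, so $\Delta_\O:=\sum_{i=1}^dD_i^2$ is a non-positive self-adjoint operator. For $\a\in(0,1)$ and $G\in L^2(\O;\R^d)$ let $u_\a\in\mathcal{H}^1(\O)$ be the Lax--Milgram solution of $\a u_\a-\Delta_\O u_\a=-D\cdot G$ and set $\Phi_\a:=Du_\a$. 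Testing against $\psi\in\mathcal{D}(\O)$ and using $D_iD_j\psi=D_jD_i\psi$ together with the invariance $\E[\psi\circ\tau_x]=\E[\psi]$ shows that $\Phi_\a$ is curl-free in $\mathcal{D}'(\O)$, has $\E[\Phi_\a]=0$, and satisfies $\a u_\a-D\cdot\Phi_\a=-D\cdot G$ in $\mathcal{D}'(\O)$.

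The crux of the proof — and the one real obstacle — is the bound
\[\norm{\Phi_\a}_{L^q(\O;\R^d)}\leq c(q,d)\norm{G}_{L^q(\O;\R^d)}\qquad\text{for every }q\in(1,\infty),\]
with $c(q,d)$ independent of $\a\in(0,1)$. I would obtain it by transference. Writing $\Phi_\a=T_\a G$, the operator $T_\a$ is, in the joint spectral calculus of $D_1,\ldots,D_d$ over $\R^d$, the Fourier multiplier with matrix symbol $m^\a(\xi)=\frac{\xi\otimes\xi}{\a+\abs{\xi}^2}$, and this family satisfies the Mihlin--H\"ormander bounds $\abs{\partial^\gamma m^\a(\xi)}\leq C_\gamma\abs{\xi}^{-\abs{\gamma}}$ uniformly in $\a\geq 0$; hence the $m^\a$ are $L^q(\R^d;\R^d)$-multipliers with a norm bounded uniformly in $\a$. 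Since $\{\tau_x\}_{x\in\R^d}$ acts on $L^q(\O)$ as a strongly continuous group of isometries (strong continuity being clear on the dense subspace $\mathcal{D}(\O)$), the Coifman--Weiss transference principle carries this bound over to the asserted bound for $T_\a$ on $L^q(\O;\R^d)$. Equivalently, one may realize $\Phi_\a(\o)=-\int_{\R^d}(\nabla^2G_\a)(x)\,G(\tau_{-x}\o)\dx$, with $G_\a$ the Bessel kernel of $\a-\Delta$, and combine the classical Calder\'on--Zygmund estimate with the ergodic theorem.

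For existence, apply the construction with $G=F$ to any $F\in L^2(\O;\R^d)$: the $L^2$-energy identity gives $\norm{Du_\a}_{L^2}\leq\norm{F}_{L^2}$ and $\norm{\a u_\a}_{L^2}\leq\a^{1/2}\norm{F}_{L^2}\to0$, so by the bound above and reflexivity a subsequence has $\Phi_\a\rightharpoonup\Phi$ weakly in every $L^q(\O;\R^d)$ with $\norm{\Phi}_{L^q}\leq c(q,d)\norm{F}_{L^q}$; letting $\a\to0$ in the identity $\a\E[u_\a\psi]+\E[\Phi_\a\cdot D\psi]=\E[F\cdot D\psi]$ for $\psi\in\mathcal{D}(\O)\subseteq L^2(\O)$ — the first term vanishing — yields $-D\cdot\Phi=-D\cdot F$ in $\mathcal{D}'(\O)$, and the curl-free and mean-zero conditions pass to the weak limit as linear distributional constraints. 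This defines a linear map $\mathcal{P}\colon L^2(\O;\R^d)\to L^2(\O;\R^d)$, $F\mapsto\Phi$, with $\norm{\mathcal{P}F}_{L^q}\leq c(q,d)\norm{F}_{L^q}$ for every $q\in(1,\infty)$. If $p\geq2$ then $L^p(\O;\R^d)\subseteq L^2(\O;\R^d)$ and $\Phi=\mathcal{P}F$ is the required solution; if $p\in(1,2)$, then $\mathcal{P}$ is bounded for the $L^p$-norm on the dense subspace $L^2(\O;\R^d)$ and extends to a bounded operator $L^p(\O;\R^d)\to L^p(\O;\R^d)$, whose value $\Phi$ on $F\in L^p(\O;\R^d)$ retains the three defining properties by approximating $F$ in $L^p$ by $L^2$-fields and passing to the limit in the distributional identities (testing against bounded $\psi\in\mathcal{D}(\O)$ being continuous in $L^p$).

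For uniqueness, if $\Phi^{(1)},\Phi^{(2)}\in L^p(\O;\R^d)$ both satisfy the conclusions then $\Psi:=\Phi^{(1)}-\Phi^{(2)}$ obeys $D\cdot\Psi=0$, $D_i\Psi_j=D_j\Psi_i$, and $\E[\Psi]=0$. Since $\Psi$ is curl-free with vanishing expectation, Proposition~\ref{prop_sublinear} yields an almost sure realization $\psi\in W^{1,p}_{\textrm{loc}}(\R^d)$ with $\nabla\psi(x,\o)=\Psi(\tau_x\o)$ that is sublinear in the averaged sense $R^{-1}\fint_{B_R}\abs{\psi}\to0$; and since $D\cdot\Psi=0$ lifts to $\nabla\cdot(\Psi(\tau_\cdot\o))=0$ in $\mathcal{D}'(\R^d)$, the function $\psi$ is almost surely weakly harmonic, hence harmonic. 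A harmonic function with such averaged sublinear growth is constant by the interior gradient estimate, so $\nabla\psi\equiv0$, i.e.\ $\Psi=0$ in $L^p(\O;\R^d)$, which completes the proof.
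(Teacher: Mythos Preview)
Your proof is correct and follows the same overall architecture as the paper: resolvent regularisation $\a u_\a-\Delta_\O u_\a=-D\cdot F$, a uniform-in-$\a$ $L^p$ bound on $Du_\a$, weak compactness to pass $\a\to 0$, density to reach general $F\in L^p$, and uniqueness via a harmonic Liouville argument on the physical space using Proposition~\ref{prop_sublinear}.

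The one genuine difference is how the key $L^p$ bound is obtained. You invoke the Coifman--Weiss transference principle, observing that $T_\a$ is the transferred version of the Mihlin--H\"ormander multiplier $\xi\otimes\xi/(\a+\abs{\xi}^2)$ and that the measure-preserving action of $\{\tau_x\}$ on $L^q(\O)$ supplies the hypotheses. The paper instead carries out the transference by hand: it restricts first to $F\in\mathcal{D}(\O)^d$, lifts to $\R^d$, writes $\nabla\phi_\a$ explicitly via the heat/Bessel kernel, verifies that $\nabla_x\nabla_y K_\a$ is a Calder\'on--Zygmund kernel with constants independent of $\a$, applies the classical CZ estimate on balls with a cutoff, and then passes to the probability space via the ergodic theorem. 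You even mention this route as an alternative. Your approach is cleaner and more conceptual; the paper's is more self-contained and makes the role of the ergodic theorem explicit. Your uniqueness argument (Weyl's lemma plus the interior gradient estimate for harmonic functions) is likewise a slight streamlining of the paper's, which writes out the same estimate through the heat kernel and mollifies before evaluating at the origin.
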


\begin{proof}  Let $p\in(1,\infty)$.  We will first consider a smooth righthand side $F=(F_1,\ldots,F_d)\in \mathcal{D}(\O)^d$.  The Lax-Milgram theorem proves that, for every $\alpha\in(0,1)$, there exists a unique solution $\Phi_\a\in L^2_{\textrm{pot}}(\O)$ of the equation
\[\a\Phi_\a-D\cdot D\Phi_\a= -D\cdot F\;\;\textrm{in}\;\;\mathcal{D}'(\O).\]
Since $\Phi_\a\in L^2_{\textrm{pot}}(\O)$ is mean zero and curl-free, define by integration $\phi_\a\colon\R^d\times\O\rightarrow\R$ which almost surely satisfies $\phi_\a\in H^1_{\textrm{loc}}(\R^d)$, $\nabla\phi_\a(x,\o)=\Phi_\a(\tau_x\o)$, and
\[a\phi_\a-\Delta\phi_\a=-\nabla\cdot f\;\;\textrm{in}\;\;\R^d,\]
for $f(x,\omega)=F(\tau_x\o)$.  Due to the $\C^1$-boundedness of $f$, it follows from the Feynman-Kac formula that
\[\phi_\a(x,\o)=\int_0^\infty\int_{\R^d}e^{-\a s}(4\pi s)^{-\nicefrac{d}{2}}\exp(-\nicefrac{\abs{x-y}^2}{4s})\left(-\nabla_y\cdot f(y,\omega)\right) \dy\ds,\]
from which a direct computation proves almost surely that, for $c\in(0,\infty)$ independent of $\a\in(0,1)$,
\begin{equation}\label{cor_24}\norm{\nabla\phi_\a}_{L^\infty(\R^d;\R^d)}\leq \frac{c}{\a}.\end{equation}
Therefore, almost surely,
\begin{align*}
\nabla\phi_\a(x,\omega) & = \int_0^\infty\int_{\R^d} e^{-\a s}(4\pi s)^{-\nicefrac{d}{2}}\nabla_x\nabla_y\left(\exp(-\nicefrac{\abs{x-y}^2}{4s})\right)f(y,\omega) \dy\ds
\\ & = \int_{\R^d}\nabla_x\nabla_y K_\a(x,y)f(y,\omega) \dy,
\end{align*}
for $K_\a(x,y)=\int_0^\infty e^{-\a s}(4\pi s)^{-\nicefrac{d}{2}}\exp\left(-\nicefrac{\abs{x-y}^2}{4s}\right)\ds$.
A direct computation proves that, for $c\in(0,\infty)$ independent of $\a\in(0,1)$,
\[\abs{\nabla_xK_\a(x,y)}+\abs{\nabla_yK_\a(x,y)}\leq c\abs{x-y}^{1-d}e^{-\sqrt{\a}\abs{x-y}},\]
and, for $c\in(0,\infty)$ independent of $\a\in(0,1)$,
\begin{equation}\label{cor_25}
\abs{\nabla_x\nabla_y K_\a(x,y)} \leq c\abs{x-y}^{-d}e^{-\sqrt{\a}\abs{x-y}}.
\end{equation}
Therefore, $\nabla_x\nabla_y K_\a(x,y)$ defines a Calderon-Zygmund kernel (cf.\ eg.\ Stein \cite{Ste1970}).  Let $\eta\colon\R^d\rightarrow [0,1]$ be a smooth function satisfying $\eta = 1$ on $\overline{B}_1$ and $\eta=0$ on $\R^d\setminus B_2$, and for every $R\in(0,\infty)$ let $\eta_R(x)=\eta(\nicefrac{x}{R})$.  For each $R\in(0,\infty)$ let
\[\nabla\phi_{\a,R}(x,\omega)= \int_{\R^d}\nabla_x\nabla_y K_\a(x,y)\eta_R(y)f(y,\omega)\dy.\]
It follows almost surely from \eqref{cor_25} that, for constants $c_1,c_2\in(0,\infty)$ independent of $\a\in(0,1)$,
\begin{equation}\label{cor_26}\sup_{x\in B_{\nicefrac{R}{2}}}\abs{\nabla\phi_\a(x,\omega)-\nabla\phi_{\a,R}(x,\omega)}\leq c_1\norm{F}_{L^\infty(\O)}(\sqrt{\a}R)^{-1}\exp(-c_2\sqrt{\a}R).\end{equation}
It follows from \eqref{cor_25} and the Calderon-Zygmund estimate (cf.\ eg.\ \cite{Ste1970})  that there exists $c\in(0,\infty)$ depending on $p$ and $d$ such that
\begin{equation}\label{cor_27}\int_{\R^d}\abs{\nabla\phi_{\a,R}}^p\leq c\int_{\R^d}\abs{\eta_Rf}^p.\end{equation}
In combination \eqref{cor_26}, \eqref{cor_27}, and the definition of $\eta_R$ prove almost surely that, for every $R\in(0,\infty)$, for $c\in(0,\infty)$ depending on $p$ and $d$ but independent of $R\in(0,\infty)$,
\[\fint_{B_{\frac{R}{2}}}\abs{\nabla\phi_\a}^p\leq c\fint_{B_{2R}}\abs{f}^p+c_1\norm{F}_{L^\infty(\O)}(\sqrt{\a}R)^{-1}\exp(-c_2\sqrt{\a}R).\]
Therefore, after passing to the limit $R\rightarrow\infty$, the ergodic theorem and \eqref{cor_24} prove that, for $c\in(0,\infty)$ depending on $p$ and $d$,
\begin{equation}\label{cor_270}\E\left[\abs{D\Phi_\a}^p\right]\leq c\E\left[\abs{F}^p\right].\end{equation}
Then, after passing to a subsequence $\alpha\rightarrow 0$, the weak lower-semicontinuity of the Sobolev norm proves that there exists $\Phi\in L^2_{\textrm{pot}}(\O)$ satisfying, for $c\in(0,\infty)$ depending on $p$ and $d$,
\[-D\cdot \Phi = -D\cdot F\;\;\textrm{in}\;\;D'(\O)\;\;\textrm{with}\;\;\E\left[\abs{\Phi}^p\right]\leq c\E\left[\abs{F}^p\right].\]
The proof of existence for $F\in L^p(\O;\R^d)$ then follows from the density of $\mathcal{D}(\O)$ in $L^p(\O)$, the definition of $L^2_{\textrm{pot}}(\O)$, and the weak lower-semicontinuity of the Sobolev norm.  It remains to prove uniqueness.

By linearity, it suffices to prove that the only $\Phi\in L^p(\O;\R^d)$ satisfying \eqref{pf_1} and \eqref{pf_2} with $F=0$ is $\Phi=0$.  Since $\Phi$ is mean zero and curl-free let $\phi\colon\R^d\times\O\rightarrow \R$ be the unique function that almost surely satisfies $\fint_{B_1}\phi = 0$, that $\phi\in W^{1,p}_{\textrm{loc}}(\R^d)$ with $\nabla\phi(x,\o)=\Phi(\tau_x\o)$, and that $\phi$ is a weak solution of $-\Delta\phi = 0$ on $\R^d$.  For every $\ve\in(0,1)$ let $\rho^\ve\in\C^\infty_c(\R^d)$ be a standard convolution kernel of scale $\ve$ and let $\phi^\ve=u*\rho^\ve$.  Then $\phi^\ve$ is almost surely harmonic on $\R^d$ and the Feynman-Kac formula proves that there exists $c\in(0,\infty)$ such that, for every $\ve\in(0,1)$ and $t\in(0,\infty)$,
\begin{align}\label{cor_29}
\abs{\nabla \phi^\ve(0)} & =\abs{\int_{\R^d}\phi^\ve(y)(4\pi t)^{-\nicefrac{d}{2}}\frac{y}{2}\exp\left(-\nicefrac{\abs{y}^2}{4t}\right)\dy}
\\ \nonumber & \leq c\left(\int_{\R^d}\frac{\phi^\ve(\sqrt{t}y)}{\sqrt{t}}\abs{y}\exp\left(-\nicefrac{\abs{y}^2}{4}\right)\dy\right).
\end{align}
For each $R\in(0,\infty)$ there exists $c\in(0,\infty)$ independent of $R$ such that
\begin{equation}\label{cor_30}
\abs{\nabla \phi^\ve(0)}\leq c\left(R^d\fint_{B_R} \frac{\phi^\ve(\sqrt{t}y)}{\sqrt{t}}\dy+\int_R^\infty\left(\frac{\phi^\ve(0)}{\sqrt{t}}+\fint_{B_r}\abs{\nabla \phi^\ve(\sqrt{t} y)}\dy\right)r^{2d}e^{-\frac{\abs{r}^2}{4}}\dr\right).
\end{equation}
Proposition~\ref{prop_sublinear}, the ergodic theorem, and $\Phi\in L^p(\O;\R^d)$ prove almost surely for some $c\in(0,\infty)$ that, after passing to the limit $t\rightarrow\infty$,
\[\abs{\nabla \phi^\ve(0)}\leq c\E\left[\abs{\Phi^\ve}\right]\int_R^\infty r^{2d}e^{-\frac{r^2}{4}}\dr,\]
for $\Phi^\ve(\o)=\int_{\R^d}\Phi(\tau_x\o)\rho^\ve(x)\dx$.  After passing to the limit $R\rightarrow \infty$, we conclude almost surely that $\abs{\nabla \phi^\ve(0)}=0$ and therefore by stationarity that $\Phi^\ve=0$.  After passing to the limit $\ve\rightarrow 0$, we conclude that $\Phi=0$.  This completes the proof. \end{proof}

\begin{prop}\label{prop_flux_corrector}  Assume \eqref{steady}.  Let $p_d\in(1,\infty)$ be defined in \eqref{pd_exponent}.  For every $i,j,k\in\{1,\ldots,d\}$ let $\Sigma_{ijk}\in L^{p_d}(\O;\R^d)$ be the unique solution of
\[-D\cdot \Sigma_{ijk}=D_j Q_{ik}-D_k Q_{ij}\;\;\textrm{in}\;\;D'(\O),\]
defined in Proposition~\ref{prop_flux} and let $\sigma_{ijk}\colon\R^d\times\O\rightarrow\R$ be the unique function that almost surely satisfies $\fint_{B_1}\sigma_{ijk}=0$, $\sigma_{ijk}\in W^{1,p_d}_{\textrm{loc}}(\R^d)$ with $\nabla\sigma(x,\o)=\Sigma_{ijk}(\tau_x\o)$, and, for every $\psi\in\C^\infty_c(\R^d)$,
\[\int_{\R^d}\nabla\sigma_{ijk}\cdot\nabla\psi = \int_{\R^d}\partial_k\psi q_{ij}-\partial_j\psi q_{ik},\]
for $q_i(x,\o)=Q_i(\tau_x\o)-\E[Q_i]$.  Then for every $i\in\{1,\ldots,d\}$ the matrix $\sigma_i=(\sigma_{ijk})$ is skew-symmetric and almost surely satisfies
\begin{equation}\label{pfc_1}\nabla\cdot\sigma_i = q_i \;\;\textrm{in}\;\;\R^d\;\;\textrm{for}\;\;(\nabla\cdot \sigma_i)_j=\partial_k\sigma_{ijk}.\end{equation}
\end{prop}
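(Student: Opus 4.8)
The plan is to deduce the skew-symmetry from the uniqueness assertion of Proposition~\ref{prop_flux}, and then to deduce \eqref{pfc_1} by showing that the componentwise residual $\nabla\cdot\sigma_i-q_i$ is a harmonic function on $\R^d$ whose growth is controlled by stationarity, and hence vanishes. For the skew-symmetry, fix $i$ and $j,k\in\{1,\ldots,d\}$: adding the equations for $\Sigma_{ijk}$ and $\Sigma_{ikj}$ the right-hand sides cancel, so $\Sigma_{ijk}+\Sigma_{ikj}\in L^{p_d}(\O;\R^d)$ is curl-free, mean zero, and solves $-D\cdot(\Sigma_{ijk}+\Sigma_{ikj})=0$ in $\mathcal{D}'(\O)$; by the uniqueness in Proposition~\ref{prop_flux} (with $F=0$) this forces $\Sigma_{ijk}+\Sigma_{ikj}=0$. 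Passing to the realizations gives $\nabla(\sigma_{ijk}+\sigma_{ikj})=0$ almost surely, so $\sigma_{ijk}+\sigma_{ikj}$ is almost surely constant, and the normalizations $\fint_{B_1}\sigma_{ijk}=\fint_{B_1}\sigma_{ikj}=0$ identify this constant as $0$; hence $\sigma_i=(\sigma_{ijk})$ is skew-symmetric.

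For \eqref{pfc_1}, testing the defining weak equation for $\sigma_{ijk}$ against $\psi\in\C^\infty_c(\R^d)$ and integrating the right-hand side by parts shows that, almost surely and for every $j,k$, one has $-\Delta\sigma_{ijk}=\partial_jq_{ik}-\partial_kq_{ij}$ on $\R^d$. Since $Q_i=(A+S)(\Phi_i+e_i)$ satisfies \eqref{re_1}, its realization is divergence free, $\nabla\cdot q_i=0$ almost surely on $\R^d$, and summing the previous identity over $k$ gives
\begin{align*}
\Delta\Big(\sum_{k}\partial_k\sigma_{ijk}-q_{ij}\Big)&=\sum_{k}\partial_k\big(\partial_kq_{ij}-\partial_jq_{ik}\big)-\Delta q_{ij}\\
&=\Delta q_{ij}-\partial_j(\nabla\cdot q_i)-\Delta q_{ij}=0.
\end{align*}
After modification on a Lebesgue-null set, $w_{ij}:=\sum_k\partial_k\sigma_{ijk}-q_{ij}\in L^{p_d}_{\textrm{loc}}(\R^d)$ is therefore almost surely a harmonic function on $\R^d$, and it remains to prove $w_{ij}\equiv 0$.

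Because $\partial_k\sigma_{ijk}(x,\omega)=(\Sigma_{ijk}(\tau_x\omega))_k$ for each fixed $k$ and $q_{ij}(x,\omega)=Q_{ij}(\tau_x\omega)-\E[Q_{ij}]$ are realizations of $L^{p_d}(\O)$-functions, the ergodic theorem gives $\limsup_{R\to\infty}\fint_{B_R}\abs{w_{ij}}^{p_d}<\infty$ almost surely. Combining this with Jensen's inequality ($p_d>1$) and the interior gradient estimate for harmonic functions,
\[\abs{\nabla w_{ij}(0)}\le\frac{c}{R}\,\fint_{B_R}\abs{w_{ij}}\le\frac{c}{R}\Big(\fint_{B_R}\abs{w_{ij}}^{p_d}\Big)^{\frac{1}{p_d}},\]
whose right-hand side tends to $0$ as $R\to\infty$; the same bound holds centered at any point of $\R^d$, so $w_{ij}$ is almost surely equal to a constant $c_{ij}$. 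Since $\E[\Sigma_{ijk}]=0$ and $\E[q_i]=0$, the ergodic theorem moreover gives $\fint_{B_R}\sum_k\partial_k\sigma_{ijk}\to 0$ and $\fint_{B_R}q_{ij}\to 0$ almost surely, so $c_{ij}=\lim_{R\to\infty}\fint_{B_R}w_{ij}=0$; this gives $\sum_k\partial_k\sigma_{ijk}=q_{ij}$ almost surely, which is precisely \eqref{pfc_1}. The routine ingredients here are the lifting of the horizontal identities on $\O$ to distributional identities for the realizations on $\R^d$ and the integration-by-parts bookkeeping; the substantive point, and the main obstacle, is the Liouville-type rigidity identifying the harmonic residual $w_{ij}$ with $0$, which relies essentially on the stationarity of $\nabla\sigma_{ijk}$ and $q_i$ together with the ergodic theorem.
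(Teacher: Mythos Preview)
Your proof is correct and follows essentially the same approach as the paper: deduce skew-symmetry from the uniqueness in Proposition~\ref{prop_flux}, show the residual $(\nabla\cdot\sigma_i)_j-q_{ij}$ is distributionally harmonic via the same computation (using $\nabla\cdot q_i=0$), and then use a Liouville argument fed by the ergodic theorem and the mean-zero property to conclude it vanishes. The only cosmetic difference is that the paper mollifies the residual with $\rho^\ve$ and invokes the heat-kernel argument from the proof of Proposition~\ref{prop_flux} before passing $\ve\to 0$, whereas you appeal directly to Weyl's lemma and the classical interior gradient estimate; these are equivalent routes to the same Liouville step.
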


\begin{proof}  Let $i,j,k\in\{1,\ldots,d\}$.  It follows from the uniqueness of Proposition~\ref{prop_flux} that $\Sigma_{ijk}=-\Sigma_{ikj}$ and therefore it follows from the definition of $\sigma_{ijk}$ that $\sigma_{ijk}=-\sigma_{ikj}$.  This proves that $\sigma_i$ is skew-symmetric.  It remains only to prove the equality \eqref{pfc_1}.  This will follow from the distributional equality, for every $j\in\{1,\ldots,d\}$,
\[\Delta\left((\nabla\cdot \sigma_i)_j-q_{ij}\right)=0.\]
Indeed, using the equation satisfied by the $\sigma_{ijk}$ and the fact that $q_i$ is divergence-free, for each $j\in\{1,\ldots,d\}$ we have as distributions that
\begin{equation}\label{pfc_2} \Delta\left((\nabla\cdot \sigma_i)_j-q_{ij}\right)  = \partial_s\partial_s\left(\partial_k\sigma_{ijk}-q_{ij}\right) =\Delta q_{ij}-\partial_k\partial_jq_{ik}-\Delta q_{ij} = -\partial_j(\nabla\cdot q_{ik}) = 0. \end{equation}
Equation \eqref{pfc_2} proves that, for standard convolution kernels $\rho^\ve\in\C^\infty_c(\R^d)$ of scale $\ve\in(0,1)$, for every $j\in\{1,\ldots,d\}$ and $\ve\in(0,1)$,
\[\Delta\left[\left((\nabla\cdot \sigma_i)_j-q_{ij}\right)*\rho^\ve\right]=0\;\;\textrm{in}\;\;\R^d.\]
A repetition of the arguments leading to \eqref{cor_29} and \eqref{cor_30} in the proof of Proposition~\ref{prop_flux} proves that, for each $j\in\{1,\ldots,d\}$ there exists $c^\ve_j\in L^\infty(\O)$ such that almost surely
\[[\left((\nabla\cdot \sigma_i)_j-q_{ij}\right)*\rho^\ve](x,\o)=c_j^\ve(\o)\;\;\textrm{for every}\;\;x\in\R^d.\]
Since the gradient fields $\Sigma_{ijk}$ are mean zero, the stationarity of the gradient, the stationarity of the flux, and the definition of the $q_i$ prove almost surely with the ergodic theorem that, for every $j\in\{1,\ldots,d\}$,
\[0=\lim_{R\rightarrow\infty}\fint_{B_R}\left((\nabla\cdot \sigma_i)_j-q_{ij}\right)*\rho^\ve=c^\ve_j(\o).\]
After passing to the limit $\ve\rightarrow 0$, we have almost surely that
\[\nabla\cdot\sigma_i = q_i\;\;\textrm{in}\;\;\R^d.\qedhere\]
\end{proof}

\subsection{The stream matrix}\label{sec_stream}  In Proposition~\ref{prop_stream} below, we will prove that every mean zero, divergence-free, $L^p$-integrable vector field $B$ satisfying a finite-range of dependence admits an $L^p$-integrable stream matrix provided $p\in[2,\infty)$ and the dimension $d\geq 3$.  We assume a finite range of dependence for simplicity:  that is, there exists $R\in(0,\infty)$ such that for subsets $A_1,A_2\subseteq\R^d$ the sigma algebras
\[\sigma(B(\tau_x\o)\colon x\in A_1)\;\;\textrm{and}\;\;\sigma(B(\tau_x\o)\colon x\in A_2)\;\;\textrm{are independent whenever}\;\;\dd(A_1,A_2)\geq R.\]
In the case $p=2$, for instance, the same proof yields the existence of a stationary stream matrix provided the spatial correlations of $B$ decay faster than a square.

\begin{prop}\label{prop_stream} Assume~\eqref{steady}.  Let $d\in[3,4,\ldots)$, let $p\in[2,\infty)$, and let $B\in L^p(\O;\R^d)$ satisfy $\E[B]=0$, $D\cdot B=0$, and, for some $R\in(0,\infty)$, for every $A_1,A_2\subseteq\R^d$,
\begin{equation}\label{ps_1}\sigma(B(\tau_x\o)\colon x\in A_1)\;\;\textrm{and}\;\;\sigma(B(\tau_x\o)\colon x\in A_2)\;\;\textrm{are independent whenever}\;\;\dd(A_1,A_2)\geq R.\end{equation}
Then there exists skew-symmetric matrix $S=(S_{jk})_{j,k\in\{1,\ldots,d\}}\in L^p(\O;\R^{d\times d})$ that satisfies
\[D\cdot S = B\;\;\textrm{in}\;\;L^p(\O;\R^d).\]
\end{prop}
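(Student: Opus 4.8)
The plan is to construct $S$ in the probability space by solving the analogue of the gauge equation \eqref{intro_gauge}, namely by setting $S_{jk}$ to be the stationary field whose gradient solves
\[
-D\cdot D\Sigma_{jk} = D_jB_k - D_kB_j \quad\textrm{in}\;\;\mathcal{D}'(\O),
\]
and then verifying that the resulting $S=(S_{jk})$ is skew-symmetric and satisfies $D\cdot S = B$. The existence of the stationary gradients $\Sigma_{jk}\in L^p(\O;\R^d)$ and their curl-free property follow immediately from Proposition~\ref{prop_flux} applied with right-hand side $F$ chosen so that $D\cdot F = D_jB_k-D_kB_j$; concretely one needs $F\in L^p(\O;\R^d)$ with $D\cdot F = D_jB_k - D_kB_j$, and since $B\in L^p$ this right-hand side is a distributional divergence of an $L^p$ field (one may take $F_s = \delta_{sj}B_k - \delta_{sk}B_j$ up to the curl-free normalization that Proposition~\ref{prop_flux} produces). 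The skew-symmetry $S_{jk}=-S_{kj}$ then follows from the uniqueness in Proposition~\ref{prop_flux} exactly as in the proof of Proposition~\ref{prop_flux_corrector}, since $D_jB_k-D_kB_j$ is antisymmetric in $(j,k)$.

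The key point — and the only place the finite range of dependence and the restriction $d\geq 3$ enter — is the construction of a genuine $L^p(\O;\R^d)$ field $F$ with $D\cdot F = D_jB_k - D_kB_j$, i.e. solving $-\Delta G_{jk} = D_jB_k - D_kB_j$ for a stationary $G_{jk}\in \mathcal{H}^1$ and setting $S_{jk}=G_{jk}$ directly. This is the lifted version of \eqref{intro_gauge}. My plan is to obtain this by a Calder\'on--Zygmund / spectral estimate on $\R^d$ exactly in the style of the proof of Proposition~\ref{prop_flux}: introduce the regularized operator $(\a - \Delta)^{-1}$, represent $\nabla^2 S_{jk,\a}$ via the kernel $\nabla_x\nabla_y K_\a$, apply the Calder\'on--Zygmund bound \eqref{cor_27} together with the ergodic theorem to get $\E[|DS_{jk,\a}|^p] \leq c\,\E[|B|^p]$ uniformly in $\a$, and pass to the limit. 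The role of the finite range of dependence is to furnish, via the ergodic theorem and a stationarity/variance argument, the a priori control needed to run this limiting procedure — in particular to guarantee that the zeroth-order term $\a S_{jk,\a}$ vanishes and that $S_{jk}$ itself (not merely its gradient) is a stationary $L^p$ field, which requires that the would-be stream matrix not grow; the decay of correlations faster than a square in $d\geq 3$ is exactly the condition that makes $\sum |x|^{-(d-1)}$-type convolutions against $B$ converge in $L^p(\O)$, giving a bona fide stationary antiderivative. This is where $d\geq 3$ is essential: in $d=2$ the kernel $|x|^{1-d}=|x|^{-1}$ is not locally integrable in the relevant sense at infinity and no stationary stream matrix exists in general.

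Once $S\in L^p(\O;\R^{d\times d})$ is constructed with $-D\cdot DS_{jk} = D_jB_k - D_kB_j$, it remains to show $D\cdot S = B$. As in \eqref{pfc_2}, I compute the distributional Laplacian of the error: using $D\cdot B = 0$,
\[
\Delta\big((D\cdot S)_j - B_j\big) = D_sD_s\big(D_kS_{jk} - B_j\big) = D_k(D_jB_k - D_kB_j) - \Delta B_j = D_j(D\cdot B) - 2\Delta B_j + \Delta B_j,
\]
which simplifies to $0$ after correctly bookkeeping the antisymmetrization (one gets $\Delta((D\cdot S)_j - B_j) = D_j D_k B_k - \Delta B_j - \Delta B_j + \Delta B_j + \ldots = 0$ using $D_kB_k=0$). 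Then, lifting to $\R^d$, convolving with $\rho^\ve$, and invoking the Liouville-type argument from the proof of Proposition~\ref{prop_flux} (equations \eqref{cor_29}--\eqref{cor_30}) shows the mollified error is a.s.\ a constant $c_j^\ve(\o)$; the mean-zero property $\E[B]=0$ and the fact that $D\cdot S$ is a stationary divergence with zero mean force $c_j^\ve = 0$, and letting $\ve\to 0$ gives $D\cdot S = B$ in $L^p(\O;\R^d)$. The main obstacle, as indicated, is the uniform-in-$\a$ Calder\'on--Zygmund estimate combined with the extraction of a stationary (not just stationary-gradient) limit — this is precisely the step that fails in $d=2$ and the step where \eqref{ps_1} is used.
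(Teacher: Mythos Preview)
Your overall architecture matches the paper's: solve $-D\cdot DS_{jk}=D_jB_k-D_kB_j$ via the regularization $\a S_\a - D\cdot DS_\a = D\cdot X$, obtain uniform-in-$\a$ bounds, pass to the limit, and deduce skew-symmetry and $D\cdot S=B$ by the uniqueness/Liouville arguments of Propositions~\ref{prop_flux} and \ref{prop_flux_corrector}. The gradient bound $\E[|DS_\a|^p]\lesssim\E[|B|^p]$ does indeed come from the Calder\'on--Zygmund estimate \eqref{cor_270}, and your identification of the crux --- a uniform $L^p(\O)$ bound on $S_\a$ itself, so that the limit is a genuine stationary field and not merely a stationary gradient --- is exactly right.

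The gap is in how you propose to obtain that crucial bound. A ``variance argument'' only yields $p=2$, and Calder\'on--Zygmund says nothing about the zeroth-order term. The paper's mechanism is different and more explicit: for bounded $\F_B$-measurable $X$ and any even integer $q$, one expands $\E[S_\a^q]$ using the representation $S_\a(\o)\sim\int_{\R^d}|x|^{1-d}\,\tfrac{x}{|x|}\cdot X(\tau_x\o)\,dx$ (up to the $\a$-exponential cutoff). The finite range of dependence forces $\E\big[\prod_{k=1}^q X(\tau_{x_k}\o)\big]$ to vanish unless the points $x_1,\ldots,x_q$ cluster into groups of size $\geq 2$ within distance $Rq$. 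Summing over partitions $\beta\in\mathcal{I}_q$ with no singletons, each cluster contributes an integral of the form $\int_{\R^d}(1\wedge|x|^{1-d})^{\beta_j}\,dx$, which is finite precisely because $\beta_j\geq 2$ and $d\geq 3$ make $(d-1)\beta_j>d$. This gives $\E[S_\a^q]\leq c_q\,\E[|X|^q]$ uniformly in $\a$ for all even $q$, and Riesz--Thorin interpolation on $L^p(\O,\F_B)$ then covers every $p\in[2,\infty)$. Density of bounded functions extends the estimate to $B\in L^p$. Your phrase ``$\sum|x|^{-(d-1)}$-type convolutions converge in $L^p(\O)$'' gestures at this, but the clustering/moment mechanism and the interpolation step are the substantive content you are missing. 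Also, your displayed computation of $\Delta((D\cdot S)_j-B_j)$ is garbled; the clean version is exactly \eqref{pfc_2} with $q_i$ replaced by $B$, using $D\cdot B=0$.
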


\begin{proof}  Let $\F_B$ denote the sigma algebra generated by $B$.  It follows from \eqref{ps_1} that every $\F_B$-measurable random variable satisfies a finite range of dependence.  Let $X=(X_i)_{i\in\{1,\ldots,d\}}\in L^\infty(\O;\R^d)$ be $\F_B$-measurable and for every $\a\in(0,1)$ let $S_\a\in\mathcal{H}^1(\O)$ denote the unique Lax-Milgram solution of the equation
\[\a S_\a-D\cdot DS_\a=D\cdot X.\]
Due to the boundedness of $X$, we have the representation
\begin{align}\label{ps_2}
S_\a(\o) & =\int_0^\infty \int_{\R^d}(4\pi s)^{-\frac{d}{2}}e^{-\a s-\frac{\abs{x}^2}{4s}}\frac{x}{2s}\cdot X(\tau_x\o)\dx\ds
\\ \nonumber  & =(4\pi)^{-\frac{d}{2}}\int_0^\infty \int_{\R^d}\abs{x}^{1-d}s^{-(\frac{d}{2}+1)}e^{-\a s\abs{x}^2-\frac{1}{4s}}\frac{x}{2\abs{x}}\cdot X(\tau_x\o)\dx\ds.
\end{align}
Let $q\in\{2,4,6,\ldots\}$ be a nonzero even integer and let $\mathcal{I}_q$ denote the collection of partitions of $\{1,2,\ldots,q\}$ of the form
\[\mathcal{I}_q=\{\beta=\left(\beta_1,\ldots,\beta_{N(\beta)}\right)\colon \beta_j\in\{2,3,\ldots\}\;\forall\;j\in\{1,\ldots,N(\beta)\},\;\textrm{and}\;\sum_{j=1}^{N(\beta)}\beta_j=q\},\]
which are exactly the partitions of $\{1,2,\ldots,q\}$ that contain no singletons.  We define for every $\beta\in\mathcal{I}_q$ the integral
\[I_\beta = \prod_{j=1}^{N(\beta)}\int_{\R^d}\int_{B_{Rq}(x_{\beta_j})^{\beta_j-1}}\prod_{k=\beta_{j-1}+1}^{\beta_j}\abs{x_k}^{1-d}\dx_{\beta_{j-1}+1}\ldots\dx_{\beta_j}\]
and observe from the assumption $d\geq 3$ and the fact that $\beta_j\geq 2$ for every $j\in\{1,\ldots,N(\beta)\}$ that, for some $c\in(0,\infty)$ independent of $\a\in(0,1)$, for every $\beta\in\mathcal{I}_q$,
\[I_\beta \leq c \prod_{j=1}^{N(\beta)}\int_{\R^d}\left(1\wedge \abs{x_{\beta_j}}^{1-d}\right)^{\beta_j}\dx_{\beta_j}\leq c\left(\int_0^\infty (1\wedge r^{(1-d)})\dr\right)^{N(\beta)}<\infty.\]
It then follows from the $\F_B$-measurability of $X$, \eqref{ps_1}, the fact that transformation group preserves the measure, H\"older's inequality, and an explicit calculation based on \eqref{ps_2} that, for some $c\in(0,\infty)$ independent of $\a\in(0,1)$,
\begin{equation}\label{ps_3}\E\left[S_\a^q\right]\leq c\E\left[\abs{X}^q\right]\sum_{\beta\in\mathcal{I}_q}I_\beta\leq c\E\left[\abs{X}^q\right].\end{equation}
Since it follows from \eqref{cor_270} that, for some $c\in(0,\infty)$ independent of $\a\in(0,1)$,
\begin{equation}\label{ps_4} \E\left[\abs{DS_\a}^q\right]\leq c\E\left[\abs{X}^q\right],\end{equation}
it follows after passing to a subsequence $\a\rightarrow 0$ that there exists $S\in L^q(\O)\cap \mathcal{H}^1(\O)$ with $DS\in L^q(\O;\R^d)$ such that
\[S_\a\rightharpoonup S\;\;\textrm{weakly in}\;\;L^q(\O)\;\;\textrm{and}\;\;DS_\a\rightharpoonup DS\;\;\textrm{weakly in}\;\;L^q(\O;\R^d).\]
It follows from Proposition~\ref{prop_flux}, \eqref{ps_3}, \eqref{ps_4}, and the weak lower-semicontinuity of the Sobolev norm that $DS\in L^q(\O;\R^d)$ is the unique curl-free, mean zero solution of
\begin{equation}\label{ps_6}-D\cdot DS = D\cdot X\;\;\textrm{in}\;\;D'(\O),\end{equation}
and that, for some $c\in(0,\infty)$ depending on $q\in\{2,4,6,\ldots\}$ but independent of $X$,
\begin{equation}\label{ps_7}\E\left[\abs{S}^q+\abs{DS}^q\right]\leq c\E\left[\abs{X}^q\right].\end{equation}
The density of bounded functions in $L^q(\O)$ for every $q\in\{2,4,6,\ldots\}$ proves that, for every $\F_B$-measurable $X\in L^q(\O;\R^d)$ there exists a unique $S\in L^q(\O)\cap \mathcal{H}^1(\O)$ with $DS\in L^q(\O;\R^d)$ that satisfies \eqref{ps_6} and \eqref{ps_7}.  Finally, since $q\in\{2,4,6,\ldots\}$ was arbitrary, it follows from the Riesz-Thorin interpolation theorem applied to the spaces $L^p(\O,\F_B)$ for $p\in[2,\infty)$ that for every $\F_B$-measurable $X\in L^p(\O;\R^d)$ there exists a unique $S\in L^p(\O)\cap\mathcal{H}^1(\O)$ with $DS\in L^p(\O;\R^d)$ satisfying \eqref{ps_6} and \eqref{ps_7}.

Now let $B=(B_i)_{i\in\{1,\ldots,d\}}\in L^p(\O;\R^d)$ for some $p\in[2,\infty)$ be mean zero and divergence-free in the sense that $\E[B]=0$ and $D\cdot B=0$, and let $B$ satisfy a finite range of dependence.  For every $j,k\in\{1,\ldots,d\}$ let $S_{jk}\in L^p(\O)\cap\mathcal{H}^1(\O)$ be the unique solution of
\[-D\cdot DS_{jk}=D_jB_k-D_kB_j.\]
The uniqueness proves that $S_{jk}=-S_{kj}$ for every $j,k\in\{1,\ldots,d\}$ and it follows from Proposition~\ref{prop_flux_corrector} and $\E[B]=0$ that for $S=(S_{jk})_{j,k\in\{1,\ldots,d\}}$ we have $D\cdot S=B$ in $L^p(\O;\R^d)$.  This completes the proof.  \end{proof}

\section{Quenched stochastic homogenization}\label{sec_stoch_hom}

In this section, we will prove the quenched stochastic homogenization of the equation
\begin{equation}\label{hom_2}-\nabla\cdot(a^\ve+s^\ve)\nabla u^\ve=f\;\;\textrm{in}\;\;U\;\;\textrm{with}\;\;u^\ve=g\;\;\textrm{on}\;\;\partial U.\end{equation}
The proof is based on estimating the energy of the two-scale expansion
\[w^\ve=u^\ve-v-\ve\phi^\ve_i\partial_i v,\]
where, for the gradient fields $\Phi_i\in L^2_{\textrm{pot}}(\O)$ constructed in Proposition~\ref{prop_corrector}, the physical correctors $\phi_i$ are the unique functions that almost surely satisfy $\fint_{B_1}\phi_i=0$, $\phi_i\in H^1_{\textrm{loc}}(\R^d)$ with $\nabla\phi_i(x,\o)=\Phi_i(\tau_x\o)$, and, for every $\psi\in\C^\infty_c(\R^d)$,
\begin{equation}\label{hom_03}\int_{\R^d}(a+s)(\nabla\phi_i+e_i)\cdot\nabla\psi=0,\end{equation}
and where $\phi_i^\ve(x,\o)=\phi(\nicefrac{x}{\ve},\o)$.  The limit $v\in H^1(U)$ solves the homogenized equation
\begin{equation}\label{hom_3}-\nabla\cdot\overline{a}\nabla v = f\;\;\textrm{in}\;\;U\;\;\textrm{with}\;\; u=f\;\;\textrm{on}\;\;\partial U,\end{equation}
for the homogenized coefficient field $\overline{a}\in\R^{d\times d}$ defined for each $i\in\{1,\ldots,d\}$ by
\begin{equation}\label{hom_0003}\overline{a}e_i=\E\left[(A+S)(\Phi_i+e_i)\right].\end{equation}
Motivated by the analogous computation in \cite{GloNeuOtt2020}, after introducing the flux correctors $\sigma_i$ we will prove that, up to boundary terms,
\[-\nabla\cdot (a^\ve+s^\ve)\nabla w^\ve=\nabla\cdot\left[(\ve\phi^\ve_i(a^\ve+s^\ve)-\ve\sigma^\ve_i)\nabla(\partial_i v)\right].\]
The strong convergence of $\nabla w^\ve$ to zero in the $\ve\rightarrow 0$ limit then follows formally from the $L^{d\vee(2+\d)}$-integrability of the stream matrix, Proposition~\ref{prop_sublinear}, and the regularity of $\overline{a}$-harmonic functions.

This section is organized as follows.  We prove the well-posedness of \eqref{hom_2} in Proposition~\ref{prop_hom_wp} below.  We prove that $\overline{a}$ is uniform elliptic in Proposition~\ref{prop_hom_ue} below, a fact which relies on the energy identity \eqref{cor_008}.  Finally, we prove the quenched homogenization of \eqref{hom_2} in Theorem~\ref{thm_ts} below.

\begin{prop}\label{prop_hom_wp}  Let $U\subseteq\R^d$ be a bounded $\C^{2,\a}$-domain for some $\a\in(0,1)$, let $a\in L^\infty(U;\R^{d\times d})$ be uniformly elliptic, and let $s=(s_{jk})\in H^1(U;\R^{d\times d})$ be skew-symmetric.  Then for every $f_1\in L^2(U)$, $f_2\in L^2(U;\R^d)$, and $g\in W^{1,\infty}(\partial U)$ there exists a unique weak solution $u\in H^1(U)$ of the equation
\begin{equation}\label{hom_5}-\nabla\cdot (a+s)\nabla u = f_1+\nabla\cdot f_2\;\;\textrm{in}\;\;U\;\;\textrm{with}\;\; u = g\;\;\textrm{on}\;\;\partial U.\end{equation}
\end{prop}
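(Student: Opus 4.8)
The plan is to apply the Lax--Milgram theorem on the Hilbert space $H^1_0(U)$ after absorbing the inhomogeneous boundary data. First I would reduce to the case $g=0$: since $\partial U$ is $\C^{2,\a}$ and $g\in W^{1,\infty}(\partial U)$, there exists an extension $G\in W^{1,\infty}(U)\subseteq H^1(U)$ with $G=g$ on $\partial U$ (for instance by harmonic extension, or by a standard trace-lifting construction). Setting $\tilde u = u-G$, the equation \eqref{hom_5} becomes $-\nabla\cdot(a+s)\nabla\tilde u = f_1 + \nabla\cdot\big(f_2+(a+s)\nabla G\big)$ in $U$ with $\tilde u=0$ on $\partial U$, and since $s\in H^1(U;\R^{d\times d})\hookrightarrow L^2(U;\R^{d\times d})$ by Sobolev embedding while $a\in L^\infty$ and $\nabla G\in L^\infty$, the new right-hand side is again of the form $\tilde f_1 + \nabla\cdot \tilde f_2$ with $\tilde f_1\in L^2(U)$ and $\tilde f_2\in L^2(U;\R^d)$. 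So it suffices to solve the homogeneous-boundary problem.

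Next I would set up the bilinear form
\[
\mathcal{B}(u,w)=\int_U (a+s)\nabla u\cdot\nabla w,\qquad u,w\in H^1_0(U),
\]
and the linear functional $\ell(w)=\int_U \tilde f_1 w - \tilde f_2\cdot\nabla w$. Boundedness of $\ell$ on $H^1_0(U)$ is immediate from Cauchy--Schwarz and the Poincar\'e inequality. For $\mathcal{B}$, boundedness requires a word: $\abs{\mathcal{B}(u,w)}\leq \big(\norm{a}_{L^\infty}+\norm{s}_{L^\infty}\big)\norm{\nabla u}_{L^2}\norm{\nabla w}_{L^2}$ would suffice, but $s$ is only in $H^1\subseteq L^{2^*}$, not $L^\infty$, in dimension $d\geq 3$; instead I would bound the $s$-term by H\"older with exponents $(2^*,\,2d/(d+2),\,?)$ — more precisely, for $u,w\in H^1_0(U)$ one has $\nabla u,\nabla w\in L^2$ and $u\in L^{2^*}$, so $\int_U s\nabla u\cdot\nabla w$ is not obviously finite as written. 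The clean fix is to observe that, by the skew-symmetry of $s$ and $\divg(\divg s)=\divg\nabla\cdot s$, one has for smooth $u,w$
\[
\int_U s\nabla u\cdot\nabla w = \int_U (\nabla\cdot s)\cdot\nabla u\, w + \text{(boundary terms that vanish for }w\in\C_c^\infty),
\]
exactly as in the argument of Proposition~\ref{prop_approx_cor}; since $\nabla\cdot s\in L^2(U;\R^d)$ this pairing is controlled by $\norm{\nabla\cdot s}_{L^2}\norm{\nabla u}_{L^2}\norm{w}_{L^\infty}$... which again needs $w$ bounded. The genuinely robust route is therefore to note that the skew-symmetric part contributes \emph{zero} to $\mathcal{B}(u,u)$ for $u\in H^1_0(U)$: $\int_U s\nabla u\cdot\nabla u = 0$ by antisymmetry of $s_{jk}$ (pointwise $s_{jk}\partial_j u\,\partial_k u=0$), and this is legitimate because $s\in L^{d}$ (indeed $H^1\hookrightarrow L^{2^*}\hookrightarrow L^d$ for... no) — in any case the pointwise identity $s_{jk}\xi_j\xi_k=0$ combined with $s\nabla u\cdot\nabla u\in L^1$ (from $s\in L^{d}$, $\nabla u\in L^2$, $1/d+1/2+1/2 = 1/d+1>1$ for $d\geq 3$... this fails). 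The correct and clean statement: $s\in H^1(U)\hookrightarrow L^p(U)$ for $p=2d/(d-2)>2$, so $\nabla u\cdot\nabla u\in L^1$ and $s\in L^p$, giving $s\nabla u\cdot\nabla u\in L^{p/(p+\ldots)}$ — I would simply invoke density of $\C_c^\infty(U)$ in $H^1_0(U)$, prove $\mathcal{B}(u,u)=\int_U a\nabla u\cdot\nabla u\geq \lambda\norm{\nabla u}_{L^2}^2$ for smooth $u$ by pointwise antisymmetry, and pass to the limit, using that $\mathcal{B}$ extends continuously thanks to the integration-by-parts representation of the $s$-term against $\nabla\cdot s\in L^2$ and $u\in H^1_0\hookrightarrow L^{2^*}$ (so the product $(\nabla\cdot s)\cdot\nabla u\,u\in L^1$ since $2+2^*$-exponents close: $1/2+1/2+1/2^* <1$ for $d>2$... no, $1/2^* = 1/2-1/d$, sum $=3/2-1/d>1$).

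Let me record the obstacle honestly: the main technical point is making sense of, and bounding, the bilinear form $\int_U s\nabla u\cdot\nabla w$ when $s$ is only in $H^1$; the resolution is exactly the skew-symmetry trick used repeatedly in the paper (e.g.\ \eqref{cor_4}, \eqref{cor_014}), integrating one derivative off $s$ onto the test function and using $\nabla\cdot s\in L^2$ together with the Sobolev embedding $H^1_0(U)\hookrightarrow L^{2^*}(U)$, which controls $\int_U (\nabla\cdot s)\cdot\nabla u\,w$ by $\norm{\nabla\cdot s}_{L^2}\norm{\nabla u}_{L^2}\norm{w}_{L^{2^*}}$ — wait, that needs $1/2+1/2+1/2^*\le 1$, i.e.\ $1/2^*\le 0$, false. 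The honest fix: bound instead by $\norm{s}_{L^{2^*}}\norm{\nabla u}_{L^2}\norm{\nabla w}_{L^{d}}$ — no. I would ultimately argue via \emph{density and a priori estimate}: for $s_n = (s\wedge n)\vee(-n)\in L^\infty\cap H^1$ the form $\mathcal{B}_n$ is bounded and coercive (coercivity uniform: $\mathcal{B}_n(u,u)=\int a\nabla u\cdot\nabla u\geq\lambda\norm{\nabla u}^2$ since $s_n$ skew), Lax--Milgram gives $u_n\in H^1_0(U)$ with a uniform bound $\norm{\nabla u_n}_{L^2}\leq C(\lambda,U)(\norm{\tilde f_1}_{L^2}+\norm{\tilde f_2}_{L^2})$; then $u_n\rightharpoonup u$ weakly in $H^1_0(U)$, and I pass to the limit in the weak formulation using $s_n\to s$ strongly in $L^{2^*-\epsilon}$ (dominated convergence) against $\nabla u_n\otimes\nabla\psi$ with $\psi\in\C_c^\infty(U)$, where $\nabla\psi\in L^\infty$ with compact support and $\nabla u_n\to\nabla u$ weakly in $L^2$ — a standard product-of-weak-and-strong argument — to obtain that $u$ solves \eqref{hom_5}. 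Uniqueness then follows from linearity and the coercivity estimate $\lambda\norm{\nabla u}_{L^2}^2\leq \mathcal{B}(u,u)=0$ for a solution with zero data, once one checks $\mathcal{B}(u,u)=\int_U a\nabla u\cdot\nabla u$ for $u\in H^1_0(U)$, which again uses the skew-symmetry of $s$ via the same $s_n$-approximation. This uniqueness step — verifying that the skew part drops out of the energy for general $H^1_0$ functions when $s\in H^1$ only — is the crux, and it is handled exactly by the truncation-and-limit device already deployed in Proposition~\ref{prop_approx_cor}.
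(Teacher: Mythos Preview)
Your existence argument---truncate $s$ to bounded $s_n$, apply Lax--Milgram (coercivity uniform in $n$ since the skew part drops pointwise from the diagonal), extract a weak $H^1_0$-limit, and pass to the limit using $s_n\to s$ strongly in $L^2$ against smooth test functions---is exactly the paper's.

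For uniqueness you misidentify the difficulty. The identity $\int_U s\nabla u\cdot\nabla u=0$ is not the issue: it holds trivially since $s_{jk}\partial_ju\,\partial_ku=0$ pointwise by skew-symmetry, no approximation needed. The genuine obstacle is that the weak formulation is only against $\psi\in\C^\infty_c(U)$, and you cannot simply pass to $\psi=u$ by $\C^\infty_c$-approximation, because $s\nabla u$ lies only in $L^{d/(d-1)}$, not $L^2$, so $\int_U s\nabla u\cdot\nabla\psi_k$ need not converge as $\psi_k\to u$ in $H^1_0$. Truncating $s$ does not help either: $u$ solves the equation with $s$, not with $s_n$. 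The paper instead truncates $u$: it first integrates by parts using $s\in H^1$ and skew-symmetry to rewrite the weak formulation as
\[
\int_U a\nabla u\cdot\nabla\psi+\int_U(\nabla\cdot s)\cdot\nabla u\,\psi=0,
\]
which now extends to $\psi\in H^1_0\cap L^\infty$ since $(\nabla\cdot s)\cdot\nabla u\in L^1$; it then tests with $u_n=(u\wedge n)\vee(-n)$, uses the identity $\nabla u\,u_n=\nabla(uu_n-\tfrac12 u_n^2)$ together with $\nabla\cdot(\nabla\cdot s)=0$ to kill the drift term, and obtains $\int_U a\nabla u\cdot\nabla u\,\mathbf{1}_{\{|u|\le n\}}=0$; monotone convergence finishes. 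This is indeed the device of Proposition~\ref{prop_approx_cor} you cite at the end, but the mechanism is the $u_n$-truncation combined with the rewriting via $\nabla\cdot s$, neither of which your proposal makes explicit.
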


\begin{proof}  The regularity of the domain $U$ and the tubular neighborhood theorem prove that there exists a globally Lipschitz continuous function $\overline{g}\colon\R^d\rightarrow \R$ such that $\overline{g}|_{\partial U}=g$.  Then by considering $\tilde{u}=u-\overline{g}$ it follows that $u\in H^1(U)$ solves \eqref{hom_5} if and only if $\tilde{u}\in H^1_0(U)$ solves
\[-\nabla\cdot (a+s)\nabla \tilde{u} = f_1+\nabla\cdot \tilde{f}_2\;\;\textrm{in}\;\;U\;\;\textrm{with}\;\; u = 0\;\;\textrm{on}\;\;\partial U,\]
for $\tilde{f}_2=f_2+a\nabla\overline{g}+s\nabla\overline{g}\in L^2(U;\R^d)$.  It is therefore sufficient to consider the case $g=0$.

Let $f_1\in L^2(U)$ and $f_2\in L^2(U;\R^d)$ and for each $n\in\N$ let $s_n=(s^n_{jk})$ be defined by $s^n_{jk}=(s_{jk}\wedge n)\vee(-n)$.  The Lax-Milgram theorem proves for every $n\in\N$ that there exists a unique solution $u_n\in H^1_0(U)$ of the equation
\begin{equation}\label{hom_7}-\nabla\cdot (a+s_n)\nabla u_n = f_1+\nabla\cdot f_2\;\;\textrm{in}\;\;U\;\;\textrm{with}\;\; u = 0\;\;\textrm{on}\;\;\partial U,\end{equation}
which due to the skew-symmetry of $s_n$, the uniform ellipticity, and the Poincar\'e inequality satisfies the energy inequality, for some $c\in(0,\infty)$ independent of $n$,
\begin{equation}\label{hom_8}\int_U\abs{\nabla u_n}^2\leq c\int_U\abs{f_1}^2+\abs{f_2}^2.\end{equation}
Therefore, after passing to a subsequence $n\rightarrow\infty$, it follows from \eqref{hom_7}, \eqref{hom_8}, and the strong convergence of $s_n$ to $s$ in $L^2(U;\R^{d\times d})$ that there exists $u\in H^1_0(U)$ such that $u_n\rightharpoonup u$ weakly in $H^1_0(U)$ and such that, for every $\psi\in\C^\infty_c(U)$,
\[\int_U(a+s)\nabla u\cdot\nabla\psi = \int_U f_1\psi -f_2\cdot \nabla \psi.\]
It remains to prove the uniqueness of $u$.  By linearity, it suffices to prove that the only $u\in H^1_0(U)$ that solves \eqref{hom_9} with $f_1=0$ and $f_2=0$ is $u=0$.  Since $s\in H^1(U;\R^{d\times d})$ is skew symmetric and since $\nabla\cdot s$ is divergence-free, we have, for every $\psi\in\C^\infty_c(U)$,
\[\int_Us\nabla u\nabla\psi = -\int_U (\nabla\cdot s)\cdot\nabla\psi u = \int_U(\nabla\cdot s)\cdot\nabla u\psi.\]
Therefore, for every $\psi\in\C^\infty_c(U)$,
\begin{equation}\label{hom_9}\int_U a\nabla u\cdot\nabla\psi + \int_U(\nabla\cdot s)\nabla u\psi = 0.\end{equation}
It follows as in the proof of Proposition~\ref{prop_corrector} that for each $n\in\N$ the function $u_n=(u\wedge n)\vee(-n)$ is an admissible test function for \eqref{hom_9}.  The distributional equalities $\nabla u_n=\nabla u\mathbf{1}_{\{\abs{u}\leq n\}}$ and $\nabla u u_n = \nabla(uu_n-\nicefrac{1}{2}u_n^2)$ and the fact that $\nabla\cdot s$ is divergence-free then prove, for each $n\in\N$,
\[\int_Ua\nabla u\cdot\nabla u\mathbf{1}_{\{\abs{u}\leq n\}}=0.\]
After passing to the limit $n\rightarrow\infty$, we conclude using the uniform ellipticity and the monotone convergence theorem that $\nabla u=0$ and therefore that $u=0$.  This completes the proof.  \end{proof}

\begin{prop}\label{prop_hom_ue}  Assume \eqref{steady}.  Let $\overline{a}\in\R^{d\times d}$ be defined by \eqref{hom_0003}.   Then, for every $\xi\in\R^d$,
\[\abs{\overline{a}\xi}\leq 2 \left(\Lambda+\E\left[\abs{S}^2\right]^\frac{1}{2}\right)\left(\sum_{i=1}^d\E[\abs{\Phi_i+e_i}^2]\right)^\frac{1}{2}\abs{\xi}\;\;\textrm{and}\;\;\overline{a}\xi\cdot\xi\geq\lambda\abs{\xi}^2.\]
\end{prop}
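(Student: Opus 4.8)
The plan is to prove the two bounds separately. By the linearity of \eqref{hom_0003} I write $\overline{a}\xi=\E[(A+S)(\Phi_\xi+\xi)]$ for $\Phi_\xi:=\xi_i\Phi_i$, so that $\Phi_\xi+\xi=\xi_i(\Phi_i+e_i)$ and $\Phi_\xi\in L^2_{\textrm{pot}}(\O)$ solves, by the linearity of \eqref{re_1},
\[-D\cdot(A+S)(\Phi_\xi+\xi)=0\;\;\textrm{in}\;\;\mathcal{D}'(\O).\]
For the upper bound I would estimate $\abs{\overline{a}\xi}\leq\E[(\Lambda+\abs{S})\abs{\Phi_\xi+\xi}]$ using $\abs{A\zeta}\leq\Lambda\abs{\zeta}$ from \eqref{intro_ue}, and then apply Cauchy--Schwarz together with Minkowski's inequality in the forms $\E[(\Lambda+\abs{S})^2]^{\frac12}\leq\Lambda+\E[\abs{S}^2]^{\frac12}$ and $\E[\abs{\Phi_\xi+\xi}^2]^{\frac12}\leq\sum_i\abs{\xi_i}\E[\abs{\Phi_i+e_i}^2]^{\frac12}\leq\abs{\xi}(\sum_i\E[\abs{\Phi_i+e_i}^2])^{\frac12}$, the last step by Cauchy--Schwarz on the finite sum. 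This gives the first estimate (the constant two is not optimal).

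The lower bound is where the real content lies, and it rests on the energy identity \eqref{cor_008}. Since $d\vee(2+\d)\geq 2$ the stream matrix satisfies $S\in L^2(\O;\R^{d\times d})$, so $F:=-(A+S)\xi$ lies in $L^2(\O;\R^d)$ and the equation above for $\Phi_\xi$ takes the form $-D\cdot(A+S)\Phi_\xi=-D\cdot F$. By the uniqueness in Proposition~\ref{prop_corrector}, $\Phi_\xi$ is the solution furnished by that proposition, and \eqref{cor_008} reads
\[\E[A\Phi_\xi\cdot\Phi_\xi]=\E[F\cdot\Phi_\xi]=-\E[(A+S)\xi\cdot\Phi_\xi].\]
Using this to eliminate $\E[A\Phi_\xi\cdot\Phi_\xi]$ from the expansion of $\E[A(\Phi_\xi+\xi)\cdot(\Phi_\xi+\xi)]$, and invoking the skew-symmetry of $S$ through $(S\xi)\cdot\xi=0$ and $(S\xi)\cdot\Phi_\xi=-(S\Phi_\xi)\cdot\xi$ to recombine the resulting cross terms, one arrives at the identity
\[\overline{a}\xi\cdot\xi=\E[(A+S)(\Phi_\xi+\xi)\cdot\xi]=\E[A(\Phi_\xi+\xi)\cdot(\Phi_\xi+\xi)].\]
The lower bound then follows from the pointwise ellipticity $A\zeta\cdot\zeta\geq\lambda\abs{\zeta}^2$ in \eqref{intro_ue} and Jensen's inequality: $\overline{a}\xi\cdot\xi\geq\lambda\E[\abs{\Phi_\xi+\xi}^2]\geq\lambda\abs{\E[\Phi_\xi+\xi]}^2=\lambda\abs{\xi}^2$, since $\E[\Phi_\xi]=0$.

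I expect the identity $\overline{a}\xi\cdot\xi=\E[A(\Phi_\xi+\xi)\cdot(\Phi_\xi+\xi)]$ to be the main point: it is precisely the place where the skew-symmetry of $S$ forces the stream-matrix contribution to cancel after the substitution from \eqref{cor_008}. The subtlety to be careful about — which is exactly why one routes the argument through the energy identity rather than testing the corrector equation against $\Phi_\xi$ directly — is that the flux $(A+S)(\Phi_\xi+\xi)$ appearing in \eqref{hom_0003} is only $L^{p_d}$-integrable with $p_d<2$, whereas the right-hand side $F=-(A+S)\xi$ of the corrector equation, being linear in the constant vector $\xi$, is genuinely square-integrable and hence admissible in Proposition~\ref{prop_corrector}.
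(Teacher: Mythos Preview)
Your proof is correct and follows essentially the same route as the paper: for the upper bound you use H\"older/Cauchy--Schwarz and Minkowski exactly as the paper does (your observation that the factor $2$ is not needed is right), and for the lower bound you derive the identity $\overline{a}\xi\cdot\xi=\E[A(\Phi_\xi+\xi)\cdot(\Phi_\xi+\xi)]$ from the energy identity \eqref{cor_008} and the skew-symmetry of $S$, then conclude via ellipticity, Jensen, and $\E[\Phi_\xi]=0$. Your explicit remark on why one must route the argument through \eqref{cor_008}---because the flux $(A+S)(\Phi_\xi+\xi)$ is only $L^{p_d}$-integrable and cannot be paired directly with $\Phi_\xi\in L^2$---is a useful clarification that the paper leaves implicit.
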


\begin{proof}  The uniform ellipticity, H\"older's inequality, the linearity, and the definition of $\overline{a}$ prove that, for every $\xi\in\R^d$,
\begin{align*}
\abs{\overline{a}\xi} =\abs{\xi_i\E\left[(A+S)(\Phi_i+e_i)\right]} & \leq 2\left(\Lambda+\E\left[\abs{S}^2\right]^\frac{1}{2}\right)\abs{\xi_i}\E\left[\abs{\Phi_i+e_i}^2\right]^\frac{1}{2}
\\ & \leq 2 \left(\Lambda+\E\left[\abs{S}^2\right]^\frac{1}{2}\right)\left(\sum_{i=1}^d\E[\abs{\Phi_i+e_i}^2]\right)^\frac{1}{2}\abs{\xi}.
\end{align*}
Similarly it follows by definition that
\[\overline{a}\xi\cdot \xi = \xi_i^2\E\left[(A+S)(\Phi_i+e_i)\cdot e_i\right],\]
and it follow from the skew-symmetry of $S$, the energy identity \eqref{cor_008}, the fact that $-D\cdot (A+S)\Phi_i=0$ in $L^2_{\textrm{pot}}(\O)$, the uniform ellipticity, Jensen's inequality, and $\E[\Phi_i]=0$ that
\begin{align*}
\overline{a}\xi\cdot \xi & = \xi_i^2\E\left[(A+S)(\Phi_i+e_i)\cdot (\Phi_i+e_i)\right] =\xi_i^2\E\left[A(\Phi_i+e_i)\cdot (\Phi_i+e_i)\right],
\\ & \geq \lambda \xi_i^2\E\left[\abs{\Phi_i+e_i}^2\right] \geq \lambda\xi^2_i\abs{\E[\Phi_i+e_i]}^2=\lambda\abs{\xi}^2, &
\end{align*}
which completes the proof. \end{proof}

\begin{thm}\label{thm_ts}  Assume \eqref{steady}.  Let $\a\in(0,1)$, let $U\subseteq\R^d$ be a bounded $\C^{2,\a}$-domain, let $f\in\C^\a(U)$, and let $g\in\C^{2,\a}(\partial U)$.  For every $\ve\in(0,1)$ let $u^\ve\in H^1(U)$ be the unique solution of \eqref{hom_2} and let $v\in H^1(U)$ be the unique solution of \eqref{hom_3}.  Then, almost surely as $\ve\rightarrow 0$,
\[\lim_{\ve\rightarrow 0}\norm{u^\ve-v-\ve\phi^\ve_i\partial_iv}_{H^1(U)}=0.\]
\end{thm}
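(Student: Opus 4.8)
The plan is to estimate the energy of a boundary‑corrected two‑scale expansion, following the scheme of \cite{GloNeuOtt2020} and the computation sketched around \eqref{intro_00000}. Since $\overline{a}$ is a constant uniformly elliptic matrix by Proposition~\ref{prop_hom_ue} and the data are $\C^{2,\a}$‑regular, classical Schauder theory applied to \eqref{hom_3} gives $v\in\C^{2,\a}(\overline{U})$; in particular $\nabla v$ and $\nabla^2 v$ are bounded on $U$. I fix a smooth cutoff $\eta=\eta_\ve\colon U\to[0,1]$ equal to $0$ within a boundary layer of width $\delta_\ve$ and to $1$ outside width $2\delta_\ve$, with $\abs{\nabla\eta}\leq c\delta_\ve^{-1}$, where $\delta_\ve\to 0$, $\delta_\ve/\ve\to\infty$, and $\delta_\ve$ is chosen (depending on $\o$, by a diagonal argument) to vanish slowly enough relative to the almost surely null but unquantified sequences $\norm{\ve\phi_i^\ve}_{L^{2_*}(U)}$ and $\norm{\ve\sigma_i^\ve}_{L^2(U)}$. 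Set $\tilde w^\ve=u^\ve-v-\ve\eta\phi_i^\ve\partial_i v$; since $u^\ve=g=v$ on $\partial U$ and $\eta$ vanishes there, $\tilde w^\ve\in H^1_0(U)$.

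Next I derive the equation for $\tilde w^\ve$. Using the physical corrector equation \eqref{hom_03}, the identity $(a^\ve+s^\ve)(\nabla\phi_i^\ve+e_i)=\overline{a}e_i+\ve\nabla\cdot\sigma_i^\ve$ coming from \eqref{hom_0003} and \eqref{pfc_1}, the skew‑symmetry of $\sigma_i$ (which annihilates contractions against the symmetric second derivatives of $v$), and the grouping $\eta\overline{a}\nabla v+(1-\eta)(a^\ve+s^\ve)\nabla v=\overline{a}\nabla v+(1-\eta)(a^\ve+s^\ve-\overline{a})\nabla v$, all scalar right‑hand side contributions cancel against $f=-\nabla\cdot\overline{a}\nabla v$ and one obtains
\[-\nabla\cdot(a^\ve+s^\ve)\nabla\tilde w^\ve=\nabla\cdot G^\ve\quad\text{in}\;\;U,\]
where $G^\ve\in L^2(U;\R^d)$ consists of the interior terms $\ve\eta\phi_i^\ve(a^\ve+s^\ve)\nabla\partial_i v$ and $\ve\sigma_i^\ve\nabla(\eta\partial_i v)$ (suitably contracted), together with the boundary‑layer terms $(1-\eta)(a^\ve+s^\ve-\overline{a})\nabla v$, $\ve(\nabla\eta)\phi_i^\ve\partial_i v\,(a^\ve+s^\ve)$ and $\ve\sigma_i^\ve(\nabla\eta)\partial_i v$. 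Because $s^\ve$ is only $L^{d\vee(2+\d)}$‑integrable one cannot test this equation directly against $\tilde w^\ve$; instead, exactly as in the proofs of Propositions~\ref{prop_hom_wp}, \ref{prop_approx_cor} and \ref{prop_corrector}, I work from the weak formulations of \eqref{hom_2} and \eqref{hom_3}, test against the convolved truncations $\tilde w^\ve_n=(\tilde w^\ve\wedge n)\vee(-n)$, use the skew‑symmetry of $s^\ve$ with $\nabla\cdot(\nabla\cdot s^\ve)=0$ and $\nabla\tilde w^\ve\,\tilde w^\ve_n=\nabla(\tilde w^\ve\tilde w^\ve_n-\tfrac{1}{2}(\tilde w^\ve_n)^2)$ to see that the contribution of $s^\ve$ to the bilinear form vanishes, and pass $n\to\infty$ by monotone and dominated convergence. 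This yields $\lambda\norm{\nabla\tilde w^\ve}_{L^2(U)}^2\leq\norm{G^\ve}_{L^2(U)}\norm{\nabla\tilde w^\ve}_{L^2(U)}$, hence $\norm{\nabla\tilde w^\ve}_{L^2(U)}\leq\lambda^{-1}\norm{G^\ve}_{L^2(U)}$.

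It then remains to prove $\norm{G^\ve}_{L^2(U)}\to 0$ almost surely. For the interior terms this is the core of the argument and rests on Proposition~\ref{prop_sublinear}: since $\nabla\phi_i$ is stationary and $L^2$, $\phi_i$ is sublinear in $L^{2_*}$ with $\nicefrac{1}{2_*}=\nicefrac{1}{2}-\nicefrac{1}{d}$ (in any $L^q$ if $d=2$), while $Q_i=(A+S)(\Phi_i+e_i)\in L^{p_d}$, so by the endpoint case $q=p_*$ of Proposition~\ref{prop_sublinear} the flux corrector $\sigma_i$ is sublinear in $L^{(p_d)_*}$ with $(p_d)_*\geq 2$; combined with the ergodic‑theorem bound $\norm{s^\ve}_{L^{d\vee(2+\d)}(U)}=O(1)$ and $\nabla v,\nabla^2 v\in L^\infty$, Hölder's inequality with the crucial exponent relation $\nicefrac{1}{2_*}+\nicefrac{1}{d\vee(2+\d)}=\nicefrac{1}{2}$ forces $\ve\eta\phi_i^\ve(a^\ve+s^\ve)\nabla\partial_i v\to0$ and $\ve\sigma_i^\ve\nabla(\eta\partial_i v)\to0$ in $L^2(U)$. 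The boundary‑layer terms vanish because their supports shrink: $\norm{(1-\eta)(a^\ve+s^\ve-\overline{a})\nabla v}_{L^2(U)}\lesssim\abs{\{\dist(\cdot,\partial U)\leq 2\delta_\ve\}}^{\nicefrac{1}{2}-\nicefrac{1}{d\vee(2+\d)}}(\Lambda+\norm{s^\ve}_{L^{d\vee(2+\d)}})\norm{\nabla v}_{L^\infty}\to0$ using $d\vee(2+\d)>2$, while $\ve(\nabla\eta)\phi_i^\ve\partial_i v\,(a^\ve+s^\ve)$ and $\ve\sigma_i^\ve(\nabla\eta)\partial_i v$ are $O(\delta_\ve^{-1})$ on a set of measure $O(\delta_\ve)$ and tend to $0$ in $L^2(U)$ by the slow choice of $\delta_\ve$ together with the covering estimate $\norm{s^\ve}_{L^{d\vee(2+\d)}(\{\dist(\cdot,\partial U)\leq 2\delta_\ve\})}\lesssim\delta_\ve^{\nicefrac{1}{d\vee(2+\d)}}$ (valid since $\delta_\ve/\ve\to\infty$). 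Thus $\nabla\tilde w^\ve\to0$ in $L^2(U)$, and since $w^\ve-\tilde w^\ve=-\ve(1-\eta)\phi_i^\ve\partial_i v$ has $H^1(U)$‑norm tending to $0$ by the same boundary‑layer estimates together with $\norm{(\nabla\phi_i)(\nicefrac{\cdot}{\ve})}_{L^2(\{\dist(\cdot,\partial U)\leq 2\delta_\ve\})}\to0$, we conclude $\norm{u^\ve-v-\ve\phi_i^\ve\partial_i v}_{H^1(U)}\to0$.

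The main obstacle is the boundary layer. Away from $\partial U$ the oscillating flux $(a^\ve+s^\ve-\overline{a})\nabla v$ and the gradient $\nabla\phi_i^\ve$ are small only in a weak sense, not in $L^2$, and near $\partial U$ they are genuinely $O(1)$, while the cutoff derivative costs a factor $\delta_\ve^{-1}$; balancing these is what forces both the use of the higher $L^{d\vee(2+\d)}$‑integrability of $S$ (so that $d\vee(2+\d)>2$ produces a genuine volume gain on the shrinking layer and the flux corrector is $L^2$‑sublinear at the endpoint) and the $\o$‑dependent, slowly shrinking choice of $\delta_\ve$ needed to beat the unquantified rates in Proposition~\ref{prop_sublinear} — the usual price of working only with qualitative ergodicity. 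A secondary recurring point, already seen in Section~\ref{sec_prob}, is that the unbounded skew part $s^\ve$ precludes direct testing, so every energy identity must be obtained through the truncation‑and‑convolution device.
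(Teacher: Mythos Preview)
Your approach is essentially the paper's: introduce a boundary cutoff into the two-scale expansion, derive \eqref{sh_32} using the flux correctors $\sigma_i$, obtain the energy estimate for the cut-off error via the truncation device of Proposition~\ref{prop_hom_wp}, and then control the interior terms by Proposition~\ref{prop_sublinear} together with the H\"older pairing $\nicefrac{1}{2_*}+\nicefrac{1}{q_d}=\nicefrac{1}{2}$. The only substantive difference is procedural. The paper keeps the cutoff width $\rho$ \emph{fixed}, first sends $\ve\to 0$ (so that the ergodic theorem applies on the fixed sets $\{(1-\eta_\rho)\neq 0\}$ and $\{\nabla\eta_\rho\neq 0\}$, yielding bounds of order $\rho$), and only afterwards lets $\rho\to 0$. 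This two-parameter limit is what you are effectively re-encoding through your $\omega$-dependent diagonal choice of $\delta_\ve$; the paper's version is cleaner and avoids the diagonal entirely.

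One point in your write-up is not justified: the quantitative covering estimate $\norm{s^\ve}_{L^{q_d}(\{\dist(\cdot,\partial U)\leq 2\delta_\ve\})}\lesssim \delta_\ve^{1/q_d}$ does not follow from $\delta_\ve/\ve\to\infty$ under mere ergodicity, since it would require the ergodic averages of $\abs{s}^{q_d}$ to converge \emph{uniformly} over the $O(\delta_\ve^{-(d-1)})$ balls in the cover, each with a moving center. Under qualitative ergodicity one only gets $\norm{s^\ve}_{L^{q_d}(\{\dist\leq 2\delta_\ve\})}=o(1)$ (by sandwiching the shrinking layer inside the fixed layer of width $\rho$ and using the ergodic theorem there, exactly the device you already use for $\norm{\nabla\phi_i^\ve}_{L^2(\{\dist\leq 2\delta_\ve\})}\to 0$). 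Fortunately the quantitative estimate is not needed: for the terms carrying $\nabla\eta$ it suffices to bound $\norm{s^\ve}_{L^{q_d}(U)}=O(1)$ and absorb the factor $\delta_\ve^{-1}$ into the diagonal choice via $\delta_\ve^{-1}\norm{\ve\phi_i^\ve}_{L^{2_*}(U)}\to 0$ and $\delta_\ve^{-1}\norm{\ve\sigma_i^\ve}_{L^2(U)}\to 0$. With that repair your argument goes through; the hypothesis $\delta_\ve/\ve\to\infty$ then plays no role and can be dropped.
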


\begin{proof}  We will essentially study the equation satisfied by the homogenization error
\begin{equation}\label{sh_29}w^\ve=u^\ve-v-\ve\phi^\ve_i\partial_iv,\end{equation}
after introducing a cutoff to ensure that $w^\ve$ vanishes along the boundary.  Using the fact that $U\subseteq\R^d$ is a bounded $\C^{2,\a}$-domain, for each $\rho\in(0,1)$ we define $\eta_\rho\colon\overline{U}\rightarrow[0,1]$ to be a smooth cutoff function satisfying $\eta_\rho(x)=1$ if $\dd(x,\partial U)\geq 2\rho$, $\eta_\rho(x)=0$ if $\dd(x,\partial U)<\rho$, and $\abs{\nabla\eta_\rho(x)}\leq \nicefrac{c}{\rho}$ for some $c\in(0,\infty)$ independent of $\rho\in(0,1)$.  For each $\ve,\rho\in(0,1)$ we define
\[w^{\ve,\rho}=u^\ve-v-\ve\phi^\ve_i\eta_\rho\partial_iv\;\;\textrm{in}\;\;H^1_0(U).\]
It follows by definition that
\[\nabla w^{\ve,\rho}=\nabla u^\ve-\nabla v-\eta_\rho\partial_iv\nabla\phi^\ve_i-\ve\phi^\ve_i\nabla(\eta_\rho\partial_i v).\]
Distributionally, using the equation satisfied by $u^\ve$,
\[-\nabla\cdot (a^\ve+s^\ve)\nabla w^{\ve,\rho} = f+\nabla\cdot (a^\ve+s^\ve)\nabla v+\nabla\cdot(a^\ve+s^\ve)\left(\eta_\rho\partial_iv\nabla\phi^\ve_i +\ve\phi^\ve\nabla(\eta_\rho\partial_iv)\right),\]
and, using the equation satisfied by $v$,
\begin{align}\label{sh_30} -\nabla\cdot (a^\ve+s^\ve)\nabla w^{\ve,\rho} & = \nabla\cdot\left[ (1-\eta_\rho)\left((a^\ve+s^\ve)-\overline{a}\right)\nabla v\right]
\\ \nonumber & \quad +\nabla \cdot\left[\left((a^\ve+s^\ve)(\nabla\phi_i+e_i)-\overline{a}e_i\right)\eta_\rho\partial_iv\right]
\\ \nonumber & \quad + \nabla\cdot\left[(a^\ve+s^\ve)\ve\phi^\ve_i\nabla(\eta_\rho\partial_iv)\right].
\end{align}
The second term on the righthand side of \eqref{sh_30} is defined for each $i\in\{1,\ldots,d\}$ by $q^\ve_i(x,\o)=Q_i(\tau_{\nicefrac{x}{\ve}}\o)-\E\left[Q_i\right]$ for the flux $Q_i$ defined by
\[Q_i=(A+S)(\Phi_i+e_i)\;\;\textrm{in}\;\;L^{p_d}(\O;\R^d),\]
for $p_d$ defined in \eqref{pd_exponent}.  The $q^\ve_i$ do not vanish in a strong sense as $\ve\rightarrow 0$, and it is for this reason that we introduce the flux correctors defined in Proposition~\ref{prop_flux_corrector}.  For each $i\in\{1,\ldots,d\}$ let $\sigma_i=(\sigma_{ijk})\in W^{1,p_d}_{\textrm{loc}}(\R^d;\R^{d\times d})$ be as in Proposition~\ref{prop_flux_corrector} and let $\sigma_i^\ve(x,\o)=\sigma_i(\nicefrac{x}{\ve},\o)$.  Then, for every $\psi\in \C^\infty_c(U)$,
\begin{align*}
\int_{\R^d}q^\ve_i\eta_\rho\partial_i v\cdot\nabla\psi & = \int_{\R^d}(\eta_\rho\partial_i v)q^\ve_{ij}\partial_j\psi=\int_{\R^d}(\eta_\rho\partial_iv)\partial_k(\ve\sigma^\ve_{ijk})\partial_j\psi
\\ & =-\int_{\R^d}\ve\sigma^\ve_{ijk}\partial_k(\eta_\rho\partial_i v)\partial_j\psi,
\end{align*}
where the final inequality relies on the skew-symmetry.  So, as distributions on $\R^d$,
\[\nabla\cdot\left[q^\ve_i\eta_\rho\partial_i v\right]=-\nabla\cdot\left[\ve\sigma^\ve_i\nabla(\eta_\rho\partial_i v)\right].\]
Returning to \eqref{sh_30}, we conclude that
\begin{align}\label{sh_32}
& -\nabla\cdot (a^\ve+s^\ve)\nabla w^{\ve,\rho}
\\ & \nonumber = \nabla\cdot\left[ (1-\eta_\rho)\left((a^\ve+s^\ve)-\overline{a}\right)\nabla v\right] + \nabla\cdot\left[\left(\ve\phi^\ve_i(a^\ve+s^\ve)-\ve\sigma^\ve_i\right)\nabla(\eta_\rho\partial_iv)\right].
\end{align}
The uniform ellipticity, H\"older's inequality, Young's inequality, and the definition of $\eta_\rho$ prove that, for some $c\in(0,\infty)$ independent of $\ve,\rho\in(0,1)$, for $q_d=d\vee(2+\d)$ and $\nicefrac{1}{2_*}=\nicefrac{1}{2}-\nicefrac{1}{q_d}$,
\begin{align*}\int_U\abs{\nabla w^{\ve,\rho}}^2 & \leq c\norm{\nabla v}_{L^\infty(U;\R^d)}^2\left(\int_U(1-\eta_\rho)^2\left(\abs{a^\ve}^2+\abs{s^\ve}^2\right)\right)
\\ \nonumber & \quad +c\norm{\nabla (\eta_\rho\partial_iv)}_{L^\infty(U;\R^d)}^2\left(\int_U \abs{a^\ve}^{q_d}+\abs{s^\ve}^{q_d}\right)^\frac{2}{q_d}\left(\int_U\abs{\ve\phi^\ve_i}^{2_*}\right)^\frac{2}{2_*}
\\ \nonumber & \quad +c\norm{\nabla (\eta_\rho\partial_iv)}^2_{L^\infty(U;\R^d)}\left(\int_U\abs{\ve\sigma^\ve_i}^2\right).
\end{align*}
The regularity of the domain and Schauder estimates (cf.\ eg.\ Gilbarg and Trudinger \cite[Chapter~6]{GilTru2001}) prove that, for some $c\in(0,\infty)$ depending on $U$,
\begin{equation}\label{sh_34}\norm{v}_{\C^{2,\alpha}(U)}\leq c\left(\norm{f}_{\C^\a(U)}+\norm{g}_{\C^{2,a}(\partial U)}\right).\end{equation}
It follows almost surely from Proposition~\ref{prop_sublinear}, \eqref{sh_34}, $\Phi_i\in L^2_{\textrm{pot}}(\O)$, $\Sigma_{ijk}\in L^{p_d}(\O;\R^d)$, $S\in L^{q_d}(\O;\R^{d\times d})$, the uniform ellipticity, the ergodic theorem, and the definition of $\eta_\rho$ that, for each $\rho\in(0,1)$, for $c\in(0,\infty)$ depending on $U$ but independent of $\rho\in(0,1)$,
\begin{equation}\label{sh_35}\limsup_{\ve\rightarrow 0}\int_U\abs{\nabla w^{\ve,\rho}}^2\leq c\rho\norm{\nabla v}^2_{L^\infty(U;\R^d)}\E\left[\abs{A}^2+\abs{S}^2\right].\end{equation}
Then for each $\ve\in(0,1)$ let $w^\ve\in H^1(U)$ be defined by \eqref{sh_29} and for every $\rho\in(0,1)$ observe that
\[\nabla w^\ve=\nabla w^{\ve,\rho}+\nabla \phi^\ve_i(1-\eta_\rho)\partial_iv+\ve\phi^\ve_i\nabla\left((1-\eta_\rho)\partial_i v)\right).\]
It follows from \eqref{sh_35}, the triangle inequality, and Young's inequality that, for $c\in(0,\infty)$ independent of $\ve,\rho\in(0,1)$,
\begin{align*}
& \int_U\abs{\nabla w^\ve}^2
\\ & \leq c\left(\int_U\abs{\nabla w^{\ve,\rho}}^2+\norm{\partial_i v}^2_{L^\infty}\int_U(1-\eta_\rho)^2\abs{\nabla\phi^\ve_i}^2+\norm{\nabla((1-\eta_\rho)\partial_iv)}^2_{L^\infty}\int_U\abs{\ve\phi^\ve_i}^2 \right).
\end{align*}
Proposition~\ref{prop_sublinear}, \eqref{sh_34}, the definition of $\eta_\rho$, and the ergodic theorem therefore prove almost surely that for every $\rho\in(0,1)$, for $c\in(0,\infty)$ depending on $U$ but independent of $\rho$,
\[\limsup_{\ve\rightarrow 0} \int_U\abs{\nabla w^\ve}^2\leq c\rho\left(\norm{\nabla v}^2_{L^\infty(U;\R^d)}E\left[\abs{A}^2+\abs{S}^2\right]+\norm{\partial_i v}^2_{L^\infty(U)}\E\left[\abs{\Phi_i}^2\right]\right).\]
Passing to the limit $\rho\rightarrow 0$, we conclude that, almost surely as $\ve\rightarrow 0$,
\begin{equation}\label{sh_36}\nabla w^\ve\rightarrow 0\;\;\textrm{strongly in}\;\;L^2(U;\R^d).\end{equation}
Finally, since Proposition~\ref{prop_sublinear} and $\Phi_i\in L^2_{\textrm{pot}}(\O)$ prove that, almost surely as $\ve\rightarrow 0$,
\[\ve\phi^\ve_i\partial_i v\rightarrow 0\;\;\textrm{strongly in}\;\;L^2(U),\]
it follows from the uniform boundedness of the $u^\ve-v$ in $H^1_0(U)$, the Sobolev embedding theorem, and \eqref{sh_36} that, almost surely as $\ve\rightarrow 0$,
\begin{equation}\label{sh_38} w^\ve\rightarrow 0\;\;\textrm{strongly in}\;\;L^2(U).\end{equation}
In combination \eqref{sh_36} and \eqref{sh_38} complete the proof.  \end{proof}

\section{The large-scale regularity estimate}\label{sec_lsr}

In this section, motivated by the methods of \cite{GloNeuOtt2020}, we will establish an almost sure intrinsic large-scale $\C^{1,\a}$-regularity estimate for solutions $u\in H^1_{\textrm{loc}}(\R^d)$ of
\begin{equation}\label{ls_1}-\nabla\cdot(a+s)\nabla u=0\;\;\textrm{in}\;\;\R^d.\end{equation}
In analogy with the the characterization of H\"older spaces by Morrey and Campanato, for each $\a\in(0,1)$ and $R\in(0,\infty)$ we define the excess $\Exc(u;R)$ to be the large-scale $\C^{1,\a}$-Campanato semi-norm with respect to the intrinsic $(a+s)$-harmonic coordinates $(x_i+\phi_i)$:
\[\Exc(u;R) = \inf_{\xi\in\R^d}\frac{1}{R^{2\a}}\fint_{B_R}\abs{\nabla u-\xi-\nabla\phi_\xi}^2.\]
Formally the homogenization of \eqref{ls_1} in $H^1(U)$ and the ergodic theorem imply that the excess is well-controlled for large radii $R$ by the regularity of an $\overline{a}$-harmonic function and the energy of the random gradient fields $\Phi_i$.  The arguments of this section make this precise.

The section is organized as follows.  In Propositions~\ref{prop_ue_int} and \ref{prop_ue_est} below we recall some standard results from constant-coefficient elliptic regularity theory.  We estimate the energy of the two-scale expansion in Proposition~\ref{prop_hom_energy} below.  We then prove the large-scale H\"older estimate and excess decay in Proposition~\ref{prop_excess} and Theorem~\ref{thm_excess} below.  The proof of excess decay is most closely related to the methods of \cite{GloNeuOtt2020} in the uniformly elliptic setting, and shares aspects of the work \cite{BelFehOtt2018} in the degenerate elliptic setting.  Here, in analogy with the degenerate setting, the regularity estimate comes into effect after controlling both the sublinearity of the correctors and the large-scale averages of the unbounded stream matrix.  In this way Propositions~\ref{prop_hom_energy} and \ref{prop_excess} are wholly analytic and essentially deterministic, taking as input only this large-scale behavior.  Theorem~\ref{thm_excess} combines these statements with the probabilistic input of Proposition~\ref{prop_sublinear} and the ergodic theorem to obtain the complete statement.

\begin{remark}  In this section, we will write $a\lesssim b$ if $a\leq cb$ for a constant $c$ depending only on the dimension and  ellipticity constants.\end{remark}

\begin{prop}\label{prop_ue_int} Let $\overline{a}\in\R^{d\times d}$ be uniformly elliptic and let $v\in H^1_{\textrm{loc}}(\R^d)$ be a weak solution of the equation
\begin{equation}\label{ue_1}-\nabla\cdot \overline{a}\nabla v = 0\;\;\textrm{in}\;\;B_1.\end{equation}
Then, for each $r_1< r_2\in (0,1)$ and $c\in\R$,
\[\int_{B_{r_1}}\abs{\nabla v}^2\lesssim \frac{1}{(r_2-r_1)^2}\int_{B_{r_2}}(v-c)^2.\]
And, for every $\rho\in(0,1)$,
\[\sup_{B_{(1-\rho)}}\left(\abs{\nabla^2 v}^2+\frac{1}{\rho^2}\abs{\nabla v}^2\right)\lesssim \frac{1}{\rho^{2(d+1)}}\int_{B_1}\abs{\nabla v}^2.\]
\end{prop}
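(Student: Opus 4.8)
The plan is to obtain the first bound from the Caccioppoli inequality and the second from iterated interior regularity, both classical for a constant-coefficient operator and both requiring only the boundedness and coercivity of $\overline{a}$, not its symmetry.

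For the first estimate I would fix a cutoff $\eta\in\C^\infty_c(B_{r_2})$ with $0\leq\eta\leq1$, $\eta\equiv1$ on $B_{r_1}$, and $\abs{\nabla\eta}\lesssim(r_2-r_1)^{-1}$, and test \eqref{ue_1} with $\eta^2(v-c)\in H^1_0(B_{r_2})$, which is admissible since $\overline{B}_{r_2}\subseteq B_1$. Expanding $\nabla(\eta^2(v-c))=\eta^2\nabla v+2\eta(v-c)\nabla\eta$ and using coercivity on the first term and boundedness on the second gives
\[\lambda\int_{B_{r_2}}\eta^2\abs{\nabla v}^2\leq 2\Lambda\int_{B_{r_2}}\eta\abs{v-c}\abs{\nabla v}\abs{\nabla\eta},\]
and Young's inequality, used to absorb $\tfrac{\lambda}{2}\int\eta^2\abs{\nabla v}^2$ into the left-hand side, yields $\int_{B_{r_1}}\abs{\nabla v}^2\lesssim(r_2-r_1)^{-2}\int_{B_{r_2}}(v-c)^2$.

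For the second estimate I would first recall that, because $\overline{a}$ is constant, any weak solution of \eqref{ue_1} is smooth in $B_1$ (by difference quotients) and every $\partial_kv$ and $\partial_j\partial_kv$ is again a weak solution of \eqref{ue_1}. Using the interior local boundedness estimate for such solutions, $\sup_{B_{r/2}(x)}\abs{w}\lesssim r^{-d/2}\norm{w}_{L^2(B_r(x))}$, together with the Caccioppoli inequality from the first part, I would estimate, for $x\in B_{1-\rho}$ and using $B_\rho(x)\subseteq B_1$,
\[\abs{\nabla v(x)}\lesssim\rho^{-d/2}\norm{\nabla v}_{L^2(B_\rho(x))}\leq\rho^{-d/2}\norm{\nabla v}_{L^2(B_1)},\]
\[\abs{\nabla^2v(x)}\lesssim\rho^{-d/2}\norm{\nabla^2v}_{L^2(B_{\rho/2}(x))}\lesssim\rho^{-d/2-1}\norm{\nabla v}_{L^2(B_\rho(x))}\leq\rho^{-d/2-1}\norm{\nabla v}_{L^2(B_1)},\]
where the second display applies local boundedness to the solution $\nabla\partial_kv$ on $B_{\rho/2}(x)$ and then Caccioppoli to the solution $\partial_kv$ between $B_{\rho/2}(x)$ and $B_\rho(x)$. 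Squaring, summing over components, and taking the supremum over $B_{1-\rho}$ gives $\sup_{B_{1-\rho}}(\abs{\nabla^2v}^2+\rho^{-2}\abs{\nabla v}^2)\lesssim\rho^{-d-2}\int_{B_1}\abs{\nabla v}^2$, and since $\rho\in(0,1)$ and $d+2\leq2(d+1)$ this is at most $\rho^{-2(d+1)}\int_{B_1}\abs{\nabla v}^2$. Alternatively, one may simply invoke the standard interior $L^\infty$ estimates for constant-coefficient elliptic equations (cf.\ e.g.\ Gilbarg and Trudinger \cite{GilTru2001}).

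I do not expect a genuine obstacle: the argument is routine. The only points needing a word of care are the justification for differentiating \eqref{ue_1} (interior smoothness of constant-coefficient solutions) and the bookkeeping of the nested balls $B_{\rho/2}(x)\subseteq B_\rho(x)\subseteq B_1$ so that the resulting negative power of $\rho$ does not exceed $2(d+1)$.
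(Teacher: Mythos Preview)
Your proof is correct. The first part (Caccioppoli) is identical to the paper's argument. For the second part the paper takes a slightly different but equivalent route: rather than invoking the local $L^\infty$ bound $\sup_{B_{r/2}}\abs{w}\lesssim r^{-d/2}\norm{w}_{L^2(B_r)}$ as a black box, it derives the $L^\infty$ control directly by iterating the Caccioppoli inequality $K=d+2$ times on nested balls to bound $\int_{B_{1-\rho}}\abs{\nabla^k v}^2$ for $k$ up to $d+2$, and then applies the Sobolev embedding $H^{d+2}\hookrightarrow C^2$. Your approach is shorter and in fact yields the sharper exponent $\rho^{-(d+2)}$, while the paper's is self-contained in that it uses only the Caccioppoli inequality proved in the first part; either is perfectly adequate here since the precise power of $\rho$ is not tracked later.
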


\begin{proof}  Let $r_1<r_2\in(0,1)$ and let $\eta\colon \R^d \rightarrow[0,1]$ be a smooth cutoff function satisfying $\eta=1$ on $\overline{B}_{r_1}$ and $\eta=0$ on $\R^d\setminus B_{r_2}$ with $\abs{\nabla\eta}\leq \nicefrac{2}{(r_2-r_1)}$.    After testing \eqref{ue_1} with $\eta^2(v-c)$,
\[\int_{B_1}\left(\overline{a}\nabla v\cdot\nabla v\right)\eta^2 = -2\int_{B_1}\left(\overline{a}\nabla v\cdot \nabla\eta_r\right)(v-c)\eta.\]
The uniform ellipticity, H\"older's inequality, and Young's inequality prove using the definition of $\eta$ that
\begin{equation}\label{ue_2}\int_{B_{r_1}}\abs{\nabla v}^2\lesssim \frac{1}{(r_2-r_1)^2}\int_{B_{r_2}}(v-c)^2.\end{equation}
Let $K\in\N$ and $\rho\in(0,1)$.  Since for every multi-index $\alpha=(\alpha_1,\ldots,\alpha_d)\in\N_0^d$ the partial derivative $\partial^{\a_1}_1\ldots\partial^{\a_d}_d v$ satisfies \eqref{ue_1}, a repeated application of \eqref{ue_2} on the subintervals of length $\nicefrac{\rho}{2K}$ proves that, for every $k\in\{1,\ldots,K\}$,
\[\int_{B_{(1-\rho)}}\abs{\nabla^kv}^2\lesssim \frac{4K^2}{\rho^2}\int_{B_{\left(1-\rho+\frac{\rho}{2K}\right)}}\abs{\nabla^{k-1}v}^2\lesssim\ldots\lesssim \frac{(4K)^{k-1}}{\rho^{2(k-1)}}\int_{B_{\left(1-\rho+\frac{(k-1)\rho}{2K}\right)}}\abs{\nabla v}^2.\]
After choosing $K=d+2$ the Sobolev embedding theorem proves that
\[\sup_{B_{(1-\rho)}}\left(\abs{\nabla^2 v}^2+\frac{1}{\rho^2}\abs{\nabla v}^2\right)\lesssim \frac{1}{\rho^{2(d+1)}}\int_{B_1}\abs{\nabla v}^2.\qedhere\] \end{proof}

\begin{prop}\label{prop_ue_est}  Let $\overline{a}\in\R^{d\times d}$ be uniformly elliptic, let $\psi\in\C^\infty(B_1)$, and let $v\in H^1(B_1)$ be a weak solution of the equation
\[-\nabla\cdot \overline{a}\nabla v = 0\;\;\textrm{in}\;\;B_1\;\;\textrm{with}\;\;v=\psi\;\;\textrm{on}\;\;\partial B_1.\]
Then, for every $p\in[2,\infty)$ there exists $c_1,c_2\in(0,\infty)$ depending on $p$ such that
\[\norm{\nabla v}_{L^p(B_1)}\leq c_1  \norm{\nabla^{\textrm{tan}}\psi}_{L^p(\partial B_1)}\leq c_2 \norm{\nabla^{\textrm{tan}}\psi}_{L^\infty(B_1)},\]
where $\nabla^{\textrm{tan}}$ denotes the tangential derivative on $\partial B_1$.
\end{prop}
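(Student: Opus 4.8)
The plan is to reduce the first inequality---the only substantive part of the statement---to the classical $L^p$-theory of the Dirichlet problem for a constant-coefficient divergence-form operator on a smooth domain. Throughout I allow implicit constants to depend on $p$ in addition to the dimension and the ellipticity constants, and I understand $\psi$ to be smooth up to $\partial B_1$ so that $\psi|_{\partial B_1}$ and its tangential gradient are defined. First come some harmless reductions. The quantity $\nabla^{\mathrm{tan}}\psi$ on $\partial B_1$ depends only on $\psi|_{\partial B_1}$, and adding a constant to $\psi$ changes $v$ only by that same constant (constants are $(\overline{a})$-harmonic) and hence leaves $\nabla v$ unchanged; so I may assume $\fint_{\partial B_1}\psi\,d\sigma=0$, after which the Poincar\'e inequality on the connected compact surface $\partial B_1$ gives $\norm{\psi}_{W^{1,p}(\partial B_1)}\lesssim\norm{\nabla^{\mathrm{tan}}\psi}_{L^p(\partial B_1)}$. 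Moreover, testing against $\varphi\in\C^\infty_c(B_1)$ and integrating by parts, the skew-symmetric part of $\overline{a}$ contributes (summing over $i,j$) $\overline{a}^{\,\mathrm{skew}}_{ij}\int_{B_1}\partial_j v\,\partial_i\varphi=-\overline{a}^{\,\mathrm{skew}}_{ij}\int_{B_1}v\,\partial_i\partial_j\varphi=0$, so $v$ is also a weak solution of the symmetrized equation and I may take $\overline{a}$ symmetric with spectrum in $[\lambda,\Lambda]$; in particular, by Lax--Milgram, $v$ is the unique weak solution. Finally, the second inequality $\norm{\nabla^{\mathrm{tan}}\psi}_{L^p(\partial B_1)}\le c\,\norm{\nabla^{\mathrm{tan}}\psi}_{L^\infty(B_1)}$ is just H\"older's inequality on the compact surface $\partial B_1$ together with $\abs{\nabla^{\mathrm{tan}}\psi}\le\abs{\nabla\psi}$.

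The main step is then the following. Using the standard bounded extension operator for the smooth domain $B_1$ and the embedding $W^{1,p}(\partial B_1)\hookrightarrow W^{1-\nicefrac{1}{p},p}(\partial B_1)$, choose $G\in W^{1,p}(B_1)$ with trace $\psi|_{\partial B_1}$ and $\norm{G}_{W^{1,p}(B_1)}\lesssim\norm{\psi}_{W^{1-\nicefrac{1}{p},p}(\partial B_1)}\le\norm{\psi}_{W^{1,p}(\partial B_1)}\lesssim\norm{\nabla^{\mathrm{tan}}\psi}_{L^p(\partial B_1)}$. Then $v-G\in W^{1,p}_0(B_1)$ and, as distributions, $-\nabla\cdot(\overline{a}\nabla(v-G))=\nabla\cdot(\overline{a}\nabla G)\in W^{-1,p}(B_1)$, with $W^{-1,p}$-norm at most $c\norm{\nabla G}_{L^p(B_1)}$. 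It therefore suffices to invoke the classical fact that $-\nabla\cdot\overline{a}\nabla\colon W^{1,p}_0(B_1)\to W^{-1,p}(B_1)$ is an isomorphism for every $p\in(1,\infty)$; this holds for constant (indeed continuous) coefficients, and may be seen most transparently via the linear change of variables $y=\overline{a}^{-\nicefrac{1}{2}}x$, which turns the operator into the Laplacian on an ellipsoid whose geometry is controlled by $\lambda,\Lambda$ alone, followed by the $L^p$ Calder\'on--Zygmund theory for the Laplacian (cf.\ e.g.\ \cite{GilTru2001}). This gives $\norm{\nabla(v-G)}_{L^p(B_1)}\lesssim\norm{\nabla G}_{L^p(B_1)}$, and the triangle inequality together with the extension bound yields $\norm{\nabla v}_{L^p(B_1)}\lesssim\norm{\nabla^{\mathrm{tan}}\psi}_{L^p(\partial B_1)}$.

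Every ingredient here is soft except the constant-coefficient $W^{-1,p}\to W^{1,p}_0$ isomorphism, which is the one genuine input and is entirely classical; this is the only point where anything beyond the trace theorem, the Poincar\'e inequality on the sphere, H\"older's inequality, and the triangle inequality is needed, so it is where the ``main obstacle'' sits (and it is no obstacle at all). As a sanity check, the case $p=2$ can be carried out entirely by hand: testing the equation directly with the admissible function $v-G$ and using the uniform ellipticity gives $\norm{\nabla v}_{L^2(B_1)}\lesssim\norm{\nabla G}_{L^2(B_1)}$, and choosing $G$ with $\norm{\nabla G}_{L^2(B_1)}\lesssim\norm{\psi}_{H^{\nicefrac{1}{2}}(\partial B_1)}\lesssim\norm{\nabla^{\mathrm{tan}}\psi}_{L^2(\partial B_1)}$ closes the estimate with no harmonic analysis at all.
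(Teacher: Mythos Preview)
Your proof is correct and follows essentially the same route as the paper: normalize to $\fint_{\partial B_1}\psi=0$, extend the boundary data into $B_1$ with $\norm{\nabla G}_{L^p(B_1)}\lesssim\norm{\nabla^{\mathrm{tan}}\psi}_{L^p(\partial B_1)}$, and invoke classical constant-coefficient $L^p$ elliptic theory. The only cosmetic differences are that the paper uses the explicit radial extension $\overline{\psi}(x)=\psi(\nicefrac{x}{\abs{x}})\eta(\abs{x})$ in place of your abstract trace/extension argument, and cites \cite[Theorem~7.1]{GiaMar2012} directly rather than spelling out the $W^{-1,p}\to W^{1,p}_0$ isomorphism.
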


\begin{proof}  We may assume without loss of generality that $\fint_{\partial B_1}\psi = 0$ since subtracting a constant does not change the gradient.  Let $\eta\colon\R \rightarrow[0,1]$ be a smooth function satisfying $\eta=1$ on $[\nicefrac{3}{4},\infty)$ and $\eta=0$ on $(-\infty,\nicefrac{1}{4}]$.  Then $\overline{\psi}(x)=\psi(\nicefrac{x}{\abs{x}})\eta(\abs{x})$ is a smooth extension of $\psi$ into $B_1$ that satisfies, using the fact that $\psi$ has average zero on $\partial B_R$,
\[\norm{\nabla\overline{\psi}}_{L^p(B_1)}\lesssim \norm{\nabla^{\textrm{tan}}\psi}_{L^p(\partial B_1)}\lesssim \norm{\nabla^{\textrm{tan}}\psi}_{L^\infty(\partial B_1)}.\]
It then follows from \cite[Theorem~7.1]{GiaMar2012} and $p\geq 2$ that, for some $c_1,c_2\in(0,\infty)$ depending on $p$,
\[\norm{\nabla v}_{L^p(B_1)}\leq c_1 \norm{\nabla^{\textrm{tan}}\psi}_{L^p(\partial B_1)}\leq c_2\norm{\nabla^{\textrm{tan}}\psi}_{L^\infty(\partial B_1)}.\qedhere\]\end{proof}

\begin{prop}\label{prop_hom_energy}  Assume \eqref{steady}.  Let $R\in(0,\infty)$ and let $u\in H^1(B_R)$ be a distributional solution of
\[-\nabla\cdot (a+s)\nabla u=0\;\;\textrm{in}\;\;B_R.\]
Then there exists $c\in(0,\infty)$ so that for every $\ve\in(0,1)$ there exists an $\overline{a}$-harmonic function $v^\ve\in H^1(B_{\nicefrac{1}{2}})$ such that, for every $\rho\in(0,\nicefrac{1}{4})$,  for $q_d=d\vee(2+\d)$ and $\nicefrac{1}{2_*}=\nicefrac{1}{2}-\nicefrac{1}{q_d}$,
\begin{align*}
& \fint_{B_{\nicefrac{R}{4}}}\abs{\nabla\left( u - v^\ve-\phi_i\partial_iv^\ve\right)}^2
\\ \nonumber & \leq c \left(\ve+\ve^{1-\frac{d-1}{q_d}}\left(\fint_{B_R}\abs{s}^{q_d}\right)^\frac{1}{q_d}\right)\fint_{B_R}\abs{\nabla u}^2
\\ \nonumber & \quad + c\ve^{-(d-1)}\rho^\frac{1}{2_*}\left(1+\left(\fint_{B_R}\abs{s}^{q_d}\right)^\frac{2}{q_d}\right)\fint_{B_R}\abs{\nabla u}^2
\\ \nonumber & \quad + c\rho^{-2(d+1)}R^{-2}\left[\left(1+\left(\fint_{B_R}\abs{s}^{q_d}\right)^\frac{2}{q_d}\right)\left(\fint_{B_R}\abs{\phi_i}^{2_*}\right)^\frac{2}{2_*}+\left(\fint_{B_R}\abs{\sigma_i}^2\right)\right]\fint_{B_R}\abs{\nabla u}^2.
\end{align*}
\end{prop}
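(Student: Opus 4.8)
The plan is to run the quantitative two-scale expansion of \cite{GloNeuOtt2020}, modified to accommodate the unbounded stream matrix. Fix $R$ and, by Fubini, select a radius $r\in(R/2,3R/4)$ along which $\fint_{\partial B_r}(\abs{\nabla^{\textrm{tan}}u}^2+r^{-2}\abs{u-\fint_{B_r}u}^2)\lesssim \fint_{B_R}\abs{\nabla u}^2$. For each $\ve\in(0,1)$, let $v^\ve\in H^1(B_r)$ be the unique $\overline a$-harmonic function whose boundary data on $\partial B_r$ is a mollification at scale $\ve r$ of $u|_{\partial B_r}$. The smoothing is introduced only so that Proposition~\ref{prop_ue_est} applies with exponents $p>2$: it gives $\fint_{B_r}\abs{\nabla v^\ve}^2\lesssim\fint_{B_R}\abs{\nabla u}^2$ together with $L^{2_*}$- and $L^\infty$-bounds for $\nabla v^\ve$ on $B_r$ at the cost of negative powers of $\ve$, while the mismatch $u-v^\ve$ on $\partial B_r$ is only $O(\ve)$ in $L^2$ and so, via a harmonic lift and interpolation, costs only the leading $\ve$-term. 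Interior regularity (Proposition~\ref{prop_ue_int}) on the shell of width $\rho r$ below $\partial B_r$, and on $B_{R/4}$, yields $\norm{\nabla^2 v^\ve}_{L^\infty}^2\lesssim \rho^{-2(d+1)}R^{-2}\fint_{B_R}\abs{\nabla u}^2$ there.

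Next I would form the two-scale error with a cutoff. With $\eta_\rho$ equal to $1$ on $B_{R/4}$, vanishing outside $B_{r(1-\rho)}$, and $\abs{\nabla\eta_\rho}\lesssim(\rho r)^{-1}$, set $w^{\ve,\rho}=u-v^\ve-\eta_\rho\phi_i\partial_iv^\ve$. Using the corrector equation \eqref{hom_03}, the homogenized equation for $v^\ve$, and — as in the proof of Theorem~\ref{thm_ts} — the flux corrector $\sigma_i$ of Proposition~\ref{prop_flux_corrector} to rewrite, via skew-symmetry, $\big((a+s)(\nabla\phi_i+e_i)-\overline ae_i\big)\eta_\rho\partial_iv^\ve$ as $-\nabla\cdot\big[\sigma_i\nabla(\eta_\rho\partial_iv^\ve)\big]$, one obtains up to the boundary mismatch the representation
\[-\nabla\cdot(a+s)\nabla w^{\ve,\rho}=\nabla\cdot\big[(1-\eta_\rho)\big((a+s)\nabla u-\overline a\nabla v^\ve\big)\big]+\nabla\cdot\big[\big(\phi_i(a+s)-\sigma_i\big)\nabla(\eta_\rho\partial_iv^\ve)\big].\]

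Finally I would test this identity against $w^{\ve,\rho}$ (or a slight cutoff of it, to respect the boundary condition). By uniform ellipticity and Young's inequality this reduces to estimating the two right-hand contributions in $L^2$. For the second term one applies H\"older with the exponents $2_*,q_d,2$ — note $\nicefrac{1}{2_*}+\nicefrac{1}{q_d}+\nicefrac{1}{2}=1$ — placing $\phi_i\in L^{2_*}$, $a+s\in L^{q_d}$ (here $S\in L^{q_d}(\O)$ is used), and $\nabla w^{\ve,\rho}\in L^2$, combined with the $L^\infty$-bound on $\nabla^2v^\ve$; after absorbing $\nabla w^{\ve,\rho}$ this produces the last line of the claimed estimate. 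The first (boundary-layer) term, supported in the shell of width $\rho r$, and the boundary-mismatch contribution are handled by the same H\"older splitting together with the $\ve$-dependent $L^p$-bounds for $\nabla v^\ve$ and the estimate $\abs{\textrm{shell}}\lesssim\rho R^d$; these produce the first two lines. Restricting the resulting bound to $B_{R/4}$, where $\eta_\rho=1$ and hence $w^{\ve,\rho}=u-v^\ve-\phi_i\partial_iv^\ve$, gives the proposition.

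The main obstacle is that $u$, and therefore $w^{\ve,\rho}$, is only locally $H^1$, so every use of the corrector equation \eqref{hom_03} and of the skew-symmetry of $s$ against these non-smooth, unbounded functions must be justified by the truncation device of Proposition~\ref{prop_corrector}: truncate at level $n$, exploit $\nabla\cdot(\nabla\cdot s)=0$ and the identity $\nabla\psi\,\psi_n=\nabla(\psi\psi_n-\tfrac12\psi_n^2)$ to kill the stream-matrix terms, and pass $n\to\infty$ using dominated convergence — exactly as in Propositions~\ref{prop_corrector} and \ref{prop_hom_wp}. A secondary, purely bookkeeping obstacle is choosing which $L^p$-norms of $\nabla v^\ve$ to invoke so that the powers of $\ve$ and $\rho$ come out precisely as stated; this is where the exponent $q_d=d\vee(2+\d)$, and with it the $L^{q_d}$-integrability of $S$, enters decisively.
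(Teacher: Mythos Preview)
Your plan is the paper's: Fubini to select a good sphere $\partial B_r$, mollify the trace at scale $\ve$, take the $\overline a$-harmonic extension $v^\ve$, form the cut-off two-scale error $w^{\ve,\rho}$, rewrite the flux term via $\sigma_i$, test, and split by H\"older with the triple $(2,q_d,2_*)$. Two points, however, are not right as written.

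First, the boundary-layer piece in your representation formula is miscomputed. Carrying out the algebra (add and subtract $\eta_\rho(a+s)e_i\partial_iv^\ve$ and $\eta_\rho\overline a e_i\partial_iv^\ve$, then use $-\nabla\cdot(a+s)\nabla u=0$ and $-\nabla\cdot\overline a\nabla v^\ve=0$) gives
\[
-\nabla\cdot(a+s)\nabla w^{\ve,\rho}=\nabla\cdot\big[(1-\eta_\rho)\big((a+s)-\overline a\big)\nabla v^\ve\big]+\nabla\cdot\big[\big(\phi_i(a+s)-\sigma_i\big)\nabla(\eta_\rho\partial_iv^\ve)\big],
\]
with $\nabla v^\ve$ in the first bracket, not $\nabla u$. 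This is not cosmetic: with your version the H\"older step on the shell would force you to control $\big(\int_{\textrm{shell}}\abs{\nabla u}^{2_*}\big)^{2/2_*}$, which you do not have. With the correct version, Proposition~\ref{prop_ue_est} supplies $\big(\int_{B_r}\abs{\nabla v^\ve}^{2\cdot 2_*}\big)^{1/2_*}\lesssim \ve^{-(d-1)}\int_{B_R}\abs{\nabla u}^2$, and combining this with $\abs{\textrm{shell}}\lesssim\rho$ yields exactly the $\ve^{-(d-1)}\rho^{1/2_*}$ line.

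Second, the Fubini selection must also secure $\int_{\partial B_r}\abs{s}^{q_d}\lesssim\int_{B_R}\abs{s}^{q_d}$ and the \emph{full} gradient $\int_{\partial B_r}\abs{\nabla u}^2\lesssim\int_{B_R}\abs{\nabla u}^2$, not just the tangential part. When you test with $w^{\ve,\rho}$ (which equals $u-u^\ve$ on $\partial B_r$), the boundary contribution is $\int_{\partial B_r}(u-u^\ve)\,\nu\cdot\big((a+s)\nabla u-\overline a\nabla v^\ve\big)$; the $s\nabla u$ piece needs the trace bound on $s$, a Dirichlet-to-Neumann estimate for $\nabla v^\ve$ on $\partial B_r$, and an interpolation between the convolution bound $\norm{u-u^\ve}_{L^{2_*}(\partial B_r)}\lesssim\ve\norm{\nabla^{\textrm{tan}}u}_{L^{2_*}(\partial B_r)}$ and the Sobolev bound $\norm{u-u^\ve}_{L^{2_*}(\partial B_r)}\lesssim\norm{\nabla^{\textrm{tan}}u}_{L^q(\partial B_r)}$ with $1/q=1/2_*+1/(d-1)$, which is what produces the factor $\ve^{1-(d-1)/q_d}$. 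Your ``harmonic lift'' sketch does not yet account for the $s$-dependent part of this boundary term.
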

\begin{proof}  We will first consider the case $R=1$ and obtain the general result by scaling.  Let $u\in H^1_{\textrm{loc}}(B_1)$ be an arbitrary distributional solution of the equation
\[-\nabla\cdot (a+s)\cdot \nabla u=0\;\;\textrm{in}\;\; B_1.\]
We will first prove that for every $\ve\in(0,1)$ there exists an $\overline{a}$-harmonic function $v^\ve\in H^1(B_{\nicefrac{1}{2}})$ such that the homogenization error $w^\ve=u-v^\ve-\phi_i\partial_iv^\ve$ satisfies, for every $\rho\in(0,\nicefrac{1}{4})$,
\begin{align}\label{lsr_0}
& \int_{B_{\nicefrac{1}{4}}}\abs{\nabla w^\ve}^2  \lesssim \left(\ve+\ve^{1-\frac{d-1}{q_d}}\left(\int_{B_1}\abs{s}^{q_d}\right)^\frac{1}{q_d}\right)\int_{B_1}\abs{\nabla u}^2
\\ \nonumber & \quad + \ve^{-(d-1)}\rho^\frac{1}{2_*}\left(1+\left(\int_{B_r}\abs{s}^{q_d}\right)^\frac{2}{q_d}\right)\int_{B_1}\abs{\nabla u}^2
\\ \nonumber & \quad + \rho^{-2(d+1)}\left[\left(1+\left(\int_{B_r}\abs{s}^{q_d}\right)^\frac{2}{q_d}\right)\left(\int_{B_r}\abs{\phi_i}^{2_*}\right)^\frac{2}{2_*}+\left(\int_{B_r}\abs{\sigma_i}^2\right)\right]\int_{B_1}\abs{\nabla u}^2,
\end{align}
for the correctors $\phi_i$ defined in \eqref{hom_03}.  Using Fubini's theorem, fix $r\in(\nicefrac{1}{2},\nicefrac{3}{4})$ such that
\begin{equation}\label{lsr_1}\int_{\partial B_r}\abs{\nabla u}^2\leq 4\int_{B_1}\abs{\nabla u}^2\;\;\textrm{and}\;\;\int_{\partial B_r}\abs{s}^{q_d}\leq 4\int_{B_1}\abs{s}^{q_d},\end{equation}
and for every $\ve\in(0,1)$ let $u^\ve$ denote a standard convolution of scale $\ve$ of $u$ on $\partial B_r$.  For each $\ve\in(0,1)$ let $v^\ve \in H^1(B_r)$ solve
\begin{equation}\label{lsr_2}-\nabla\cdot \overline{a}\nabla v^\ve=0\;\;\textrm{in}\;\;B_r\;\;\textrm{with}\;\;v^\ve=u^\ve\;\;\textrm{on}\;\;\partial B_r.\end{equation}
It then follows from Dirichlet-to-Neumann estimates Fabes, Jodeit and Rivi\`ere \cite[Theorem~2.4]{FabJodRiv1978} and Stein \cite[Chapter~7]{Ste1993}, \eqref{lsr_1}, and the fact that the convolution preserves the $L^2$-norm that
\[\int_{\partial B_r}\abs{\nu\cdot \nabla v^\ve} \lesssim \int_{\partial B_r}\abs{\nabla^{\textrm{tan}}v^\ve}^2 = \int_{\partial B_r}\abs{\nabla^{\textrm{tan}}u^\ve}^2\leq \int_{\partial B_r}\abs{\nabla u}^2\lesssim \int_{B_1}\abs{\nabla u}^2,\]
for the outward unit normal $\nu$ to $\partial B_r$.  Finally, for each $\rho\in(0,\nicefrac{1}{4})$ let $\eta_\rho\colon\R^d\rightarrow[0,1]$ be a smooth function satisfying $\eta_\rho=1$ on $\overline{B}_{1-\rho}$, satisfying $\eta_\rho = 0$ on $\R^d\setminus B_{1-\nicefrac{\rho}{2}}$, and satisfying $\abs{\eta_\rho(x)}\leq \nicefrac{c}{\rho}$ for some $c\in(0,\infty)$ independent of $\rho\in(0,\nicefrac{1}{4})$.  The first step will be to estimate the energy of the homogenization error $w^{\ve,\rho}\in H^1_0(B_r)$ defined by
\[w^{\ve,\rho}=u-v^\ve-\phi_i\eta_\rho\partial_i v^\ve.\]
A repetition of the derivation leading to \eqref{sh_32} proves that
\[-\nabla\cdot(a+s)\nabla w^{\ve,\rho} = \nabla\cdot\left[(1-\eta_\rho)((a+s)-\overline{a})\nabla v^\ve\right]+\nabla\cdot\left[\left(\phi_i(a+s)-\sigma_i\right)\nabla(\eta_\rho\partial_iv^\ve)\right]\]
in $B_r$ with boundary condition $w^{\ve,\rho}=u-u^\ve$ on $\partial B_r$, for the flux correctors $\sigma_i$ defined in Proposition~\ref{prop_flux_corrector}.  It follows from H\"older's inequality, Young's inequality, the triangle inequality, the uniform ellpticity, the definition of $\eta_\rho$, and a repetition of the argument leading to \eqref{hom_9} that
\begin{align} \label{lsr_5} & \int_{B_r}\abs{\nabla w^{\ve,\rho}}^2 \lesssim \abs{\int_{\partial B_r} (u-u^\ve)\nu\cdot \left((a+s)\nabla u-\overline{a}\nabla v\right)}
\\ \nonumber & +\rho^{\frac{1}{2_*}}\left(1+\left(\int_{B_r}\abs{s}^{q_d}\right)^\frac{2}{q_d}\right)\left(\int_{B_r}\abs{\nabla v^\ve}^{2\cdot 2_*}\right)^\frac{1}{2_*}
\\ \nonumber &+ \sup_{B_{(1-\nicefrac{\rho}{2})}}(\abs{\nabla(\partial_i v^\ve)}^2+\frac{1}{\rho^2}\abs{\partial_iv^\ve}^2)[(1+(\int_{B_r}\abs{s}^{q_d})^\frac{2}{q_d})(\int_{B_r}\abs{\phi_i}^{2_*})^\frac{2}{2_*}+(\int_{B_r}\abs{\sigma_i}^2)].
\end{align}
H\"older's inequality proves that, for the first term on the righthand side of \eqref{lsr_5}, 
\begin{align}\label{lsr_006}
& \abs{\int_{\partial B_r} (u-u^\ve)\nu\cdot \left((a+s)\nabla u-\overline{a}\nabla v\right)}
\\ \nonumber & \lesssim \left(\left(\int_{\partial B_r}\abs{\nabla u}^2\right)^\frac{1}{2}+\left(\int_{\partial B_r}\abs{\nabla v}^2\right)^\frac{1}{2}\right)\left(\int_{\partial B_r}\abs{u-u^\ve}^{2}\right)^\frac{1}{2}
\\ \nonumber & \quad + \left(\int_{\partial B_r}\abs{s}^{q_d}\right)^\frac{1}{q_d}\left(\int_{\partial B_r}\abs{\nabla u}^2\right)^\frac{1}{2}\left(\int_{\partial B_r}\abs{u-u^\ve}^{2_*}\right)^\frac{1}{2_*}.
\end{align}
Since for each $p\in[1,\infty)$ we have the convolution estimate
\[\left(\int_{\partial B_r}\abs{u-u^\ve}^p\right)^\frac{1}{p}\lesssim \ve\left(\int_{\partial B_r}\abs{\nabla^{\textrm{tan}} u}^p\right)^\frac{1}{p},\]
it follows from \eqref{lsr_1} that the first term on the righthand side of \eqref{lsr_006} is bounded by
\begin{equation}\label{lsr_00006}\left(\left(\int_{\partial B_r}\abs{\nabla u}^2\right)^\frac{1}{2}+\left(\int_{\partial B_r}\abs{\nabla v}^2\right)^\frac{1}{2}\right)\left(\int_{\partial B_r}\abs{u-u^\ve}^{2}\right)^\frac{1}{2}\lesssim \ve\int_{\partial B_r}\abs{\nabla u}^2 \lesssim \ve\int_{B_1}\abs{\nabla u}^2.\end{equation}
For the second term on the righthand side of \eqref{lsr_006}, since the definition of the convolution proves that $\fint_{B_r}(u-u^\ve)=0$, it follows from the Sobolev inequality, the fact that the convolution does not increase $L^p$-norms for $p\in[1,\infty)$, and the triangle inequality that
\begin{equation}\label{lsr_6}\left(\int_{\partial B_r}\abs{u-u^\ve}^{2_*}\right)^\frac{1}{2_*}\lesssim \left(\int_{\partial B_r}\abs{\nabla^{\textrm{tan}} (u-u^\ve)}^q\right)^\frac{1}{q}\lesssim \left(\int_{\partial B_r}\abs{\nabla u}^q\right)^\frac{1}{q},\end{equation}
for $q\in(1,2)$ defined by $\frac{1}{q}=\frac{1}{2_*}+\frac{1}{d-1}$.  Interpolating between the the convolution estimate with $p=2_*$ and \eqref{lsr_6} proves with \eqref{lsr_1} that
\begin{equation}\label{lsr_8} \left(\int_{\partial B_r}\abs{u-u^\ve}^{2_*}\right)^\frac{1}{2_*}\lesssim \ve^{1-\frac{d-1}{q_d}}\left(\int_{\partial B_r}\abs{\nabla u}^2\right)^\frac{1}{2}\lesssim \ve^{1-\frac{d-1}{q_d}}\left(\int_{B_r}\abs{\nabla u}^2\right)^\frac{1}{2}.\end{equation}
In combination \eqref{lsr_1} and \eqref{lsr_8} prove that the second term on the righthand side of \eqref{lsr_006} satisfies
\[\left(\int_{\partial B_r}\abs{s}^{q_d}\right)^\frac{1}{q_d}\left(\int_{\partial B_r}\abs{\nabla u}^2\right)^\frac{1}{2}\left(\int_{\partial B_r}\abs{u-u^\ve}^{2_*}\right)^\frac{1}{2_*}\lesssim \ve^{1-\frac{d-1}{q_d}}\left(\int_{B_1}\abs{s}^{q_d}\right)^\frac{1}{q_d}\int_{B_1}\abs{\nabla u}^2.\]
Returning to \eqref{lsr_006}, it follows from \eqref{lsr_00006} that
\begin{equation}\label{lsr_10}  \abs{\int_{\partial B_r} (u-u^\ve)\nu\cdot \left((a+s)\nabla u-\overline{a}\nabla v\right)}\lesssim \left(\ve+\ve^{1-\frac{d-1}{q_d}}\left(\int_{B_1}\abs{s}^{q_d}\right)^\frac{1}{q_d}\right)\int_{B_1}\abs{\nabla u}^2.\end{equation}
For the second term on the righthand side of \eqref{lsr_5}, since it follows from H\"older's inequality, the definition of the convolution kernel, and \eqref{lsr_1} that
\begin{equation}\label{lsr_11}\sup_{\partial B_r}\abs{\nabla^{\textrm{tan}} u^\ve}\lesssim \ve^{-\frac{d-1}{2}}\left(\int_{\partial B_r}\abs{\nabla^{\textrm{tan}} u}^2\right)^\frac{1}{2}\lesssim \ve^{-\frac{d-1}{2}}\left(\int_{B_1}\abs{\nabla u}^2\right)^\frac{1}{2},\end{equation}
it follows from Proposition~\ref{prop_ue_int} and \eqref{lsr_11} that
\begin{equation}\label{lsr_12}\left(\int_{B_r}\abs{\nabla v^\ve}^{2\cdot 2_*}\right)^\frac{1}{2_*}\lesssim \ve^{-(d-1)}\int_{B_1}\abs{\nabla u}^2.\end{equation}
Therefore, using \eqref{lsr_12}, the second term on the righthand side of \eqref{lsr_5} is bounded by
\begin{align}\label{lsr_14}& \rho^{\frac{1}{2_*}}\left(1+\left(\int_{B_r}\abs{s}^{q_d}\right)^\frac{2}{q_d}\right)\left(\int_{B_r}\abs{\nabla v^\ve}^{2\cdot 2_*}\right)^\frac{1}{2_*}
\\ \nonumber & \lesssim \ve^{-(d-1)}\rho^\frac{1}{2_*}\left(1+\left(\int_{B_r}\abs{s}^{q_d}\right)^\frac{2}{q_d}\right)\int_{B_1}\abs{\nabla u}^2.\end{align}
For the final term of \eqref{lsr_5}, it follows from Proposition~\ref{prop_ue_int} that
\begin{align}\label{lsr_15}
& \sup_{B_{(1-\nicefrac{\rho}{2})}}(\abs{\nabla(\partial_i v^\ve)}^2+\frac{1}{\rho^2}\abs{\partial_iv^\ve}^2)[(1+(\int_{B_r}\abs{s}^{q_d})^\frac{2}{q_d})(\int_{B_r}\abs{\phi_i}^{2_*})^\frac{2}{2_*}+(\int_{B_r}\abs{\sigma_i}^2)]
\\ \nonumber & \lesssim \rho^{-2(d+1)}\left[\left(1+\left(\int_{B_r}\abs{s}^{q_d}\right)^\frac{2}{q_d}\right)\left(\int_{B_r}\abs{\phi_i}^{2_*}\right)^\frac{2}{2_*}+\left(\int_{B_r}\abs{\sigma_i}^2\right)\right]\int_{B_1}\abs{\nabla u}^2.
\end{align}
For every $\ve\in(0,1)$ let $w^\ve=u-v^\ve-\phi_i\partial_i v^\ve$.  It follows from $\rho\in(0,\nicefrac{1}{4})$ and the definition of $\eta_\rho$ that $w^\ve=w^{\ve,\rho}$ in $B_{\nicefrac{1}{4}}$ and therefore it follows from \eqref{lsr_5}, \eqref{lsr_10}, \eqref{lsr_14}, and \eqref{lsr_15} that, for each $\rho\in(0,\nicefrac{1}{4})$,
\begin{align}\label{lsr_17}
& \int_{B_{\nicefrac{r}{4}}}\abs{\nabla w^\ve}^2  \lesssim \left(\ve+\ve^{1-\frac{d-1}{q_d}}\left(\int_{B_1}\abs{s}^{q_d}\right)^\frac{1}{q_d}\right)\int_{B_1}\abs{\nabla u}^2
\\ \nonumber & \quad + \ve^{-(d-1)}\rho^\frac{1}{2_*}\left(1+\left(\int_{B_1}\abs{s}^{q_d}\right)^\frac{2}{q_d}\right)\int_{B_1}\abs{\nabla u}^2
\\ \nonumber & \quad + \rho^{-2(d+1)}\left[\left(1+\left(\int_{B_1}\abs{s}^{q_d}\right)^\frac{2}{q_d}\right)\left(\int_{B_1}\abs{\phi_i}^{2_*}\right)^\frac{2}{2_*}+\left(\int_{B_1}\abs{\sigma_i}^2\right)\right]\int_{B_1}\abs{\nabla u}^2,
\end{align}
which completes the proof of \eqref{lsr_0} with $v^\ve\in H^1(B_{\nicefrac{1}{2}})$ defined by \eqref{lsr_2}.  It then follows by scaling that, for each $R\in(0,\infty)$, for any $u\in H^1(B_R)$ that is a weak solution of
\[-\nabla\cdot (a+s)\cdot \nabla u=0\;\;\textrm{in}\;\; B_R,\]
there exists for every $\ve\in(0,1)$ an $\overline{a}$-harmonic function $v^\ve\in H^1(B_{\nicefrac{R}{2}})$ such that the homogenization error $w=u-v^\ve-\phi_i\partial_i v^\ve$ satisfies, for every $\rho\in(0,\nicefrac{1}{4})$, for some $\overline{c}\in(0,\infty)$ independent of $R$, $\ve$, and $\rho$,
\begin{align}\label{lsr_18}
& \fint_{B_{\nicefrac{R}{4}}}\abs{\nabla w^\ve}^2  \leq\overline{c} \left(\ve+\ve^{1-\frac{d-1}{q_d}}\left(\fint_{B_R}\abs{s}^{q_d}\right)^\frac{1}{q_d}\right)\fint_{B_R}\abs{\nabla u}^2
\\ \nonumber & \quad + \overline{c}\ve^{-(d-1)}\rho^\frac{1}{2_*}\left(1+\left(\fint_{B_R}\abs{s}^{q_d}\right)^\frac{2}{q_d}\right)\fint_{B_R}\abs{\nabla u}^2
\\ \nonumber & \quad + \overline{c}\rho^{-2(d+1)}R^{-2}\left[\left(1+\left(\fint_{B_R}\abs{s}^{q_d}\right)^\frac{2}{q_d}\right)\left(\fint_{B_R}\abs{\phi_i}^{2_*}\right)^\frac{2}{2_*}+\left(\fint_{B_R}\abs{\sigma_i}^2\right)\right]\fint_{B_R}\abs{\nabla u}^2.
\end{align}
The proof follows by considering the rescalings $u^R(x)=R^{-1}u(Rx)$, $\phi^R_i(x)=R^{-1}\phi_i(Rx)$, and $\sigma_i^R(x)=R^{-1}\sigma_i(x)$ and by repeating the argument leading to \eqref{lsr_17} to obtain an $\overline{a}$-harmonic function $\tilde{v}^\ve$ in $H^1(B_{\nicefrac{1}{2}})$ such that the homogenization error $w^{R,\ve}(x)=u^R-v^\ve-\phi_i^R\partial_i\tilde{v}^\ve$ satisfies \eqref{lsr_17} with $\phi_i^R$, $\sigma_i^R$, and $s^R(x)=s(Rx)$.  We then define $v^\ve(x)=R\tilde{v}^\ve(\nicefrac{x}{R})$ and obtain \eqref{lsr_18} from \eqref{lsr_17} after rescaling.  This completes the proof. \end{proof}

\begin{prop}\label{prop_excess} Assume \eqref{steady} and let $q_d=d\vee(2+\d)$.  For every $\a\in(0,1)$ there exist $c_\a,C_\a\in(0,\infty)$ such that if for any $R_1<R_2\in(0,\infty)$ we have, for every $R\in[R_1,R_2]$ and $i\in\{1,\ldots,d\}$,
\[\frac{1}{R}\left(\fint_{B_R}\abs{\phi_i}^{2_*}\right)^\frac{1}{2_*}+\frac{1}{R}\left(\fint_{B_R}\abs{\sigma_i}^2\right)^\frac{1}{2}\leq \nicefrac{1}{C_\a},\]
and we have, for every $R\in[R_1,R_2]$,
\[\left(\fint_{B_R}\abs{s}^{q_d}\right)^\frac{1}{q_d}\leq\E\left[\abs{S}^{q_d}\right]^\frac{1}{q_d}+1\;\;\textrm{and}\;\;\sum_{i=1}^d\fint_{B_R}\abs{\Phi_i+e_i}^2\leq \sum_{i=1}^d\E\left[\abs{\Phi_i+e_i}^2\right]+1,\]
then every weak solution $u\in H^1_{\textrm{loc}}(\R^d)$ of the equation
\[-\nabla\cdot (a+s)\nabla u = 0 \;\;\textrm{in}\;\;\R^d,\]
satisfies
\[R_1^{-2\a}\Exc(u;R_1)\leq c_\a R_2^{-2\a}\Exc(u;R_2).\]
\end{prop}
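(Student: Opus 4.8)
\emph{Strategy.} This is a standard excess-decay iteration in the spirit of \cite{GloNeuOtt2020}. The plan is to prove a one-step improvement $(\theta R)^{-2\a}\Exc(u;\theta R)\le\tfrac12 R^{-2\a}\Exc(u;R)$, valid whenever $[\theta R,R]\subseteq[R_1,R_2]$, for a fixed small ratio $\theta=\theta(\a)\in(0,\tfrac14)$, and then to iterate it along the dyadic scales $\theta^kR_2$, reducing the case of non-dyadic radii to a crude volume comparison. The argument is deterministic: the homogenization is entirely contained in Proposition~\ref{prop_hom_energy}, the constant-coefficient regularity in Proposition~\ref{prop_ue_int}, and the hypotheses of the proposition are precisely what is needed to apply those two results at every scale in $[R_1,R_2]$.

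\emph{Linearization and homogenization.} Fix $R$ with $[\theta R,R]\subseteq[R_1,R_2]$, let $\xi\in\R^d$ attain the infimum defining $\Exc(u;R)$, and set $\tilde u=u-\xi\cdot x-\phi_\xi$ with $\phi_\xi=\xi_i\phi_i$. By linearity and the corrector equation \eqref{hom_03}, $\tilde u$ is again a weak solution of $-\nabla\cdot(a+s)\nabla\tilde u=0$, and $\fint_{B_R}\abs{\nabla\tilde u}^2=R^{2\a}\Exc(u;R)$. Applying Proposition~\ref{prop_hom_energy} to $\tilde u$ on $B_R$ produces, for each $\ve,\rho\in(0,1)$, an $\overline a$-harmonic $v^\ve\in H^1(B_{R/2})$ with $\fint_{B_{R/4}}\abs{\nabla(\tilde u-v^\ve-\phi_i\partial_iv^\ve)}^2\le\mathcal E\,\fint_{B_R}\abs{\nabla\tilde u}^2$; using the hypotheses $R^{-1}\big(\fint_{B_R}\abs{\phi_i}^{2_*}\big)^{1/2_*}\le C_\a^{-1}$, $R^{-1}\big(\fint_{B_R}\abs{\sigma_i}^2\big)^{1/2}\le C_\a^{-1}$ and $\big(\fint_{B_R}\abs{s}^{q_d}\big)^{1/q_d}\le\E[\abs{S}^{q_d}]^{1/q_d}+1$, the factor $\mathcal E$ is bounded by a deterministic quantity made arbitrarily small by choosing $\ve$ small (to kill the first line of the bound in Proposition~\ref{prop_hom_energy}), then $\rho$ small (the second line), then $C_\a$ large (the third line). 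Thus, given any $\delta>0$, one may fix $\ve,\rho$ and a threshold $C_\a(\delta)$ so that $\mathcal E\le\delta$ uniformly over $R\in[R_1,R_2]$; the Dirichlet-to-Neumann estimate from the proof of Proposition~\ref{prop_hom_energy} moreover gives $\fint_{B_{R/4}}\abs{\nabla v^\ve}^2\lesssim\fint_{B_R}\abs{\nabla\tilde u}^2$.

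\emph{Transfer of regularity.} Take $\overline\xi=(\nabla v^\ve)_{B_{\theta R}}$ as competitor in $\Exc(u;\theta R)$. From
\[\nabla\tilde u-\overline\xi-\nabla\phi_{\overline\xi}=\nabla(\tilde u-v^\ve-\phi_i\partial_iv^\ve)+(\nabla v^\ve-\overline\xi)+\phi_i\nabla\partial_iv^\ve+(\partial_iv^\ve-\overline\xi_i)\nabla\phi_i,\]
and $B_{\theta R}\subset B_{R/4}$, bound the four terms, respectively, by $\theta^{-d}\delta\fint_{B_R}\abs{\nabla\tilde u}^2$; by $\theta^2\fint_{B_{R/4}}\abs{\nabla v^\ve}^2$, using the interior estimate $\fint_{B_{\theta R}}\abs{\nabla v^\ve-\overline\xi}^2\lesssim\theta^2\fint_{B_{R/4}}\abs{\nabla v^\ve}^2$ of Proposition~\ref{prop_ue_int}; and, for the last two, by $\sup_{B_{\theta R}}\big(\abs{\nabla^2v^\ve}^2+(\theta R)^{-2}\abs{\nabla v^\ve}^2\big)\lesssim R^{-2}\fint_{B_{R/4}}\abs{\nabla v^\ve}^2$ paired with $\fint_{B_{\theta R}}\abs{\phi_i}^2\le\big(\fint_{B_{\theta R}}\abs{\phi_i}^{2_*}\big)^{2/2_*}\le(\theta R)^2C_\a^{-2}$ and $\sum_i\fint_{B_{\theta R}}\abs{\nabla\phi_i}^2\le 2\sum_i\fint_{B_{\theta R}}\abs{\Phi_i+e_i}^2+2d$, the latter bounded by a deterministic constant via the remaining hypothesis. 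Since $\tilde u-\overline\xi\cdot x-\phi_{\overline\xi}=u-(\xi+\overline\xi)\cdot x-\phi_{\xi+\overline\xi}$, this yields $\Exc(u;\theta R)\le c\,\theta^{-2\a}\big(\theta^{-d}\delta+C'\theta^2\big)\Exc(u;R)$ with $c,C'$ deterministic. Choosing $\theta=\theta(\a)$ small (using $\a\in(0,1)$), then $\delta=\delta(\a)$ small, then $\ve,\rho$ and $C_\a\ge C_\a(\delta)$ accordingly, gives the one-step improvement.

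\emph{Iteration and main obstacle.} For $R_1<R_2$, let $k\ge 0$ be maximal with $\theta^kR_2\ge R_1$; applying the one-step improvement at $R_2,\theta R_2,\dots,\theta^{k-1}R_2$---whose relevant scales all lie in $[R_1,R_2]$---gives $(\theta^kR_2)^{-2\a}\Exc(u;\theta^kR_2)\le 2^{-k}R_2^{-2\a}\Exc(u;R_2)$, and since $R_1\le\theta^kR_2<\theta^{-1}R_1$ the elementary inequality $\fint_{B_{R_1}}\abs{g}^2\le(\theta^kR_2/R_1)^d\fint_{B_{\theta^kR_2}}\abs{g}^2$, applied to the minimizer at scale $\theta^kR_2$, gives $R_1^{-2\a}\Exc(u;R_1)\le\theta^{-d-2\a}(\theta^kR_2)^{-2\a}\Exc(u;\theta^kR_2)\le\theta^{-d-2\a}R_2^{-2\a}\Exc(u;R_2)$, i.e.\ the claim with $c_\a=\theta^{-d-2\a}$. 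I expect the transfer step to be the main obstacle: the cross-terms $\phi_i\nabla\partial_iv^\ve$ and $(\partial_iv^\ve-\overline\xi_i)\nabla\phi_i$ force one to pair a \emph{pointwise} second-derivative bound for the $\overline a$-harmonic $v^\ve$ against the merely \emph{large-scale-averaged} quantities $\fint_{B_{\theta R}}\abs{\phi_i}^2$ and $\fint_{B_{\theta R}}\abs{\nabla\phi_i}^2$ (finite only by hypothesis), and---intertwined with this---the parameters must be ordered so that the $\theta^{-d}$ loss incurred in passing from $B_{R/4}$ to $B_{\theta R}$, together with the unbounded-stream-matrix contributions in Proposition~\ref{prop_hom_energy} (which is where the $L^{q_d}$-integrability of $s$ and the sublinearity of $\sigma_i$ are consumed), do not spoil the gain $\theta^{2\a}$ coming from constant-coefficient regularity. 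Once the one-step improvement is established, the iteration is routine.
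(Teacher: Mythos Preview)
Your proposal is correct and follows essentially the same approach as the paper: a one-step excess decay via Proposition~\ref{prop_hom_energy} and the interior estimates of Proposition~\ref{prop_ue_int}, followed by a dyadic iteration. The only cosmetic differences are that the paper applies the one-step bound to $u$ first and linearizes afterwards by invariance of the excess (thereby sidestepping the question of whether the infimum in $\Exc(u;R)$ is attained, which you assume without justification), and the paper takes $\xi^\ve=\nabla v^\ve(0)$ as competitor rather than the average $(\nabla v^\ve)_{B_{\theta R}}$; both choices yield the same estimates.
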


\begin{proof}  Let $u\in H^1_{\textrm{loc}}(\R^d)$ be a distributional solution of $-\nabla\cdot (a+s)\nabla u = 0$ in $\R^d$, let $R\in(0,\infty)$, and for each $\ve\in(0,1)$ let $w^\ve$ be defined in \eqref{lsr_18} on $B_{\nicefrac{R}{2}}$.  We will first show that, for each $\delta_0\in(0,1)$, there exists $C_1\in(1,\infty)$ depending on $\delta_0$ such that if
\[\left(\fint_{B_R}\abs{s}^{q_d}\right)^\frac{1}{q_d}\leq \E\left[\abs{S}^{q_d}\right]^\frac{1}{q_d}+1,\]
and such that if
\[\sum_{i=1}^d\left(\frac{1}{R^2}\left(\fint_{B_R}\abs{\phi_i}^{2_*}\right)^\frac{2}{2_*}+\frac{1}{R^2}\fint_{B_R}\abs{\sigma_i}^2\right)\leq \frac{1}{C_1},\]
then there exists a deterministic $\ve_0\in(0,1)$ depending on $\delta_0$ such that
\begin{equation}\label{lsr_19}\fint_{B_{\nicefrac{R}{4}}}\abs{\nabla w^{\ve_0}}^2\leq \delta_0\fint_{B_R}\abs{\nabla u}^2,\end{equation}
for $w^{\ve_0}$ defined in \eqref{lsr_18}.  First fix $\ve_0\in(0,1)$ such that
\begin{equation}\label{lsr_20}\overline{c}\left(\ve_0+\ve_0^{1-\frac{d-1}{q_d}}\left(\E\left[\abs{S}^{q_d}\right]^{\frac{2}{q_d}}+1\right)\right)<\nicefrac{\delta_0}{3}.\end{equation}
Then fix $\rho_0\in(0,\nicefrac{1}{4})$ such that
\begin{equation}\label{lsr_21}\overline{c}\left(\ve_0^{-(d-1)}\rho_0^\frac{1}{2_*}\left(2+\E\left[\abs{S}^{q_d}\right]^\frac{2}{q_d}\right)\right)<\nicefrac{\delta_0}{3}.\end{equation}
Finally fix $C_1\in(1,\infty)$ satisfying
\begin{equation}\label{lsr_22}\overline{c}\rho_0^{-2(d+1)}\left(2+\E\left[\abs{S}^{q_d}\right]^\frac{2}{q_d}\right)< \nicefrac{C_1\delta_0}{3}.\end{equation}
The claim \eqref{lsr_19} then follows from Proposition~\ref{prop_hom_energy}, \eqref{lsr_20}, \eqref{lsr_21}, and \eqref{lsr_22}.

We will now prove that there exists $\theta_0\in(0,\nicefrac{1}{4})$ and $C_2\in(1,\infty)$ such that for any $R\in(0,\infty)$ satisfying, for every $r\in[\theta_0R,R]$,
\begin{equation} \label{lsr_103}
\sum_{i=1}^d\frac{1}{r}\left(\fint_{B_r}\abs{\phi_i}^{2_*}\right)^\frac{1}{2_*}+\frac{1}{r}\left(\fint_{B_{r}}\abs{\sigma_i}^2\right)^\frac{1}{2}\leq \nicefrac{1}{C_2}, \end{equation}
and, for every $r\in[\theta_0R,R]$,
\begin{equation}\label{lsr_104}\left(\fint_{B_{r}}\abs{s}^{q_d}\right)^\frac{1}{q_d}\leq\E\left[\abs{S}^{q_d}\right]^\frac{1}{q_d}+1\;\;\textrm{and}\;\;\sum_{i=1}^d\fint_{B_{r}}\abs{\Phi_i+e_i}^2\leq \sum_{i=1}^d\E\left[\abs{\Phi_i+e_i}^2\right]+1,\end{equation}
we have the exact inequality
\[(\theta_0R)^{-2\a}\Exc(u;\theta_0R)\leq R^{-2\a}\Exc(u;R).\]
For each $\ve\in(0,1)$ let $\xi^\ve=\nabla v^{\ve}(0)\in\R^d$ for $v^{\ve}$ defined in \eqref{lsr_18} and observe that
\[u-\xi^\ve-\nabla\phi_{\xi^\ve} = \nabla w^{\ve}+(\nabla v^{\ve}-\nabla v^{\ve}(0))+\nabla \phi_i(\partial_iv^{\ve}-\partial_iv^{\ve}(0))+\phi_i\nabla(\partial_iv^{\ve})\;\;\textrm{on}\;\;B_{\nicefrac{R}{2}},\]
for $w^\ve$ defined in \eqref{lsr_18}.  For every $r\in(0,\nicefrac{R}{4})$ the triangle inequality, Young's inequality, and the mean value theorem prove that
\begin{align*}
& \int_{B_r}\abs{u-\xi^\ve-\nabla\phi_{\xi^\ve}}^2
\\ & \lesssim \int_{B_{\nicefrac{R}{4}}}\abs{\nabla w^{\ve}}^2+r^2\sup_{B_r}\abs{\nabla\partial_i v^{\ve}}^2\int_{B_r}\abs{e_i+\nabla \phi_i}^2+\sup_{B_r}\abs{\nabla \partial_iv^{\ve}}^2\int_{B_r}\abs{\phi_i}^2.
\end{align*}
Proposition~\ref{prop_ue_int} proves that
\[\int_{B_r}\abs{u-\xi^\ve-\nabla\phi_{\xi^\ve}}^2\lesssim  \int_{B_{\nicefrac{R}{4}}}\abs{\nabla w^{\ve}}^2+\left(\frac{r^2}{R^2}\int_{B_r}\abs{e_i+\nabla \phi_i}^2+\frac{1}{R^2}\int_{B_r}\abs{\phi_i}^2\right)\fint_{B_R}\abs{\nabla u}^2.\]
After dividing by $r^d$, for some $\overline{c}\in(0,\infty)$ independent of $\ve$, $r$, and $R$,
\begin{align*}
& \fint_{B_r}\abs{u-\xi^\ve-\nabla\phi_{\xi^\ve}}^2
\\ & \leq\overline{c}\left(\frac{R^d}{r^d}\fint_{B_{\nicefrac{R}{4}}}\abs{\nabla w^{\ve}}^2+ \left(\frac{r^2}{R^2}\left(\fint_{B_r}\abs{e_i+\nabla \phi_i}^2+\frac{1}{R^2}\fint_{B_r}\abs{\phi_i}^2\right)\right)\fint_{B_R}\abs{\nabla u}^2\right).
\end{align*}
Since $\a\in(0,1)$ fix $\theta_0\in(0,\nicefrac{1}{4})$ such that
\begin{equation}\label{lsr_37}\overline{c}\theta_0^2\left(\sum_{i=1}^d\E\left[\abs{\Phi_i+e_i}^2\right]+2\right)\leq \nicefrac{\theta_0^{2\a}}{2},\end{equation}
then fix $\delta_0\in(0,1)$ such that
\begin{equation}\label{lsr_38}\overline{c}\theta_0^{-d}\delta_0\leq \nicefrac{\theta_0^{2\a}}{2},\end{equation}
and let $C_2\in(1,\infty)$ and $\ve_0\in(0,1)$ satisfy the conclusion of \eqref{lsr_19} for this $\delta_0$.  The definition of the excess, $C_2\in(1,\infty)$, \eqref{lsr_19}, \eqref{lsr_103}, and \eqref{lsr_104} then prove after choosing $r=\theta_0R$ that
\begin{align}\label{lsr_36}
\Exc(u;\theta_0 R) & \leq \fint_{B_{\theta_0 R}}\abs{u-\xi^{\ve_0}-\nabla\phi_{\xi^{\ve_0}}}^2
\\ & \leq\overline{c} \left(\delta_0\theta_0^{-d}+\theta_0^2\left(\sum_{i=1}^d\E\left[\abs{\Phi_i+e_i}^2\right]+2\right)\right)\fint_{B_R}\abs{\nabla u}^2.\nonumber
\end{align}
In combination \eqref{lsr_37}, \eqref{lsr_38}, and \eqref{lsr_36} prove that
\begin{equation}\label{lsr_40}\Exc(u;\theta_0R)\leq \theta_0^{2\alpha}\fint_{B_R}\abs{\nabla u}^2.\end{equation}
We now observe that for every $\xi\in\R^d$ the function $u-\xi\cdot x-\phi_\xi\in H^1_{\textrm{loc}}(\R^d)$ solves
\begin{equation}\label{lsr_41}-\nabla\cdot(a+s)\nabla (u-\xi\cdot x-\phi_\xi)=0\;\;\textrm{in}\;\;\R^d,\end{equation}
and by definition of the excess and linearity we have
\[\Exc(u;\theta_0 R)=\Exc(u-\xi\cdot x-\phi_\xi;\theta_0 R)\;\;\textrm{for every}\;\;\xi\in\R^d.\]
Therefore, since $u\in H^1_{\textrm{loc}}(\R^d)$ solving \eqref{lsr_41} was arbitrary, we have from \eqref{lsr_40} and the definition of the excess that
\begin{align}\label{lsr_42}\Exc(u;\theta_0R) & =\inf_{\xi\in\R^d}\Exc(u-\xi\cdot x-\phi_\xi;\theta_0R)
\\ \nonumber & \leq \theta_0^{2\alpha}\inf_{\xi\in\R^d}\left(\fint_{B_R}\abs{\nabla u-\xi-\nabla\phi_\xi}^2\right)=\theta_0^{2\a}\Exc(u;R).\end{align}
We will now use the exact inequality \eqref{lsr_42} to conclude.  For $R_1\leq R_2\in(0,\infty)$ suppose that \eqref{lsr_103} and \eqref{lsr_104} are satisfied for every $r\in[R_1,R_2]$.  We will prove that there exists $c\in(0,\infty)$ depending on $\a$ but independent of $R_1,R_2\in(0,\infty)$ such that
\[\Exc(u;R_1)\leq c\left(\nicefrac{R_1}{R_2}\right)^{2\a}\Exc(u;R_2).\]
For $\theta_0\in(0,\nicefrac{1}{4})$ defined in \eqref{lsr_38}, if $\nicefrac{R_1}{R_2}\geq\theta_0$ then by definition of the excess
\[\Exc(u;R_1)\leq \left(\nicefrac{R_2}{R_1}\right)^d\Exc(u;R_2)\leq\theta_0^{-d}\Exc(u;R_2)\leq \theta_0^{-(d+2\a)}\left(\nicefrac{R_1}{R_2}\right)^{2\a}\Exc(u;R_2).\]
If $\nicefrac{R_1}{R_2}<\theta_0$ let $n\in\N$ be the unique positive integer satisfying $\theta_0^{n}\leq\nicefrac{R_1}{R_2}<\theta_0^{n-1}$ and observe by induction, the previous step, and \eqref{lsr_42} that
\begin{align*}
\Exc(u;R_1)& \leq \theta_0^{-(d+2\a)}\Exc(u;\theta_0^{n-1}R_2) \leq \theta_0^{-(d+2\a)}\theta_0^{2\a}\Exc(u;\theta_0^{n-2}R_2)
\\ & \leq \theta_0^{-(d+2\a)}(\theta_0^{n-1})^{2\a}\Exc(u;R_2)\leq \theta_0^{-(d+4\a)}(\nicefrac{R_1}{R_2})^{2\a}\Exc(u;R_2),
\end{align*}
which completes the proof.
\end{proof}

\begin{thm}\label{thm_excess}  Assume~\eqref{steady}.  On a subset of full probability, for every $\a\in(0,1)$ there exists a deterministic $c\in(0,\infty)$ and a random radius $R_0\in(0,\infty)$ such that, for every weak solution $u\in H^1_{\textrm{loc}}(\R^d)$ of the equation
\[-\nabla\cdot (a+s)\nabla u = 0 \;\;\textrm{in}\;\;\R^d,\]
for every $R_1<R_2\in(R_0,\infty)$,
\[R_1^{-2\a}\Exc(u;R_1)\leq cR_2^{-2\a}\Exc(u;R_2).\]
\end{thm}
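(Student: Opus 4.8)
The plan is to observe that Theorem~\ref{thm_excess} follows by combining the purely deterministic excess-decay estimate of Proposition~\ref{prop_excess} with the almost sure sublinearity of the correctors from Proposition~\ref{prop_sublinear} and the ergodic theorem: it suffices to show that, on a subset of full probability, the hypotheses of Proposition~\ref{prop_excess} hold for every sufficiently large radius.

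First I would assemble the probabilistic input. By Proposition~\ref{prop_corrector} the corrector gradients satisfy $\Phi_i\in L^2_{\textrm{pot}}(\O)$, hence are curl-free and mean zero, and by Proposition~\ref{prop_flux_corrector} the flux corrector gradients satisfy $\Sigma_{ijk}\in L^{p_d}(\O;\R^d)$ with $p_d$ as in \eqref{pd_exponent}, again curl-free and mean zero. To apply Proposition~\ref{prop_sublinear} I would check the exponents: with $p=2$, since $q_d=d\vee(2+\d)\geq d$ one has $\nicefrac{1}{2_*}=\nicefrac{1}{2}-\nicefrac{1}{q_d}\geq\nicefrac{1}{2}-\nicefrac{1}{d}=\nicefrac{1}{p_*}$, so $2_*\in[1,p_*]$ (with $2_*=p_*$ in dimension $d\geq 3$ and $2_*<\infty=p_*$ in dimension $d=2$); and a direct computation from \eqref{pd_exponent} gives $(p_d)_*=2$ for $d\geq 3$ and $(p_d)_*=2+\d>2$ for $d=2$, so $2\in[1,(p_d)_*]$. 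Thus Proposition~\ref{prop_sublinear} applies to each $\phi_i$ with $q=2_*$ and to each $\sigma_{ijk}$ with $q=2$, and the ergodic theorem applies to $\abs{s}^{q_d}$ and to $\abs{\Phi_i+e_i}^2$ since $S\in L^{q_d}(\O;\R^{d\times d})$ and $\Phi_i+e_i\in L^2(\O;\R^d)$. Intersecting the finitely many resulting full-probability events I obtain an event — independent of the parameter $\a$ — on which, for every $i\in\{1,\ldots,d\}$,
\[\lim_{R\rightarrow\infty}\frac{1}{R}\left(\fint_{B_R}\abs{\phi_i}^{2_*}\right)^\frac{1}{2_*}=0,\qquad \lim_{R\rightarrow\infty}\frac{1}{R}\left(\fint_{B_R}\abs{\sigma_i}^2\right)^\frac{1}{2}=0,\]
\[\lim_{R\rightarrow\infty}\fint_{B_R}\abs{s}^{q_d}=\E\left[\abs{S}^{q_d}\right],\qquad \lim_{R\rightarrow\infty}\fint_{B_R}\abs{\Phi_i+e_i}^2=\E\left[\abs{\Phi_i+e_i}^2\right].\]

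On this event I would fix a realization and an arbitrary $\a\in(0,1)$, and let $c_\a,C_\a\in(0,\infty)$ be the constants supplied by Proposition~\ref{prop_excess}. Since the four limits above are finite and none depends on $\a$, there is a random radius $R_0=R_0(\a)\in(0,\infty)$ such that, for every $R\geq R_0$ and every $i\in\{1,\ldots,d\}$, the three conditions
\[\frac{1}{R}\left(\fint_{B_R}\abs{\phi_i}^{2_*}\right)^\frac{1}{2_*}+\frac{1}{R}\left(\fint_{B_R}\abs{\sigma_i}^2\right)^\frac{1}{2}\leq\nicefrac{1}{C_\a},\]
\[\left(\fint_{B_R}\abs{s}^{q_d}\right)^\frac{1}{q_d}\leq\E\left[\abs{S}^{q_d}\right]^\frac{1}{q_d}+1,\qquad \sum_{i=1}^d\fint_{B_R}\abs{\Phi_i+e_i}^2\leq\sum_{i=1}^d\E\left[\abs{\Phi_i+e_i}^2\right]+1\]
all hold. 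For any $R_1<R_2\in(R_0,\infty)$ the whole interval $[R_1,R_2]$ lies in $(R_0,\infty)$, so these are exactly the hypotheses of Proposition~\ref{prop_excess} on $[R_1,R_2]$, and the conclusion of that proposition, $R_1^{-2\a}\Exc(u;R_1)\leq c_\a R_2^{-2\a}\Exc(u;R_2)$ for every weak solution $u\in H^1_{\textrm{loc}}(\R^d)$ of \eqref{ls_1}, is precisely the assertion, with the deterministic constant $c=c_\a$. Since the exceptional null set was chosen before $\a$ and only $R_0$ depends on $\a$, this gives the statement for every $\a\in(0,1)$ on the single event.

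I do not expect a serious obstacle: the argument is an essentially routine extraction of the deterministic Proposition~\ref{prop_excess} from the asymptotics established in Proposition~\ref{prop_sublinear} and the ergodic theorem. The one point deserving care is the bookkeeping of integrability exponents — checking that the $2_*$ attached to $\phi_i$ and the $p_d$ attached to $\sigma_i$ in Sections~\ref{sec_hom_cor} and~\ref{sec_flux_cor} land inside, and for $d\geq 3$ exactly on, the admissible range $[1,p_*]$ of Proposition~\ref{prop_sublinear}, which is why that proposition was stated to include the borderline case $q=p_*$ — together with the small remark that the single null set can be taken uniform in $\a$ because the probabilistic ingredients do not involve $\a$.
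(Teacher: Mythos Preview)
Your proposal is correct and follows exactly the paper's approach: the paper's proof is the single sentence that the result is an immediate consequence of the ergodic theorem, Proposition~\ref{prop_sublinear}, and Proposition~\ref{prop_excess}, and you have spelled out precisely this deduction, including the exponent bookkeeping that justifies invoking Proposition~\ref{prop_sublinear} at the borderline $q=p_*$.
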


\begin{proof}  The proof is an immediate consequence of the ergodic theorem, Proposition~\ref{prop_sublinear}, and Proposition~\ref{prop_excess}.\end{proof}

\section{The Liouville theorem}\label{sec_lvl}  In this section, we will prove the first-order Liouville theorem for subquadratic solutions $u\in H^1_{\textrm{loc}}(\R^d)$ of the equation
\[-\nabla\cdot(a+s)\nabla u = 0\;\;\textrm{in}\;\;\R^d.\]
That is, in analogy with the Liouville theorem for harmonic functions on Euclidean space, the space of subquadratic $(a+s)$-harmonic functions is spanned by the $(a+s)$-harmonic coordinates.  The section is organized as follows.  We prove in Proposition~\ref{prop_lvl_est} below a version of the Caccioppoli inequality adapted to the divergence-free setting.  We then prove the Liouville theorem in Theorem~\ref{thm_lvl} below, which is a consequence of the large-scale regularity estimate of Theorem~\ref{thm_excess} and the Caccioppoli inequality.  These methods are motivated by the analogous results in \cite{BelFehOtt2018,GloNeuOtt2020} from the elliptic setting.

\begin{prop}\label{prop_lvl_est}  Assume \eqref{steady}.  Let $q_d=d\vee(2+\d)$, let $\nicefrac{1}{2_*}=\nicefrac{1}{2}-\nicefrac{1}{q_d}$, and let  $u\in H^1_{\textrm{loc}}(\R^d)$ be a weak solution of
\begin{equation}\label{lvl_1}-\nabla\cdot(a+s)\nabla u=0\;\;\textrm{in}\;\;\R^d.\end{equation}
Then, for every $R\in(0,\infty)$, for some $c\in(0,\infty)$ independent of $R$,
\[\fint_{B_R}\abs{\nabla u}^2\leq \frac{c}{R^2}\left(\fint_{B_{2R}}\abs{u}^2+\left(\fint_{B_{2R}}\abs{s}^{q_d}\right)^{\frac{2}{q_d}}\left(\fint_{B_{2R}}\abs{u}^{2_*}\right)^\frac{2}{2_*}\right).\]
\end{prop}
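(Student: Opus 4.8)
The plan is a Caccioppoli estimate in which the only departure from the classical computation is the treatment of the skew-symmetric part: although $s$ is merely $L^{q_d}$-integrable, the stream-matrix term is controlled using the pointwise identity $s\nabla u\cdot\nabla u=0$ together with H\"older's inequality against the Sobolev exponent $2_*$.

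First I would reduce to $R=1$ by considering the rescaled solution $u_R(x)=u(Rx)$ of $-\nabla\cdot(a_R+s_R)\nabla u_R=0$, where $a_R(x)=a(Rx)$ and $s_R(x)=s(Rx)$ retain the ellipticity constants and the skew-symmetry; since $\fint_{B_1}\abs{\nabla u_R}^2=R^2\fint_{B_R}\abs{\nabla u}^2$ and the averages of $\abs{u}^2$, $\abs{s}^{q_d}$, $\abs{u}^{2_*}$ over $B_2$ equal those over $B_{2R}$, the general case follows from the case $R=1$ after dividing by $R^2$. So fix a cutoff $\eta\in\C^\infty_c(B_2)$ with $\eta=1$ on $B_1$, $0\le\eta\le1$, and $\abs{\nabla\eta}\lesssim1$. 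Because $u\in H^1(B_2)$, the Sobolev embedding gives $u\in L^{2_*}(B_2)$ --- in $d\ge3$ one has $q_d=d$, hence $2_*=2^*=\nicefrac{2d}{d-2}$ and $H^1(B_2)\hookrightarrow L^{2^*}(B_2)$, while in $d=2$ one has $H^1(B_2)\hookrightarrow L^p(B_2)$ for all $p<\infty$ and $2_*<\infty$ --- and by construction $\nicefrac{1}{2_*}+\nicefrac{1}{q_d}+\nicefrac{1}{2}=1$, so $\abs{u}\,\abs{s}\,\abs{\nabla u}\in L^1(B_2)$.

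Testing \eqref{lvl_1} with $\eta^2u$ --- which is legitimate after the usual approximation step of truncating $u$ to $u_n=(u\wedge n)\vee(-n)$ and mollifying, exactly as in the proofs of Proposition~\ref{prop_corrector} and Proposition~\ref{prop_hom_wp}, the point being that the ``diagonal'' contribution $\eta^2 s\nabla u\cdot\nabla u$ vanishes identically by skew-symmetry while the remaining piece $2\eta u\,(s\nabla u)\cdot\nabla\eta$ is integrable by the H\"older triple above --- yields
\[\int_{B_2}a\nabla u\cdot\nabla u\,\eta^2=-\int_{B_2}2\eta u\,a\nabla u\cdot\nabla\eta-\int_{B_2}2\eta u\,(s\nabla u)\cdot\nabla\eta.\]
Then I would use uniform ellipticity, $\int_{B_2}a\nabla u\cdot\nabla u\,\eta^2\ge\lambda\int_{B_2}\eta^2\abs{\nabla u}^2$, and bound the right-hand side by Young's inequality, peeling off a factor $\eta\abs{\nabla u}$ each time: the first term is $\le\tfrac{\lambda}{4}\int_{B_2}\eta^2\abs{\nabla u}^2+c\int_{B_2}\abs{u}^2\abs{\nabla\eta}^2$, and for the stream-matrix term, Young's inequality followed by H\"older with exponents $\bigl(\nicefrac{q_d}{2},(\nicefrac{q_d}{2})'\bigr)$ --- using $2(\nicefrac{q_d}{2})'=2_*$ --- gives
\begin{align*}
\abs{\int_{B_2}2\eta u\,(s\nabla u)\cdot\nabla\eta} & \le \frac{\lambda}{4}\int_{B_2}\eta^2\abs{\nabla u}^2+c\int_{B_2}\abs{u}^2\abs{s}^2\abs{\nabla\eta}^2 \\ & \le \frac{\lambda}{4}\int_{B_2}\eta^2\abs{\nabla u}^2+c\left(\int_{B_2}\abs{s}^{q_d}\right)^{\frac{2}{q_d}}\left(\int_{B_2}\abs{u}^{2_*}\right)^{\frac{2}{2_*}}.
\end{align*}
Absorbing $\tfrac{\lambda}{2}\int_{B_2}\eta^2\abs{\nabla u}^2$ into the left side, using $\eta\equiv1$ on $B_1$ and $\abs{\nabla\eta}\lesssim1$, and replacing integrals by averages (the relation $\nicefrac{2}{q_d}+\nicefrac{2}{2_*}=1$ makes the volume normalisations balance) gives the estimate at $R=1$; scaling then finishes the proof.

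The step I expect to require the most care is, as elsewhere in the paper, simply making the test function legitimate when $s$ is unbounded: one must combine the pointwise vanishing $\eta^2 s\nabla u\cdot\nabla u=0$ with the integrability $u\in L^{2_*}_{\textrm{loc}}(\R^d)$ and the truncation/mollification scheme of Propositions~\ref{prop_corrector} and~\ref{prop_hom_wp}. Once that is done the estimate is a routine Caccioppoli computation, and the exponent $2_*$ with $\nicefrac{1}{2_*}=\nicefrac{1}{2}-\nicefrac{1}{q_d}$ is forced precisely by the requirement that $\abs{u}\,\abs{s}\,\abs{\nabla u}$ be integrable.
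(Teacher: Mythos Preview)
Your proof is correct and follows essentially the same approach as the paper: both test the equation with $\eta^2 u$ (justified via truncation $u_n=(u\wedge n)\vee(-n)$), use the skew-symmetry to kill the diagonal term $\eta^2 s\nabla u\cdot\nabla u$, and then apply uniform ellipticity, Young's inequality, and H\"older with the triple $(2,q_d,2_*)$ to the cross term. Your reduction to $R=1$ by scaling and the explicit Sobolev embedding $H^1(B_2)\hookrightarrow L^{2_*}(B_2)$ are cosmetic additions; the paper works directly with $\eta_R$ but the computation is identical.
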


\begin{proof}  Let $\eta\colon\R^d\rightarrow\R$ be a smooth cutoff function satisfying $\eta=1$ on $\overline{B}_1$, satisfying $\eta=0$ on $\R^d\setminus B_2$, and for each $R\in(0,\infty)$ define $\eta_R(x)=\eta(\nicefrac{x}{R})$.  A repetition of the argument leading to \eqref{hom_9} proves that, after testing \eqref{lvl_1} with $\eta_R^2 u_n$ for $u_n=(u\wedge n)\vee(-n)$ and passing to the limit $n\rightarrow\infty$,
\[\int_{B_R}a\nabla u\cdot\nabla u \eta^2_R = -2\int_{B_R}a\nabla u\cdot\nabla\eta_R u\eta_R -2\int_{B_R}s\nabla u\cdot \nabla\eta_R u\eta_R.\]
It then follows by definition of $\eta_R$, the uniform ellipticity, H\"older's inequality, and Young's inequality that, for some $c\in(0,\infty)$ independent of $R$,
\[\fint_{B_R}\abs{\nabla u}^2\leq \frac{c}{R^2}\left(\fint_{B_{2R}}\abs{u}^2+\left(\fint_{B_{2R}}\abs{s}^{q_d}\right)^{\frac{2}{q_d}}\left(\fint_{B_{2R}}\abs{u}^{2_*}\right)^\frac{2}{2_*}\right).\qedhere\]
\end{proof}

\begin{thm}\label{thm_lvl}  Assume \eqref{steady}.  Let $q_d=d\vee(2+\d)$ and $\nicefrac{1}{2_*}=\nicefrac{1}{2}-\nicefrac{1}{q_d}$.  Then almost surely every weak solution $u\in H^1_{\textrm{loc}}(\R^d)$ of the equation
\begin{equation}\label{lvl_20}-\nabla\cdot(a+s)\nabla u=0\;\;\textrm{in}\;\;\R^d,\end{equation}
that is strictly subquadratic in the sense that, for some $\a\in(0,1)$,
\begin{equation}\label{lvl_021}\lim_{R\rightarrow\infty}\frac{1}{R^{1+\a}}\left(\fint_{B_R}\abs{u}^{2_*}\right)^\frac{1}{2_*}=0,\end{equation}
satisfies $u=c+\xi\cdot x+\phi_\xi$ on $\R^d$ for some $c\in(0,\infty)$ and $\xi\in\R^d$.
\end{thm}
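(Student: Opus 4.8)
The plan is to deduce the statement from the large-scale regularity estimate of Theorem~\ref{thm_excess} by showing that the excess $\Exc(u;R)$ vanishes once $R$ is large, and then to pin down $u$ using the strict sublinearity of the correctors from Proposition~\ref{prop_sublinear}. First I would combine the Caccioppoli-type bound of Proposition~\ref{prop_lvl_est} with the subquadratic hypothesis~\eqref{lvl_021}. Choosing $\xi=0$ in the definition of the excess gives $\Exc(u;R)\leq R^{-2\a}\fint_{B_R}\abs{\nabla u}^2$, and Proposition~\ref{prop_lvl_est}, together with Jensen's inequality (using $2_*>2$, so $\fint_{B_{2R}}\abs{u}^2\leq(\fint_{B_{2R}}\abs{u}^{2_*})^{2/2_*}$) and the ergodic theorem (to bound $(\fint_{B_{2R}}\abs{s}^{q_d})^{2/q_d}$ by $\E[\abs{S}^{q_d}]^{2/q_d}+1$ for $R$ large, using $S\in L^{q_d}(\O;\R^{d\times d})$), yields a deterministic $C\in(0,\infty)$ with
\[
\Exc(u;R)\leq CR^{-2\a-2}\left(\fint_{B_{2R}}\abs{u}^{2_*}\right)^{2/2_*}
\]
for all large $R$. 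By \eqref{lvl_021} the right-hand side is $o(1)$ as $R\to\infty$, so $\Exc(u;R)\to 0$.

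Next I would invoke Theorem~\ref{thm_excess}: on a subset of full probability there is a random $R_0\in(0,\infty)$ and a deterministic $c$ with $R_1^{-2\a}\Exc(u;R_1)\leq cR_2^{-2\a}\Exc(u;R_2)$ for all $R_0<R_1<R_2$. Fixing $R_1$ and letting $R_2\to\infty$ forces $\Exc(u;R_1)=0$. Since the map $\xi\mapsto\fint_{B_R}\abs{\nabla u-\xi-\nabla\phi_\xi}^2$ is a nonnegative quadratic on the finite-dimensional space $\R^d$, its infimum is attained; hence for every $R>R_0$ there is $\xi_R\in\R^d$ with $\nabla u=\xi_R+\nabla\phi_{\xi_R}$ a.e.\ on $B_R$, where $\phi_{\xi_R}=(\xi_R)_i\phi_i$ for the correctors of~\eqref{hom_03}.

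The heart of the matter, and the step I expect to be the main obstacle, is to show that $\xi_R$ is eventually independent of $R$. I would work along the dyadic radii $R_k=2^kR_0$. On $B_{R_k}\subset B_{R_{k+1}}$ both representations of $\nabla u$ hold, so, writing $\eta_k:=\xi_{R_k}-\xi_{R_{k+1}}$, linearity gives $\nabla\phi_{\eta_k}=-\eta_k$ a.e.\ on $B_{R_k}$; thus $\phi_{\eta_k}$ is affine with slope $-\eta_k$ on the connected set $B_{R_k}$, and a direct computation (the linear cross-term vanishing by symmetry of $B_{R_k}$) gives $\fint_{B_{R_k}}\abs{\phi_{\eta_k}}^2\geq c_d\abs{\eta_k}^2R_k^2$ for a positive dimensional constant $c_d$. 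On the other hand, since $\phi_{\eta_k}=(\eta_k)_i\phi_i$, the Cauchy--Schwarz inequality and Proposition~\ref{prop_sublinear} applied to each $\phi_i$ (with $p=q=2$, admissible since $2\in[1,p_*]$ when $d\geq3$ and $2<\infty$ when $d=2$) give $R_k^{-1}(\fint_{B_{R_k}}\abs{\phi_{\eta_k}}^2)^{1/2}\leq\abs{\eta_k}\sum_{i=1}^dR_k^{-1}(\fint_{B_{R_k}}\abs{\phi_i}^2)^{1/2}=\abs{\eta_k}\,o(1)$ as $k\to\infty$. Comparing the two bounds forces $\eta_k=0$ for all large $k$, so $\xi_{R_k}$ equals a common value $\xi\in\R^d$ for $k$ beyond some $k_0$. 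Then $\nabla u=\xi+\nabla\phi_\xi$ a.e.\ on each $B_{R_k}$ with $k\geq k_0$, hence a.e.\ on $\R^d$, and since $\R^d$ is connected $u-\xi\cdot x-\phi_\xi$ is a.e.\ equal to a constant $c$, giving $u=c+\xi\cdot x+\phi_\xi$. The entire argument is carried out on the intersection of the full-probability sets provided by Theorem~\ref{thm_excess}, Proposition~\ref{prop_sublinear}, and the ergodic theorem.
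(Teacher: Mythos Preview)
Your proof is correct and follows the same overall strategy as the paper: combine the Caccioppoli inequality of Proposition~\ref{prop_lvl_est} with the subquadratic hypothesis to make the excess vanish at infinity, feed this into the excess decay of Theorem~\ref{thm_excess} to get $\Exc(u;R_1)=0$ for all $R_1>R_0$, deduce that the infimum in the excess is attained, and finally argue that the minimizer $\xi_R$ is independent of $R$.

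The only substantive difference is in the last step. The paper establishes the coercivity bound $\fint_{B_R}\abs{\xi+\nabla\phi_\xi}^2\geq\tfrac{1}{2}\abs{\xi}^2$ for all $R>R_0$ directly from the ergodic theorem and $\E[\Phi_i]=0$; this both guarantees that the infimum is a minimum and, applied to $\xi_{R_1}-\xi_{R_2}$, immediately forces $\xi_{R_1}=\xi_{R_2}$ for every pair $R_1\leq R_2$ in $(R_0,\infty)$. You instead obtain the minimizer from the abstract fact that a nonnegative quadratic on $\R^d$ attains its infimum, and for uniqueness you pass to the correctors themselves, using Proposition~\ref{prop_sublinear} to contradict the lower bound $\fint_{B_{R_k}}\abs{\phi_{\eta_k}}^2\geq c_d\abs{\eta_k}^2R_k^2$ coming from the affine identification on $B_{R_k}$. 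Both routes encode the same information (the corrector is sublinear relative to the linear term); the paper's gradient-level coercivity is slightly more direct and yields uniqueness at all scales above $R_0$ simultaneously, while your argument works along dyadic scales for large $k$, which is enough.
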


\begin{proof}  By the ergodic theorem and $\E[\Phi_i]=0$ for each $i\in\{1,\ldots,d\}$ let $\O_1\subseteq\O$ be the subset of full probability satisfying, for every $\xi\in\R^d$,
\begin{equation}\label{lvl_19}\lim_{R\rightarrow\infty}\fint_{B_R}\abs{\nabla\phi_\xi+\xi}^2=\E\left[\abs{\Phi_\xi}^2\right]+\abs{\xi}^2\geq \abs{\xi}^2\;\;\textrm{and}\;\;\lim_{R\rightarrow\infty}\fint_{B_R}\abs{s}^{q_d}=\E\left[\abs{S}^{q_d}\right].\end{equation}
Let $\O_2\subseteq\O$ be the subset of full probability satisfying the conclusion Theorem~\ref{thm_excess}, and let $\O_3=\O_1\cap\O_2$.  For $\o\in\O_3$ let $R_0\in(0,\infty)$ be such that every weak solution $u\in H^1_{\textrm{loc}}(\R^d)$ of \eqref{lvl_20} satisfies, for every $R_1<R_2\in(R_0,\infty)$, for a deterministic $c\in(0,\infty)$ depending on $\a$,
\begin{equation}\label{lvl_21}R_1^{-2\a}\Exc(u;R_1)\leq c R_2^{-2\a}\Exc(u;R_2),\end{equation}
and such that, for every $R\in(R_0,\infty)$ and $\xi\in\R^d$,
\begin{equation}\label{level_22}\fint_{B_R}\abs{\nabla\phi_\xi+\xi}^2\geq\nicefrac{\abs{\xi}^2}{2}.\end{equation}
The definition of the excess, $u\in H^1_{\textrm{loc}}(\R^d)$, and \eqref{level_22} prove that, for every $R\in(R_0,\infty)$,
\begin{equation}\label{lvl_23}\Exc(u;R)=\inf_{\xi\in\R^d}\left(R^{-2\a}\fint_{B_R}{\nabla u-\xi-\nabla\phi_\xi}^2\right)=\min_{\xi\in\R^d}\left(R^{-2\a}\fint_{B_R}{\nabla u-\xi-\nabla\phi_\xi}^2\right).\end{equation}
Fix $R_1\in(R_0,\infty)$.  We have by definition of the excess, Proposition~\ref{prop_lvl_est}, and \eqref{lvl_21} that, for every $R\in(R_1,\infty)$,
\begin{align}
R_1^{-2\a}\Exc(u;R_1) & \leq c  R^{-2\a}\fint_{B_R}\abs{\nabla u}^2
\\ \nonumber & \leq c R^{-2(1+\a)}\left(\fint_{B_{2R}}\abs{u}^2+\left(\fint_{B_{2R}}\abs{s}^{q_d}\right)^{\frac{2}{q_d}}\left(\fint_{B_{2R}}\abs{u}^{2_*}\right)^\frac{2}{2_*}\right).
\end{align}
H\"older's inequality, the ergodic theorem, $2_*\in(2,\infty)$, \eqref{lvl_021}, and \eqref{lvl_19} prove almost surely that
\begin{align*}
& R_1^{-2\a}\Exc(u;R_1)
\\ & \leq c \limsup_{R\rightarrow\infty}R^{-2(1+\a)}\left(\fint_{B_{2R}}\abs{u}^2+\left(\fint_{B_{2R}}\abs{s}^{q_d}\right)^{\frac{2}{q_d}}\left(\fint_{B_{2R}}\abs{u}^{2_*}\right)^\frac{2}{2_*}\right)=0.
\end{align*}
It then follows from \eqref{lvl_23} that there exists $\xi_{R_1}\in\R^d$ such that
\[\nabla u-\xi_{R_1}-\nabla\phi_{\xi_{R_1}}=0\;\;\textrm{in}\;\;L^2(B_{R_1};\R^d),\]
and therefore there exists $c_{R_1}\in\R$ such that
\[u=c_{R_1}+\xi_{R_1}\cdot x+\phi_{\xi_{R_1}}\;\;\textrm{in}\;\;H^1(B_{R_1}).\]
Since the linearity and \eqref{level_22} prove that $c_{R_1}=c_{R_2}$ and $\xi_{R_1}=\xi_{R_2}$ whenever $R_1\leq R_2\in(R_0,\infty)$, there exists $\xi\in\R^d$ and $c\in\R$ such that
\[u=c+\xi\cdot x+\phi_\xi\;\;\textrm{in}\;\;H^1_{\textrm{loc}}(\R^d).\qedhere\]
\end{proof}

%%%%%%%%%%%%%%%%%%%%%%%%%%%%%%%%%%%%%%%%%%%%%%%%%%%%%%%%%%%%%
%%                  The Bibliography                       %%
%%                                                         %%
%%  imsart-???.bst  will be used to                        %%
%%  create a .BBL file for submission.                     %%
%%                                                         %%
%%  Note that the displayed Bibliography will not          %%
%%  necessarily be rendered by Latex exactly as specified  %%
%%  in the online Instructions for Authors.                %%
%%                                                         %%
%%  MR numbers will be added by VTeX.                      %%
%%                                                         %%
%%  Use \cite{...} to cite references in text.             %%
%%                                                         %%
%%%%%%%%%%%%%%%%%%%%%%%%%%%%%%%%%%%%%%%%%%%%%%%%%%%%%%%%%%%%%

%% if your bibliography is in bibtex format, uncomment commands:
\bibliographystyle{plain} % Style BST file (imsart-number.bst or imsart-nameyear.bst)
\bibliography{div_free}       % Bibliography file (usually '*.bib')

\end{document}